\documentclass[12pt]{amsart}

\textwidth=5.5in \textheight=8.5in
\usepackage{latexsym, amssymb, amsmath,ulem,soul,esint}

\usepackage{amsfonts, graphicx}
\usepackage{graphicx,color}
\newcommand{\be}{\begin{equation}}
\newcommand{\ee}{\end{equation}}
\newcommand{\beq}{\begin{eqnarray}}
\newcommand{\eeq}{\end{eqnarray}}

\newtheorem{thm}{Theorem}[section]

\newtheorem{ass}{Assumption}[section]
\newtheorem{lma}{Lemma}[section]
\newtheorem{prop}{Proposition}[section]
\newtheorem{cor}{Corollary}[section]

\theoremstyle{remark}
\newtheorem{rem}{Remark}[section]
\numberwithin{equation}{section}

\def\be{\begin{equation}}
\def\ee{\end{equation}}
\def\bee{\begin{equation*}}
\def\eee{\end{equation*}}
\def\ol{\overline}
\def\lf{\left}
\def\ri{\right}

\def\by{\mathbf{y}}

\def\wn{\wt\nabla}
\def\cR{\mathcal{S}}

\def\Ric{\text{\rm Ric}}

\def\cS{\mathcal{S}}

\def\H{\mathbb{H}}

\def\wt{\widetilde}
\def\la{\langle}
\def\ra{\rangle}
\def\p{\partial}

\def\ol{\overline}

\def\e{\varepsilon}
\def\a{{\alpha}}
\def\b{{\beta}}

\def\R{\mathbb{R}}

\def\mS{\mathbb{S}}

\def\ve{\varepsilon}

\begin{document}

\title[]
{Boundary behaviors of spacelike constant mean curvature surfaces in Schwarzschild spacetime}

\author{Caiyan Li$^1$}
 \address[Caiyan Li]{Key Laboratory of Pure and Applied Mathematics, School of Mathematical Sciences, Peking University, Beijing, 100871, P.\ R.\ China}
\email{lcy@math.pku.edu.cn}
\thanks{$^1$Research partially supported by China Postdoctoral Science Found 8206400077}
 \author{Yuguang Shi$^2$}
 \address [Yuguang Shi] {Key Laboratory of Pure and Applied Mathematics, School of Mathematical Sciences, Peking University, Beijing, 100871, P.\ R.\ China}
\email{ygshi@math.pku.edu.cn}
\thanks{$^2$Research partially supported by National Key R\&D Program of China SQ2020YFA070059 and NSFC 11731001.}

\author{Luen-Fai Tam$^3$}
\address[Luen-Fai Tam]{The Institute of Mathematical Sciences and Department of Mathematics, The Chinese University of Hong Kong, Shatin, Hong Kong, China.}
 \email{lftam@math.cuhk.edu.hk}
\thanks{$^3$Research partially supported by Hong Kong RGC General Research Fund \#CUHK 14301517}

\renewcommand{\subjclassname}{
  \textup{2010} Mathematics Subject Classification}
\subjclass[2010]{Primary 53C44, Secondary 83C30}

\date{\today}

\begin{abstract}
We prove that a spacelike spherical symmetric constant mean curvature (SSCMC) surface and a general spacelike constant mean curvature (CMC) surface with certain boundary condition at the future null-infinity  in Schwarzschild spacetime are asymptotically hyperbolic in the sense of Wang \cite{Wang2001} and Chru\'sciel-Herzlich \cite{ChruscielHerzlich} respectively. Near the future null-infinity ($s=0$), we derive that the boundary data of spacelike CMC surfaces can be expressed as  those on $\mathbb{S}^{2}$ up to three order and obtain a compatibility condition for fourth order derivatives near $s=0$. We also show that if the trace free part of the second fundamental forms $\mathring A$ of this spacelike CMC surface decay fast enough then the restriction of its associate function $P$ (for definition, see \eqref{defofp} ) on the null-infinity must be a first eigenfunction of the Laplace on $\mathbb{S}^2$ or constant.  In particular in Minkowski spacetime, a uniqueness result and constructions of spacelike CMC surfaces near $s=0$ are proved. Also, we show that the inner boundary of certain spacelike CMC surfaces are totally geodesic.
\end{abstract}

\keywords{Schwarzchild spacetime; spacelike constant mean curvature surface; boundary behaviors; null-infinity; black hole; asymptotically hyperbolic;  compatibility conditions}

\maketitle
\markboth{Caiyan Li, Yuguang Shi and Luen-Fai Tam }{Boundary behaviors of spacelike constant mean curvature surfaces}
\section{Introduction}\label{s-intro}

In this work, we want to study the boundary behaviors of spacelike constant mean curvature (CMC) surfaces in the exterior region of Schwarzschild spacetime. We will focus on the behaviors of the surfaces near the future null-infinity and near the black hole region.  Corresponding results should be true for the past null-infinity.

In \cite{LL,LL2}, K-W Lee and Y-I Lee gave a complete description of spacelike spherical symmetric constant mean curvature (SSCMC) surfaces in the Kruskal extension of Schwarzschild spacetime. Some behaviors near null-infinity and the black hole region are also studied. We want to discuss more general spacelike CMC surfaces. In order to study the behaviors near  the future null-infinity $\mathcal{I}^+$, it is more convenient to use
retarded null coordinate $v$ together with $s=r^{-1}$ and $y^1,y^2$ which are coordinates of the standard unit sphere $\mS^2$. Consider a spacelike CMC surface $\Sigma$ with constant mean curvature $H_0>0$ defined on $\mathcal{I}^+$ which is a graph of a function $P=P(y^1,y^2,s)$. {\it We always assume that $P$ is smooth on $s_0\ge s>0$ for some $s_0>0$.} We have the following:
\begin{thm}\label{t-intro-1} Suppose $P$ is at least $C^4$ up to $s=0$. Let $f(y^1,y^2)=P(y^1,y^2,0)$ be the boundary value of $P$ at $\mathcal{I}^+$. We have the following, assuming $H_0=1$:
 \begin{enumerate}
   \item [(i)] $P_s, P_{ss}, P_{sss}$ can be expressed in terms of $f$ and its derivatives as a function in $\mS^2$.
   \item [(ii)] In general $P_{ssss}$ cannot be determined by $f$.
   \item [(iii)] $f$ must satisfy a compatibility condition in order that $P$ is $C^4$ smooth.
       \item[(iv)] If $P$ is smooth up to $s=0$, the all derivatives of $P$ at $s=0$ are determined by $f$ and $P_{ssss}|_{s=0}$.
 \end{enumerate}
 \end{thm}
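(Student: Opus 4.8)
The plan is to regard the constant mean curvature condition $H=H_0=1$ as a singular second-order quasilinear equation for $P(y^1,y^2,s)$ and to analyze it through a formal Taylor expansion in $s$ about the null-infinity $s=0$. First I would write the CMC equation explicitly in the coordinates $(y^1,y^2,s)$, clear the denominators coming from the (normalized) induced metric and the normal, and organize the result as
\[ \mathcal{F}\big(s, P, \partial_y P, \partial_y^2 P, \partial_s P, \partial_s^2 P, \partial_s\partial_y P\big) = 0, \]
where $\partial_y$ denotes derivatives in the $\mathbb{S}^2$ directions. The structural fact I would extract — already implicit in the computations behind parts (i)--(iii) — is that upon differentiating the cleared equation the appropriate number of times in $s$ and evaluating at $s=0$, the $k$-th Taylor coefficient enters linearly: one obtains a relation of the form $c_k\,\partial_s^k P|_{s=0}=R_k$, in which $c_k$ is a universal constant (independent of $f$) and the remainder $R_k$ depends only on $f$ and the lower coefficients $\partial_s^j P|_{s=0}$ with $j<k$.

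The heart of the matter is then an indicial-root/resonance computation for the constants $c_k$. I would compute $c_k$ from the leading $s$-behavior of $\mathcal{F}$ and show that $c_k\neq 0$ for every $k\ge 1$ \emph{except} $k=4$. The vanishing $c_4=0$ is precisely the resonance producing phenomena (ii) and (iii): at fourth order the top coefficient drops out, leaving $P_{ssss}|_{s=0}$ undetermined (part (ii)) while the surviving requirement $R_4=0$ becomes, after substituting the already-known $P_s,P_{ss},P_{sss}$ from part (i), a condition purely on $f$ — the compatibility condition of part (iii). For $k=1,2,3$ one has $c_k\neq 0$, recovering part (i). The main obstacle I anticipate is exactly the statement that there are \emph{no further} resonances, i.e. $c_k\neq 0$ for all $k\ge 5$; this reduces to verifying that a single explicit polynomial (the indicial polynomial of the linearization of $\mathcal{F}$ at $s=0$) has its only nonnegative integer root at $4$, which I expect to establish by an explicit factorization once the degenerate leading behavior in $s$ is pinned down. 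Note that (iv) is logically equivalent to this no-extra-resonance claim, since any additional resonance at order $k>4$ would force additional free data beyond $f$ and $P_{ssss}|_{s=0}$.

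Granting the resonance analysis, part (iv) follows by a straightforward induction on $k$. The hypothesis that $P$ is smooth up to $s=0$ guarantees that each $\partial_s^k P|_{s=0}$ exists as a smooth function on $\mathbb{S}^2$, so differentiating the cleared CMC equation in $s$ and evaluating at $s=0$ is legitimate and yields $c_k\,\partial_s^k P|_{s=0}=R_k\big(f,P_s,\dots,\partial_s^{k-1}P|_{s=0}\big)$ for each $k$. The base data are furnished by part (i), which gives $P_s,P_{ss},P_{sss}$ in terms of $f$, together with the freely prescribed value $P_{ssss}|_{s=0}$. Since $c_k\neq 0$ for all $k\ge 5$, each such relation determines $\partial_s^k P|_{s=0}$ uniquely from the lower coefficients already known by the inductive hypothesis. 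Hence $f$ together with $P_{ssss}|_{s=0}$ determines the entire Taylor expansion of $P$ at $s=0$, and therefore all derivatives of $P$ there, as claimed.
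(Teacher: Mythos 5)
Your overall skeleton is in fact the paper's own: after squaring the cleared CMC equation \eqref{e-CMC-null-1} (giving \eqref{e-CMC-null-2}) and differentiating $j$ times at $s=0$, the paper obtains exactly the linear-in-top-coefficient recursion you predict, in the form $(j-3)\,\p_s^jL=Q^{(j)}+R^{(j)}$, with $\p_s^jL=-2\p_s^{j+1}P+Q^{(j)}$ by \eqref{e-L-11}; so the indicial polynomial is linear in $j$ with its unique root at $j=3$, i.e.\ at $P_{ssss}$, and the no-extra-resonance claim and the induction for (iv) go through exactly as you expect (Theorem \ref{t-smooth-future}(d)). However, there are three genuine gaps in your plan.

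First, your structural assertion that ``the $k$-th Taylor coefficient enters linearly'' already fails at order zero: setting $s=0$ in the cleared equation yields $9H_0^2L^3=9L^2$, a cubic with the two roots $L=0$ and $L=H_0^{-2}$, and only after selecting the branch $L=H_0^{-2}$ does $P_s=-\frac12\bigl(H_0^{-2}+|\wn f|^2\bigr)$ follow and the linear recursion start. Ruling out $L=0$ is not a formal matter: the paper proves $L\equiv H_0^{-2}$ at $s=0$ (Proposition \ref{p-L=1}) by a barrier/comparison argument against explicit SSCMC surfaces of larger mean curvature, applied at a maximum point of $P$ on $\mS^2\times\{0\}$; nothing in your formal scheme supplies this. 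Second, for (ii) the vanishing $c_4=0$ only shows that the \emph{recursion} does not determine $P_{ssss}$; to prove that $P_{ssss}$ ``cannot be determined by $f$'' one must exhibit actual CMC surfaces with the same boundary value $f$ but different fourth derivatives. The paper does this with the SSCMC family (Proposition \ref{p-SS-P}): there $P_s,P_{ss},P_{sss}$ at $s=0$ depend only on $H,m$, while $P_{ssss}=H^{-4}\bigl(6cH-\frac92m\bigr)$ varies with the free parameter $c$. Third, under the stated hypothesis that $P$ is merely $C^4$ up to $s=0$, your derivation of the compatibility condition (iii) by ``differentiating the appropriate number of times and evaluating at $s=0$'' is illegitimate: the third $s$-derivative of the equation contains the terms grouped as $\mathrm{I}$ in the paper, e.g.\ $sL\,s^2(1-2ms)P_{sssss}$ and $sL_{ssss}$, which involve fifth-order derivatives over which $C^4$ regularity gives no control as $s\to0$; the paper must show $\mathrm{I}(\mathbf{y},s_i)\to0$ along a sequence $s_i\to0$ via a mean-value-theorem trick (the Claim in the proof of Theorem \ref{t-smooth-future}(c)). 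Your formal evaluation implicitly assumes $C^5$. With these three repairs --- branch selection by barriers, the SSCMC examples for (ii), and the subsequence argument for (iii) --- your plan coincides with the paper's proof.
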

 The results are true in more general setting that the mean curvature may not be constant. See Theorem \ref{t-smooth-future} for more details. For example, the results in (i)--(iii) above are still true for general spacetime which are close to Schwarzschild spacetime near $\mathcal{I}^+$ in some sense. For a SSCMC surface, one can show that $P$ is constant and is smooth at $s=0$. Hence the constant function will satisfy the compatibility condition mentioned above. In fact, if $f$ is a linear combination of the first four eigenfunctions of the Laplacian of $\mS^2$, then it also satisfies the compatibility condition. These particular boundary values turn  out to be related to the decay rate of the traceless part of the second fundamental form.  Namely, we have the following (assuming $P$ is smooth for simplicity):

 \begin{thm}\label{t-mathringA} Let $\mathring A$ be the traceless part of the spacelike surface $\Sigma$ with induced metric $G$. Then  $|\mathring A|_G=O(s)$.

\begin{enumerate}
  \item[(a)] Moreover, the following statements are equivalent:
  \begin{enumerate}
    \item[(i)] $|\mathring A|_G =O(s^{2})$.
    \item[(ii)] $P$ is the linear combination of the first four eigenfunctions of the Laplacian of $\mS^2$ with respect to the standard metric.
    \item [(iii)] $|\mathring A|_G =O(s^{3})$.
  \end{enumerate}
    They are equivalent to the fact that $|\cS+6|=O(s^6)$, where $\cS$ is the scalar curvature of the surface.
  \item [(b)]
    Furthermore, if $m>0$ then the following two statements are equivalent:
  \begin{enumerate}
    \item[(i)] $|\mathring A|_G =O(s^{4})$.
    \item[(ii)] $P$ is constant and $P_{ssss}=-\frac{9m}2$ at $s=0$.
  \end{enumerate}
\end{enumerate}
\end{thm}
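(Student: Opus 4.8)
The plan is to write the spacelike hypersurface $\Sigma$ as the graph $\{P(y^1,y^2,s)\}$ in the coordinates $(y^1,y^2,s)$, compute its trace-free second fundamental form $\mathring A$ explicitly from the Schwarzschild metric and the CMC equation, and expand everything as a power series in $s$ at the null-infinity $s=0$. Writing $\mathring A = s\,B_1 + s^2 B_2 + s^3 B_3 + O(s^4)$, each $B_k$ is a $G$-trace-free symmetric $2$-tensor on $\mS^2$ built from $f$ and its covariant derivatives, with the mass $m$ and the free datum $P_{ssss}|_{s=0}$ first entering at order $B_3$. Because $P_s,P_{ss},P_{sss}$ are already determined by $f$ through Theorem \ref{t-intro-1}, the coefficients $B_1,B_2$ (and the $f$-part of $B_3$) are \emph{universal} differential expressions in $f$. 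The whole statement then reduces to identifying $B_1,B_2,B_3$ and deciding when they vanish; in particular the unconditional bound $|\mathring A|_G=O(s)$ is just the absence of an $s^0$ term, i.e.\ the asymptotic umbilicity of $\Sigma$, which falls out of the leading-order computation.

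For part (a), I would first show that $B_1$ equals a nonzero multiple of the trace-free Hessian $\mathring{\nabla}^2 f$ of $f$ on the round $\mS^2$. Granting this, $|\mathring A|_G=O(s^2)$ is equivalent to $\mathring{\nabla}^2 f\equiv 0$, and by the classical consequence of Obata's theorem that the functions on $\mS^2$ with vanishing trace-free Hessian are exactly the restrictions of affine functions $a_0+a_1X^1+a_2X^2+a_3X^3$ of $\R^3$, this is in turn equivalent to $f$ being a combination of the first four eigenfunctions; this gives (a)(i)$\Leftrightarrow$(a)(ii). The one genuinely nontrivial implication is (a)(i)$\Rightarrow$(a)(iii): since (iii)$\Rightarrow$(i) is trivial, it suffices to show $B_1=0\Rightarrow B_2=0$. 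I expect $B_2$ to factor through $\mathring{\nabla}^2 f$ (a differential expression in the trace-free Hessian), so that it vanishes automatically once $\mathring{\nabla}^2 f=0$; failing a clean structural argument, one can verify $B_2=0$ directly on the $4$-dimensional space of affine $f$.

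The scalar-curvature clause is the cleanest step. Schwarzschild is Ricci-flat, so in the Gauss equation for the spacelike hypersurface $\Sigma$ both the ambient scalar curvature and $\bar{\Ric}(n,n)$ vanish after the full trace (the Weyl tensor does not survive the contraction); with the CMC normalization making $G$ asymptotically hyperbolic of curvature $-1$ (so that $\mathrm{tr}_G A=3$) the equation collapses to the \emph{exact} identity $\cS+6=|\mathring A|_G^2$. Hence $|\cS+6|=O(s^6)$ is literally the same as $|\mathring A|_G=O(s^3)$, which is (a)(iii), closing the equivalences in (a).

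Part (b) is the hardest and is where the mass does the work. Having reduced to affine $f$ via part (a), I would push the expansion to order $s^3$ and compute $B_3$, in which the mass enters through the $O(ms^3)$ curvature of Schwarzschild and $P_{ssss}|_{s=0}$ appears as the only new datum. The condition $|\mathring A|_G=O(s^4)$ is $B_3=0$, a trace-free tensor equation for the single scalar $P_{ssss}|_{s=0}$ together with the four affine parameters of $f$; decomposing it into spherical-harmonic modes, the higher modes are overdetermined and, when $m>0$, can be satisfied only if the three linear coefficients $a_1,a_2,a_3$ all vanish, forcing $f$ to be constant, after which the remaining (constant-mode) equation pins down $P_{ssss}|_{s=0}=-\tfrac{9m}2$. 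Conversely, substituting $f\equiv\text{const}$ and this value makes $B_1=B_2=B_3=0$, so (b)(i)$\Leftrightarrow$(b)(ii). The main obstacle throughout is purely computational: carrying the $s$-expansion of $\mathring A$ accurately to order $s^3$ and cleanly separating, at that order, the $m$-dependent contribution from the $f$-dependent one; I expect everything else to be structural.
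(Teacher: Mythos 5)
Your proposal is correct, and its skeleton coincides with the paper's: expand $\mathring A$ in powers of $s$ using the jets $P_s,P_{ss},P_{sss}$ determined by the boundary value, identify the leading coefficient with the trace-free Hessian to get (a)(i)$\Leftrightarrow$(a)(ii), use the doubly-traced Gauss equation in the Ricci-flat ambient to get the exact identity $\cS+6=|\mathring A|_G^2$ (Lemma \ref{l-scalar}, which is precisely your ``cleanest step''), and settle (b) at order $s^3$, where $m$ and $P_{ssss}|_{s=0}$ first enter. The genuine difference is how the two heavy computations are executed. For (a)(ii)$\Rightarrow$(a)(iii), where you expect $B_2$ to factor through the trace-free Hessian and otherwise would verify $B_2=0$ on affine $f$, the paper does carry out that direct verification, but it also gives a second, slicker argument that you may want to adopt: compare $\Sigma$ with the exact Minkowski hyperboloid $u^0=(1+|\mathbf{x}+\mathbf{a}|^2)^{1/2}$, whose $P^0$ has the same boundary value; by Theorem \ref{t-smooth-future} the jets agree through order $3$, so $P-P^0=O(s^4)$, and since $\mathring A^0\equiv 0$ the vanishing of $B_2$ is inherited with no computation. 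The same subtraction organizes part (b): writing $b=P_{ssss}(0)$, $c=P^0_{ssss}(0)$, the only surviving $s^3$-terms in $s(\ol A_{ij}-\ol A^0_{ij})+(sB-sB^0)\ol g_{ij}$ are $3m(P_\a P_\b)(0)$ plus the pure-trace term $\lf(-2mP_s(0)+\frac16(b-c)-mP_s^2(0)\ri)\delta_{\a\b}$; the off-diagonal piece $m\,P_\a P_\b$ then forces $\mathbf{a}=0$ by rotating the frame so that $P_1P_2\neq0$ whenever $\wn P\neq 0$ (this is the concrete form of your ``overdetermined higher modes''), after which the trace part pins down $b=-\frac{9m}{2}$; your bare-hands expansion of $B_3$ would reach the same place but is substantially heavier. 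Three smaller points: the paper proves the eigenfunction characterization by a Bochner-formula and eigenfunction-expansion argument rather than citing the Obata-type classification (equivalent content); your $B_1$ is the trace-free Hessian only in its tangential block --- the $3\a$ and $33$ components are contractions of the trace-free Hessian with $\wn f$ via the CMC relations of Corollary \ref{c-smooth-null} --- which is harmless for (i)$\Rightarrow$(ii) (only the tangential block is needed) but means the converse requires checking all components, as the paper does; and the dictionary between $|\mathring A|_G=O(s^n)$ (norm in the physical metric $G=s^{-2}\ol g$) and componentwise estimates in the coordinate frame is not automatic --- it is supplied by Lemma \ref{l-decay} through the conformal relation $A_{ij}=s^{-1}(\ol A_{ij}+B\ol g_{ij})$, a step your outline implicitly assumes.
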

We should remark that (b) is not true in case $m=0$ by considering the graph of $u(\mathbf{x})=(1+|\mathbf{x}+\mathbf{a}|^2)^\frac12$ in the Minkowski spacetime.

 As an application, one can prove that a spacelike CMC surface in the Minkowski spacetime which is the entire graph of a function $u$ so that it is smooth at $\mathcal{I}^+$ and with scalar curvature decays like (iv) in the above, then the graph is a hyperboloid by some translation in the space. Given any smooth function $f$ on $\mS^2$, one can also use the above results to construct a spacelike surface near null-infinity with boundary value $f$ and with constant mean curvature up to certain order.  We also prove that if $P$ is as in (ii) above, then the surface is asymptotically hyperbolic in the sense of Chru\'sciel-Herzlich \cite{ChruscielHerzlich}. Similar results should be  true for spacetime which is close to the Schwarzschild. We plan to investigate this in the future.

 We also investigate the behavior of a spacelike CMC surface $\Sigma$ which is smooth near  $T=X=0$   in the Kruskal extension of Schwarzschild spacetime. For simplicity, we adopt the coordinates $(\eta, y^1,y^2)$ (for its definition, see  \S\ref{ss-equation-2m}) to study this case. By Proposition \ref{smoothnessinnerbdry}, we  know that the $t$-function $u$ of $\Sigma$ is actually smooth near $r=2m$. More interestingly, we observe that the derivatives of $u$ with respect to $\eta$ can be expressed in terms of those of function $u$ restricted on the inner boundary $r=2m$ (see Theorem \ref{t-blackhole} for more details). We also show the inner boundary of $\Sigma$ being totally geodesic, that may have some potential application in study of Penrose inequality for $\Sigma$.

 This paper is organized as follows: in Section 2, we first recall some basic facts on Kruskal extension and conformal compactification of Schwarzschild spacetime, and derive the mean curvature equations under various coordinates; in Section 3, we study the behaviors of the spacelike CMC surfaces near the future null-infinity; in Section 4, we explore behaviors of the spacelike CMC surfaces near the inner boundary $r=2m$.
\vskip .1cm

  {\bf Acknowledgement}:This work is motivated by some questions of Prof. Shing-Tung Yau on constant mean curvature surfaces in the Schwarzschild spacetime.The second author would like to thank Prof.Yau for drawing his attention to this problem. The third author would like to thank Profs. Andrejs Treibergs and Jiaping Wang for some useful discussion.

 \section{Preliminary}\label{s-prelim}

 In this section, we will set up the background for discussion. Namely, in order to study the boundary behavior of a spacelike CMC surface, we will use the Kruskal extension of Schwarzschild spacetime. In order to study the behavior of the surface near null-infinity, we will use the standard conformal compactification. We will only consider the behavior near the future null-infinity. The other case is similar.  To do the analysis, we have to write down the mean curvature equation of the function whose graph is a spacelike CMC surface.

Let us first fix some notations. $\mS^2$ will always denote the standard unit sphere in $\R^3$, that is
$$
\mS^2=\{\mathbf{x}\in \R^3: |\mathbf{x}|=1\}.
$$
The standard metric is denoted by $\sigma=\sigma_{AB}dy^Ady^B$ in local coordinates $y^1, y^2$ for $\mathbf{y}\in\mS^2$. $\wn$ and $\wt\Delta$ are the covariant derivative and the Laplacian on $(\mS^2,\sigma)$ respectively.

For the Minkowski spacetime and the original Schwarzschild spacetime, the usual coordinates are given by $t, \mathbf{x}$. Sometimes we write
$(t, \mathbf{x})$ as $(x^0,x^1,x^2,x^3)$. In terms of polar coordinates, $\mathbf{x}=r\mathbf{y}$ with $r=|\mathbf{x}|$ and $\mathbf{y}\in \mS^2$.  We always assume that $m\ge 0$. Moreover, in the rest of this paper, let
\be
r_*=r+2m\log\lf|\frac{r}{2m}-1\ri|
\ee
for $r>0$, $r\neq 2m$. By convention, $r_*=r$ if $m=0$. We will also denote  $h(r):=1-\frac{2m}r$. We will use the Einstein summation convention.

\subsection{Kruskal extension of Schwarzschild spacetime}\label{ss-extension}

Recall that the original Schwarzschild spacetime is given by  $\mathbb{X}^{3,1}=\R \times( \R^3\setminus \ol B(2m)  )$ where $\ol B(2m)$ is the close Euclidean ball with radius $2m$ and the Schwarzschild metric in polar coordinates in $\R^3$ is given by:
\be\label{e-Sch-metric}
g_\mathrm{Sch}=-\lf(1-\frac{2m}r\ri)dt^2+\lf(1-\frac{2m}r\ri)^{-1}dr^2+r^2 \sigma.
\ee
We consider the Minkowski spacetime as a special case by setting $m=0$.

For $m>0$, the Kruskal extension of Schwarzschild spacetime is given by \cite{Wald}:

\be\label{e-Kruskal-1}
g=\frac{32m^3}r \exp\lf(-\frac r{2m}\ri)\lf(-dT^2+dX^2\ri)+r^2d\sigma^2
\ee
with $r=r(T,X)$ given by
\be\label{e-TXr}
X^2-T^2=( \frac r{2m}-1)\exp(\frac r{2m}).
\ee
with
\be\label{e-TX}
-\infty<X^2-T^2<\infty.
\ee
There are four regions separated by $X^2-T^2=0$.  We only consider the Region I in the notation of \cite{LL2}, where
$X>0$, $X^2-T^2>0$ so that $r>2m$. This corresponds to the original Schwarzschild spacetime:

\be\label{e-I}
\left\{
  \begin{array}{ll}
    T=(\frac r{2m}-1)^\frac12 \exp(\frac r{4m})\sinh (\frac t{4m})=\exp\lf(\frac{r_*}{4m}\ri) \sinh (\frac t{4m}) \\
   X=(\frac r{2m}-1)^\frac12 \exp(\frac r{4m})\cosh(\frac t{4m})=\exp\lf(\frac{r_*}{4m}\ri)\cosh(\frac t{4m}).
  \end{array}
\right.
\ee

\subsection{Conformal compactification}\label{ss-compactification}
In order to understand constant mean curvature surfaces near null-infinity, we will use the usual compactification of the Kruskal extension.

Define $\xi, \chi$, with
\be\label{e-xi-1}
T+X=\tan(\xi+\chi), T-X=\tan(\xi-\chi).
\ee
By \eqref{e-TX}, we have
 \be\label{e-xi-2}
 \left\{
   \begin{array}{ll}
     -\frac\pi2<\xi+\chi, \xi-\chi<\frac\pi2;\\
     -\frac\pi4<\xi<\frac\pi4.
   \end{array}
 \right.
  \ee
Let $N$ denote the above region in $\R^2$, then $N\times {\mathbb{S}}^2$ is diffeomorphic to the Kruskal extension, which can be smoothly extended to the part of $\ol N\setminus N$, consisting of
\be\label{e-infinity}
\left\{
  \begin{array}{ll}
   \mathcal{I}^+: \xi+\chi&= \frac\pi2,\  $ with $0< \xi<\frac\pi 4;\\
   \mathcal{I}^-:  { -}\xi {+}\chi&= \frac\pi2,\  $with $-\frac\pi 4< \xi<0;\\
   \mathcal{I'}^+: \xi-\chi&=\frac\pi2,\ $ with $0\le \xi<\frac\pi 4;\\
   \mathcal{I'}^-: -\xi-\chi&= \frac\pi2,\  $ with $-\frac\pi 4< \xi<0.
  \end{array}
\right.
\ee
Here the sphere is suppressed.  $\mathcal{I}^+$ is the future null-infinity of Schwarzschild spacetime in the region $r>2m$ and  $\mathcal{I}^-$ is the past null-infinity.

\vskip .2cm

Next consider the metric. We are mainly interested in surfaces near the future null-infinity $\mathcal{I}^+$. In the region $r>2m$, consider the retarded null coordinate

\be\label{e-uv}
   \bar v=t-r_*=-4m\log(X-T)=-4m\log\tan(\chi-\xi).
\ee
and let $s=r^{-1}$.
At $ \mathcal{I}^+$, $-\frac\pi 4<\chi-\xi<\frac\pi 4$. So $\bar v$ can be extended smoothly up to the future null-infinity where $s=0$. $y^1,y^2, s, \bar v$ are  local coordinates of the spacetime near $\mathcal{I}^+$, where as before $y^1, y^2$ are local coordinates of $\mS^2$. Sometimes we write $(y^1,y^2, s, \bar v)$ as $(y^1,y^2,y^3,y^4)$. Near $\mathcal{I}^+$, the spacetime is $\mS^2\times(0,s_0)$ for some $s_0>0$, and the  Schwarzschild metric is:
\be\label{e-metric-null}
\begin{split}
g_{\mathrm{Sch}}=&-(1-2ms)d {  \bar v}^2+2s^{-2}  d {  \bar v}ds+s^{-2} \sigma\\
=&s^{-2}(-s^2(1-2ms)d {  \bar v}^2+2d {  \bar v} ds+\sigma)\\
=:&s^{-2}\bar g.
\end{split}
\ee
Here the unphysical metric $\bar g$ is a Lorentz metric near $\mathcal{I}^+$ and is smooth up to $\mathcal{I}^+$. Moreover, $\bar g$ is a product metric:
 \bee
\ol g=(\sigma_{AB})\oplus\left(
      \begin{array}{cc}
        0 & 1 \\
        1 &-s^2(1-2ms) \\
      \end{array}
    \right).
\eee
Here $(\sigma_{AB})$ is the standard metric for $\mS^2$ in local coordinates. In case $m=0$, this is a metric for the future null-infinity of the Minkowski spacetime for such kind of compactification.

\subsection{Mean curvature equations}

\subsubsection{Mean curvature equation near $r=2m$}\label{ss-equation-2m}

Here we assume that $m>0$. In order to study the behavior of the surface near $r=2m$, it is more convenient to use the new variable $\eta$ with
\bee
\eta=h^\frac12.
\eee
Then $dr=m^{-1} \eta r^2d\eta$, $r=2m(1-h)^{-1}=2m(1-\eta^2)^{-1}$. Hence
\be\label{e-metric}
\begin{split}
g_{\mathrm{Sch}}
=&-\eta^2 dt^2+ m^{-2}r^4d\eta^2+r^2 \sigma_{AB}dy^Ady^B
 \end{split}
\ee
Consider a hypersurface $\Sigma$ given as the level set $\{F=0\}$ of a smooth function $F$. Assume $\Sigma$ is spacelike so that $\nabla F$ is timelike. Suppose that $\nabla F$ is future directed. Let $L:=-\la\nabla F,\nabla F\ra$ and consider the future directed unit normal $\mathbf{n}=L^{-\frac12}\nabla F$. Then the mean curvature $H$ of $\Sigma$, which is one-third of the trace of the second fundamental form, is given by
\be\label{e-meancurv-1}
H=\frac13 \mathrm{div}(\mathbf{n})={ \frac13 } \nabla_a(L^{-\frac12}g^{ab}F_b)={ \frac13 } (L^{-\frac12}g^{ab}F_{;ab}-\frac12L^{-\frac32}
\la \nabla L,\nabla F\ra ),
\ee
where $F_{;ab}$ is the Hessian of $F$ with respect to $g_{\mathrm{Sch}}$.

In the following, temporarily denote $x^1=y^1, x^2=y^2, x^3=\eta, x^4=t$. Also, in the following, $a,b,\ldots$ range from 1 to 4 and $i,j,\ldots$ range from 1 to 3, and $A, B, \dots$ range from 1 to 2. Consider a spacelike surface $\Sigma$ which is a graph of a function $u$.
Let $u$ be a smooth function on $\mathbb{R}^{3}$. Denote $F=-t+u$. It is easy to see that $\nabla F$ is in the direction of $\p_t$ and is future directed.

\begin{lma}\label{l-equation-eta} The mean curvature $H$ of $\Sigma$ is given by
\bee
\begin{split}
3HL^\frac32
=&  m^2r^{-4} \lf(L u_{\eta\eta}+(L\eta^{-1}-\frac12L_\eta)  u_\eta \ri)  +r^{-2} \lf(L\wt\Delta u-\frac12 \la\wn L,\wn u\ra  \ri)
\end{split}
\eee
where
$$
L= \eta^{-2}-m^2r^{-4}u_\eta^2-r^{-2}|\wn u|^2.
$$
\end{lma}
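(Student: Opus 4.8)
The plan is to reduce everything to the divergence form of the Laplacian with respect to $g_{\mathrm{Sch}}$ and to exploit the product structure of the metric \eqref{e-metric}. First I would record, in the coordinates $x^1=y^1,\ x^2=y^2,\ x^3=\eta,\ x^4=t$, that $g_{\mathrm{Sch}}$ is diagonal with $g_{AB}=r^2\sigma_{AB}$, $g_{33}=m^{-2}r^4$, $g_{44}=-\eta^2$, so that $g^{AB}=r^{-2}\sigma^{AB}$, $g^{33}=m^2r^{-4}$, $g^{44}=-\eta^{-2}$. Since $F=-t+u$ with $u=u(y^1,y^2,\eta)$, its gradient has components $F_A=u_A$, $F_\eta=u_\eta$, $F_t=-1$. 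Plugging into $L=-\la\n F,\n F\ra=-g^{ab}F_aF_b$ and using diagonality immediately yields the stated $L=\eta^{-2}-m^2r^{-4}u_\eta^2-r^{-2}|\wn u|^2$.

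Next I would compute the two ingredients of \eqref{e-meancurv-1} after multiplying by $L^{3/2}$, namely $3HL^{3/2}=Lg^{ab}F_{;ab}-\frac12\la\n L,\n F\ra$. For the lower-order term, note that $L$ is independent of $t$, so $L_t=0$ and $\la\n L,\n F\ra=g^{AB}L_Au_B+g^{33}L_\eta u_\eta=r^{-2}\la\wn L,\wn u\ra+m^2r^{-4}L_\eta u_\eta$; this already accounts for the two terms carrying the factor $-\frac12$ in the target formula.

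The main computation is $g^{ab}F_{;ab}=\Delta_{g_{\mathrm{Sch}}}F$. Rather than computing Christoffel symbols, I would use $\Delta_g F=|g|^{-1/2}\p_a(|g|^{1/2}g^{ab}F_b)$. From the block structure, $|g|^{1/2}=m^{-1}r^4\eta\sqrt\sigma$ (with $\sigma=\det(\sigma_{AB})$). The crucial simplifications are that $r=2m(1-\eta^2)^{-1}$ depends on $\eta$ only, that $\sqrt\sigma$ depends on $y^1,y^2$ only, and that the $t$-flux $|g|^{1/2}g^{44}F_t$ is $t$-independent and hence contributes nothing to the divergence. Collecting the angular contribution into $\wt\Delta u$ and differentiating the $\eta$-flux $m\eta\sqrt\sigma\,u_\eta$ gives, after dividing by $|g|^{1/2}$, the clean expression $g^{ab}F_{;ab}=r^{-2}\wt\Delta u+m^2r^{-4}(u_{\eta\eta}+\eta^{-1}u_\eta)$.

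Finally I would substitute these two pieces into $3HL^{3/2}=Lg^{ab}F_{;ab}-\frac12\la\n L,\n F\ra$ and group by the radial factors $m^2r^{-4}$ and $r^{-2}$, which assembles exactly the asserted identity, the $u_\eta$ terms combining into the coefficient $(L\eta^{-1}-\frac12 L_\eta)$. The only step requiring genuine care is the Laplacian computation: the payoff of the divergence form is that no Christoffel symbol is needed, so the principal (though ultimately routine) obstacle is the bookkeeping of the $\eta$-dependence of $r$ and of $|g|^{1/2}$ when differentiating the $\eta$-flux.
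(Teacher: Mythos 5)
Your proposal is correct, and it reaches the lemma by a genuinely different route from the paper. The paper computes the full table of Christoffel symbols of $g_{\mathrm{Sch}}$ in the coordinates $(y^1,y^2,\eta,t)$ and evaluates $g^{ab}F_{;ab}=g^{ab}(F_{ab}-\Gamma^c_{ab}F_c)$ term by term; you instead identify the trace of the Hessian with $\Delta_{g_{\mathrm{Sch}}}F$ and use the divergence form $\Delta F=|g|^{-1/2}\p_a\lf(|g|^{1/2}g^{ab}F_b\ri)$ with $|g|^{1/2}=m^{-1}r^4\eta\sqrt{\det\sigma}$. Both arguments rest on the same two structural facts---the metric is diagonal in these coordinates, and nothing depends on $t$, so $L_t=0$ and the $t$-flux $|g|^{1/2}g^{44}F_t=m^{-1}r^4\eta^{-1}\sqrt{\det\sigma}$ is constant in $t$---and both produce the identical intermediate identity $g^{ab}F_{;ab}=m^2r^{-4}(u_{\eta\eta}+\eta^{-1}u_\eta)+r^{-2}\wt\Delta u$, after which the assembly of $3HL^{3/2}=Lg^{ab}F_{;ab}-\frac12\la\n L,\n F\ra$ is common to both proofs. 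Your route buys a shorter computation: the $\eta$-flux collapses to $m\eta\sqrt{\det\sigma}\,u_\eta$ because the $r^4$ in the volume element exactly cancels $g^{33}=m^2r^{-4}$, so the $\eta$-dependence of $r$ never actually enters the differentiation---the step is even cleaner than your own caveat about bookkeeping suggests. What the paper's longer computation buys is the explicit list of connection coefficients in these coordinates, though those symbols are not reused elsewhere in the paper. One small point worth making explicit in a final write-up: the divergence formula for the Laplacian holds verbatim for Lorentzian metrics with $|g|=|\det g|$, which is what licenses your main step.
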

\begin{proof} As above, let $F=-t+u$, then $\nabla F$ is future directed.
\bee
 \left\{
  \begin{array}{ll}
  F_i=  u_i; \\
F_4=-1.
  \end{array}
\right.
\eee
Here $F_a, u_i$ are partial derivatives with respect to $a, i$, etc. Then
\bee
 \left\{
  \begin{array}{ll}
   F_{ij}=u_{ij}; \\
F_{4a}=0.
  \end{array}
\right.
\eee
The Christoffel symbols are
\bee
\left\{
  \begin{array}{ll}
   \Gamma_{44}^4=\Gamma_{4A}^4=\Gamma_{33}^4=\Gamma_{3A}^4=\Gamma_{AB}^4=0;\ \  \Gamma_{43}^4= \eta^{-1}\\
\Gamma^3_{AB}=-mr^{-1}{ \eta}\sigma_{AB}; \Gamma_{33}^3=2m^{-1}r  \eta; \Gamma_{44}^3=m^2r^{-4}\eta;\\
\Gamma^3_{A3}=\Gamma^3_{A4}=\Gamma_{34}^3=0;\\
\Gamma^B_{CD}=\wt \Gamma_{CD}^B;\  \Gamma^B_{{ B}3}= { r } m^{-1}\eta;\\
\Gamma^B_{C4}=\Gamma^B_{33}=\Gamma^B_{44}=\Gamma_{34}^B=0;\\
  \Gamma^B_{C3}=0, \ \ \hbox{if $B\neq C$}.
  \end{array}
\right.
\eee
Hence
\be\label{e-meancurv-2}
\begin{split}
g^{ab}F_{;ab}
=& \eta^{-2}\lf({ - }\Gamma_{44}^4+\Gamma_{44}^i u_i\ri)+g^{ij}\lf(u_{ij}{  + }\Gamma_{ij}^4 -\Gamma_{ij}^ku_k\ri)\\
=&\eta^{-2}m^2r^{-4}\eta u_\eta+m^2r^{-4}\lf(u_{\eta\eta}-2{ m^{-1} }r\eta u_\eta\ri)
\\
&+r^{-2}\sigma^{AB}\lf(u_{AB}-\wt\Gamma_{AB}^Cu_C+mr^{-1}{  \eta}\sigma_{AB}u_\eta\ri)\\
=&m^2r^{-4} u_{\eta\eta}+ {   m^2r^{-4}\eta^{-1}u_\eta}+r^{-2}\wt\Delta u
\end{split}
\ee

On the other hand
\bee
\begin{split}
-L=&g^{ab}F_aF_b\\
=&-\eta^{-2}+m^2r^{-4}u_\eta^2+r^{-2}\sigma^{AB}u_Au_B\\
=&-\eta^{-2}+m^2r^{-4}u_\eta^2+r^{-2}|\wn u|^2_\sigma
\end{split}
\eee
Hence
\bee
\begin{split}
\la \nabla L,\nabla F\ra=&-\eta^{-2}L_tF_t+m^2r^{-4}L_\eta F_\eta+r^{-2}\sigma^{AB}L_AF_B\\
=&m^2r^{-4}L_\eta u_\eta +r^{-2}\la\wn L,\wn u\ra_\sigma.
\end{split}
\eee
Thus the result follows.
\end{proof}

\subsubsection{Mean curvature equation near future null-infinity}\label{ss-equation-future-null}
Near the future null-infinity, we use the coordinates which are well-defined and smooth on the compactified spacetime up to $\mathcal{I}^+$, namely $y^1, y^2, s, \bar v$ as in \S\ref{ss-compactification}.  Let $\ol g$ be the unphysical metric. So
\bee
(\ol g)^{-1}=(\sigma^{AB})\oplus
 \left(
      \begin{array}{cc}
      s^2(1-2ms)  & 1 \\
        1 &   0\\
      \end{array}
    \right).
\eee
Here $(\sigma^{AB})$ is the inverse of $(\sigma_{AB})$.

Suppose the surface is given by the graph of $u$, that is $t=u$. In the coordinates $(y^1, y^2, s, \bar v)$, the surface is given by

\begin{equation}\label{defofp}
{ \bar{v}} =u-r_*=-P,	
\end{equation}

in our previous notation. Denote $F={\bar{v}}+P$. Let us compute the mean curvature $H$. Now
$$
d F=P_Ady^A+P_s\p_s+\p_{\bar v}.
$$
so
\be\label{Ldef}
-L:=\la \nabla F,\nabla F\ra_{\ol g}=2 P_s+s^2(1-2ms)P_s^2+|\wt\nabla P|^2.
\ee
Note that $-\nabla F$ is future directed.
\begin{lma}\label{l-equation} The mean curvature equation with respect to the future directed unit normal in this setting is:
\be\label{e-CMC-null-1}
\begin{split}
-3  H L^\frac32=&sL \lf( s^2(1-2ms)P_{ss}+\wt \Delta P\ri)-\frac12 s\lf(L_s+s^2(1-2ms)L_sP_s+\la\wn L,\wn P\ra\ri)\\
&-s^2LP_s-3L
\end{split}
\ee
where
\bee
L=- (2P_s+s^2(1-2ms)P_s^2+|\wt\nabla P|^2 ).
\eee
\end{lma}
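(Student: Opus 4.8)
The plan is to reduce the physical mean curvature, which in \eqref{e-meancurv-1} is defined through the Schwarzschild metric $g_{\mathrm{Sch}}=s^{-2}\ol g$ of \eqref{e-metric-null}, to a computation carried out entirely with the unphysical metric $\ol g$, since $L$ and $F$ in \eqref{Ldef} are expressed naturally in terms of $\ol g$. Because $-\nabla F$ is future directed, the future directed unit normal is $\mathbf{n}=-L_{\mathrm{ph}}^{-1/2}\nabla_{g_{\mathrm{Sch}}}F$ with $L_{\mathrm{ph}}:=-\la\nabla F,\nabla F\ra_{g_{\mathrm{Sch}}}$. Since $g_{\mathrm{Sch}}^{ab}=s^{2}\ol g^{ab}$, one has $L_{\mathrm{ph}}=s^{2}L$ and $\mathbf{n}^{a}=-sL^{-1/2}\ol g^{ab}F_{b}$. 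First I would write $3H=\mathrm{div}_{g_{\mathrm{Sch}}}\mathbf{n}=|g_{\mathrm{Sch}}|^{-1/2}\p_{a}\lf(|g_{\mathrm{Sch}}|^{1/2}\mathbf{n}^{a}\ri)$ and use $|g_{\mathrm{Sch}}|^{1/2}=s^{-4}|\ol g|^{1/2}$ to factor out the conformal weight.

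The key structural point is that $|g_{\mathrm{Sch}}|^{1/2}\mathbf{n}^{a}=-s^{-3}|\ol g|^{1/2}L^{-1/2}\ol g^{ab}F_{b}$, so differentiating produces two pieces: an "$s$ times $\ol g$-divergence" term, and a conformal correction coming from $\p_{s}(s^{-3})=-3s^{-4}$ (the index $3$ refers to the $s$-coordinate). Carrying this out yields
\[
3H=3L^{-1/2}\ol g^{3b}F_{b}-s\,\mathrm{div}_{\ol g}\lf(L^{-1/2}\ol g^{ab}F_{b}\ri).
\]
Expanding the $\ol g$-divergence as $L^{-1/2}\Delta_{\ol g}F-\tfrac12 L^{-3/2}\la\nabla L,\nabla F\ra_{\ol g}$ and multiplying the whole identity by $-L^{3/2}$ reduces the lemma to computing three ingredients with respect to $\ol g$.

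I would then compute these from the explicit form of $\ol g$ and its inverse. The contraction $\ol g^{3b}F_{b}=s^{2}(1-2ms)P_{s}+1$ uses $F_{s}=P_{s}$ and $F_{\bar v}=1$; the term "$+1$" is the source of the $-3L$ in \eqref{e-CMC-null-1}. For the first-order term, the block-diagonal structure of $\ol g$ (the sphere block orthogonal to the $(s,\bar v)$ block) together with $L_{\bar v}=0$ gives $\la\nabla L,\nabla F\ra_{\ol g}=\la\wn L,\wn P\ra+s^{2}(1-2ms)L_{s}P_{s}+L_{s}$, which is exactly the bracket in the target. The one slightly delicate computation is $\Delta_{\ol g}F$: here I would exploit that $\det\ol g=-\det\sigma$, so $|\ol g|^{1/2}=\sqrt{\det\sigma}$ is independent of $s$ and $\bar v$. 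Consequently the contracted Christoffel symbols simplify to $\ol g^{ab}\Gamma_{ab}^{C}=\sigma^{AB}\wt\Gamma_{AB}^{C}$, $\ol g^{ab}\Gamma_{ab}^{s}=-(2s-6ms^{2})$ and $\ol g^{ab}\Gamma_{ab}^{\bar v}=0$, whence $\Delta_{\ol g}F=\wt\Delta P+s^{2}(1-2ms)P_{ss}+(2s-6ms^{2})P_{s}$.

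Finally I would assemble the pieces. The main bookkeeping obstacle is the coefficient of $LP_{s}$: the Christoffel contribution $(2s^{2}-6ms^{3})$ arising inside $sL\Delta_{\ol g}F$ must be combined with the conformal correction $-3s^{2}(1-2ms)$ coming from $-3L\,\ol g^{3b}F_{b}$, and these collapse to exactly $-s^{2}$, producing the term $-s^{2}LP_{s}$. Keeping the conformal weight and the overall sign (from "$-\nabla F$ future directed") straight through this cancellation is the only real subtlety; the remaining terms match \eqref{e-CMC-null-1} termwise. An equivalent route would be to compute the Christoffel symbols of $g_{\mathrm{Sch}}$ directly as in Lemma \ref{l-equation-eta}, but the conformal reduction above avoids the $s^{-2}$ factor and is cleaner.
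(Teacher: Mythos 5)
Your proposal is correct and is essentially the same argument as the paper's: the paper performs the identical conformal reduction to $\ol g$ by invoking the transformation law $-3H = 3e^{-\lambda}\lf(\ol H + \la d\lambda,\bar{\mathbf{n}}\ra_{\ol g}\ri)$ with $\lambda=-\log s$, and the correction term $3\la d\lambda,\bar{\mathbf{n}}\ra_{\ol g}$ there is exactly your $\p_s(s^{-3})$ contribution $3L^{-\frac12}\ol g^{3b}F_b=3L^{-\frac12}(1+s^2(1-2ms)P_s)$. Your evaluation of $\Delta_{\ol g}F$ via $\det\ol g=-\det\sigma$ and contracted Christoffel symbols yields the same expression $\wt\Delta P+2s(1-3ms)P_s+s^2(1-2ms)P_{ss}$ that the paper gets from the product structure $\ol g=\sigma\oplus g_1$, and your final cancellation $2s^2(1-3ms)-3s^2(1-2ms)=-s^2$ producing $-s^2LP_s$ is precisely the paper's bookkeeping.
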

\begin{proof}
Suppose
\bee
g=e^{2\lambda}\ol g,
\eee
then
\be\label{e-mean-curv}
 -3H= g^{ab}A_{ab} = e^{-\lambda}\lf(\ol g^{ab}\ol A_{ab}+ 3\la d\lambda,\bar{ \mathbf{n}}\ra_{\ol g}\ri)
= 3 e^{-\lambda}(\ol H+\la d\lambda, \bar{ \mathbf{n}}\ra_{\ol g}),
\ee
where $A, \ol A$ are the second fundamental forms and $H, \ol H$ are the mean curvatures with respect to $g_{\mathrm{Sch}}$ and $\ol g$ respectively.

By \eqref{e-metric-null}, $\lambda=-\log s$. Hence $d\lambda=-s^{-1} ds$. The normal vector is
$\bar{ \mathbf{n}}=L^{-\frac12}\nabla F$. So
\be
\la d\lambda, \bar{ \mathbf{n}}\ra_{\ol g}=-s^{-1}L^{-\frac12}\la ds,dF\ra=-s^{-1}L^{-\frac12}(1+s^2(1-2ms)P_s).
\ee
On the other hand,
\bee
-3\ol H= L^{-\frac12}\lf(\ol g^{ab}+L^{-1}F^aF^b\ri)F_{;ab}\\
= L^{-\frac12} (\Delta_{\ol g}F-\frac12  L^{-1}\la dL,dF\ra_{\ol g} ).
\eee
 Since $\ol g$ is a product metric so that $\ol g=\sigma\oplus  g_1$, where $g_1=-s^2(1-2ms)d {  \bar v}^2+2d {  \bar v} ds$. So

\bee
\begin{split}
\Delta_{\ol g}F=& \wt\Delta P+\Delta_{g_1}F\\
=&\wt \Delta P+2s(1-3ms)P_s+s^2(1-2ms)P_{ss}
\end{split}
\eee

Hence we have
\bee
\begin{split}
 -3H=&s L^{-\frac12}\big(2s(1-3ms)P_s+s^2(1-2ms)P_{ss}+\wt\Delta P-\frac12  L^{-1}\la dL,dF\ra_{\ol g} \big)\\
&-3  L^{-\frac12}(1+s^2(1-2ms)P_s)\\
=&s L^{-\frac12} ( s^2(1-2ms)P_{ss}+\wt\Delta P )-\frac12 s L^{-\frac{3}{2}}\lf(L_s+s^2(1-2ms)L_sP_s+\la \wn L,\wn P\ra\ri)\\
&-s^{2}L^{-\frac12}P_s -3  L^{-\frac12}.
\end{split}
\eee
Multiply the above by $L^\frac32$, the result follows.

\end{proof}

\section{Behaviors of spacelike CMC surfaces at future null-infinity}\label{s-future-null}

In this section, we want to discuss the behavior of a spacelike CMC surface which is the graph of a function $u$, that is $t=u(  \mathbf{y},r)$ with $\mathbf{y}\in \mathbb{S}^2$, $r>r_0$  near the future null-infinity in the compactification as in \S\ref{ss-compactification}. Denote $P(u):= r_*-u$ and consider $P(u)$ as a function of $\mathbf{y}, s$. We always assume the mean curvature of the surface with respect to the future directed unit normal is positive.

\subsection{Spacelike SSCMC surfaces}\label{ss-SSCMC}
 To motivate our study, let us first consider spacelike SSCMC surfaces. By \cite{LL} a spacelike SSCMC with mean curvature $H>0$ in the region I is given by the graph of $f(r)$ defined on $r>2m$ satisfying
\be\label{e-SSeq-1}
f''=3H\lf(h^{-1}-h(f')^2\ri)^\frac 32-\lf[(h^{-1}-h(f')^2)(\frac {2h}r+\frac{h'}2)+\frac{h'}{h}\ri]f'
\ee
and
\be\label{e-SSeq-2}
f'(r)=\frac{\ell}{h \sqrt{1+\ell^2} }=\frac1{h\sqrt{1+\ell^{-2}}}
\ee
where $\ell=\frac{1}{\sqrt h}({  H } r+\frac c{r^2})$. Here $'$ means $\p_r$ for some fixed $r_0>2m$ and $c$ is a constant.  As before   $h=1-\frac{2m}r$.

\begin{prop}\label{p-SS-P} Let $P=P(f)=r_*-f$.
\begin{enumerate}
  \item [(i)] $P$ is smooth up to $s=0$ as a function of $\mathbf{y}, s$.
  \item[(ii)] At $s=0$, $\frac{d^k}{ds^k}P$ can be expressed in terms of $H, m, c$. More precisely, at $s=0,$ $P_s =- \frac{1}{2}{  H^{-2}}$, $P_{ss} =0$, $P_{sss} = \frac{3}{4}{  H^{-4}} $, and $P_{ssss}=H^{-4}(6cH-\frac 92m)$. Moreover,
  $\frac{d^k}{ds^k}P$ for $k\ge 5$ can be expressed in terms of $H, m$ and $P_{ssss}(0)$.

\end{enumerate}

\end{prop}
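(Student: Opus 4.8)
The plan is to sidestep the second-order equation \eqref{e-SSeq-1} and work directly from the first integral \eqref{e-SSeq-2}, producing a closed form for $P_s$ in the single variable $s$. Because the surface is spherically symmetric, $f=f(r)$ and hence $P=r_*-f$ depends only on $s=r^{-1}$, so smoothness and the Taylor data in $(\mathbf y,s)$ reduce to the one-variable question in $s$. First I would record $\frac{dr_*}{dr}=h^{-1}$, whence $P'(r)=h^{-1}-f'=h^{-1}\big(1-\ell(1+\ell^2)^{-1/2}\big)$ by \eqref{e-SSeq-2}. Writing $\ell=N(s\sqrt h)^{-1}$ with $N:=H+cs^3$, rationalizing $1-\ell(1+\ell^2)^{-1/2}=\big[\sqrt{1+\ell^2}\,(\sqrt{1+\ell^2}+\ell)\big]^{-1}$, and converting through $\frac{dr}{ds}=-s^{-2}$, the factors of $s^2h$ cancel and one lands on
\be
P_s=-\frac{1}{Q(Q+N)},\qquad Q:=\sqrt{s^2 h+N^2},\quad N=H+cs^3,\quad h=1-2ms,
\ee
so that $R:=Q^2=H^2+s^2+(2Hc-2m)s^3+c^2s^6$ is a polynomial. (The same identity can be extracted from Lemma \ref{l-equation} after setting $\wn P\equiv0$, but \eqref{e-SSeq-2} is quicker.)

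For (i), I would observe that $R(0)=H^2>0$, so $R$ is positive for small $s$ and $Q=\sqrt R$ is real-analytic up to $s=0$; since also $Q+N\to 2H>0$, the denominator $Q(Q+N)$ is bounded away from $0$ and $P_s$ is real-analytic on a neighborhood of $s=0$. Integrating a function analytic up to the boundary shows $P$ extends real-analytically, in particular smoothly, up to $s=0$.

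For the explicit values in (ii), the computation is a power-series expansion. Using $Q=H+\tfrac{s^2}{2H}+(c-\tfrac mH)s^3+O(s^4)$ I would form $Q(Q+N)=R+QN=2H^2+\tfrac32 s^2+(4Hc-3m)s^3+O(s^4)$ and invert the geometric series, obtaining
\be
P_s=-\frac{1}{2H^2}+\frac{3}{8H^4}s^2+\frac{4Hc-3m}{4H^4}s^3+O(s^4).
\ee
Matching against $P_s=\sum_{k\ge1}\frac{1}{(k-1)!}\frac{d^kP}{ds^k}(0)\,s^{k-1}$ yields $P_s(0)=-\tfrac12H^{-2}$, $P_{ss}(0)=0$, $P_{sss}(0)=\tfrac34H^{-4}$ and $P_{ssss}(0)=H^{-4}(6cH-\tfrac92 m)$, exactly the claimed quantities.

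For the higher derivatives I would argue structurally instead of computing. By the displayed closed form, $P_s$ is a fixed analytic function of $s$ whose only parameters are $H,m,c$, so every $\frac{d^kP}{ds^k}(0)$ is a universal function of $(H,m,c)$. The relation $P_{ssss}(0)=H^{-4}(6cH-\tfrac92 m)$ is affine in $c$ with nonzero coefficient $6H^{-3}$, hence inverts to express $c$ through $H,m,P_{ssss}(0)$; substituting back makes each $\frac{d^kP}{ds^k}(0)$ with $k\ge5$ a function of $H,m$ and $P_{ssss}(0)$. The only genuine labor is the Taylor bookkeeping, and the step that renders everything routine is the rationalization collapsing $P_s$ to the single reciprocal $-[Q(Q+N)]^{-1}$; without it, expanding $f'$ head-on would be considerably messier.
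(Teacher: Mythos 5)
Your proof is correct and follows essentially the same route as the paper: after dividing numerator and denominator by $\ell^2$, your closed form $P_s=-[Q(Q+N)]^{-1}$ is precisely the paper's identity $P_s=-\frac{1}{(H+cs^3)^2}\cdot\frac{1}{(1+\ell^{-2})^{1/2}\lf((1+\ell^{-2})^{1/2}+1\ri)}$, and both arguments then conclude via analyticity of $P_s$ at $s=0$, a Taylor expansion yielding the same four coefficients, and inversion of the affine relation $P_{ssss}(0)=H^{-4}(6cH-\frac92 m)$ to eliminate $c$. Your only (harmless) variation is packaging the radicand as the explicit polynomial $R=Q^2$, which makes the expansion bookkeeping slightly cleaner.
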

Before we prove the proposition, we have the following remarks.
\begin{rem}
\begin{enumerate}
  \item [(i)] By adding a constant to a solution $f$ of \eqref{e-SSeq-1}, we may assume that $P(f)=0$ at $s=0$. By the proposition, we see that there are infinitely many spacelike SSCMC   surfaces with the same initial data at infinity up to order 3.
  \item [(ii)] A spacelike SSCMC surface with $P(f)=0$ as $s=0$ will be uniquely determined by the value of $\frac{d^4}{ds^4}P(f)$ at $s=0$.
\end{enumerate}
\end{rem}
\begin{proof}[Proof of Proposition \ref{p-SS-P}]  (i) By \eqref{e-SSeq-2},
\be\label{e-SSCMC-expansion}
\begin{split}
P_s=& s^{-2}\lf(f'-\frac{r}{r-2m}\ri)\\
=&-\frac1{({  H }+cs^3)^2}\cdot \frac{1}{(1+\ell^{-2})^{ \frac12}\lf((1+\ell^{-2})^{ \frac12}+1\ri)}
\end{split}
\ee
with
$$
\ell^{-2}=\frac{s^2(1-2ms)}{(H +cs^3)^2}.
$$
 Hence $P_s$ is integrable near $s=0$ and so $\lim_{s\to0}P$ exists. Moreover, $P_s$ is analytic near $s{=0}$. Hence  (i) is true.

 (ii) The first statement follows immediately from \eqref{e-SSCMC-expansion}.

Now,
\begin{align*}
 \ell^{-2}&=  s^{2}(1-2ms)(  H+ cs^3  )^{-2}\\
 =&H^{-2}\lf(s^{2}-2ms^{3}-2cH^{-1}s^5\ri)+O(s^6).
\end{align*}

\bee
\begin{split}
P_s=&s^{-2}\cdot\frac1{1-2ms}\lf( (1+\ell^{-2})^{-\frac12}-1\ri)\\
=&\sum_{k=0}^3(2ms)^k\cdot\lf(-\frac12s^{-2}\ell^{-2}+\frac38s^{-2}\ell^{-4} \ri)+O(s^4)\\
=&\sum_{k=0}^3(2ms)^k\cdot\lf(-\frac12 H^{-2}+mH^{-2}s+\frac38 H^{-4}s^2+(cH^{-3} -\frac32m H^{-4}) s^3\ri)+O(s^4).
\end{split}
 \eee
From this, it is easy to see that at $s=0$, $P_s=-\frac12 H^{-2}$, $P_{ss}=0$, $P_{sss}=\frac34H^{-4}$ and $P_{ssss}=H^{-4}(6cH-\frac 92m)$.
The last assertion is true because $c$ can be expressed in terms of $H, m$ and $P_{ssss}(0)$.
\end{proof}

We will prove later that the proposition is also true in certain sense for general spacelike CMC surface so that $P$ is smooth up to $s=0$.

We want to investigate more on the structure of SSCMC surfaces.
In case $m=0$, namely, in Minkowski spacetime a complete spacelike SSCMC with mean curvature $H>0$ given by a graph of a solution of  \eqref{e-SSeq-1} is a hyperboloid. It is a hyperbolic space of constant mean curvature $-H$. One expects that in general, a spacelike SSCMC surface in Schwarzschild spacetime is asymptotically to a hyperbolic space of constant mean curvature $-H$. To be precise, let us only consider the case that $H=1$. Following Wang \cite{Wang2001}, a Riemannian manifold $(X^3, g)$ is {\it conformally compact} if $X$ is the interior of a smooth manifold $\ol X$ with boundary $\p X$ such that $\ol g=\tau^2g$ can be  extended to be a $C^3$ metric on $\ol X$ with $\tau>0$ in $X$, $\tau=0$ on $\p X$ and $|d\tau|_{\ol g}>0$.  $(X^3,g)$ is said to be {\it strongly asymptotically hyperbolic} if it is conformally compact with defining function $\tau$ so that near $\tau=0$,
$$
g=\frac1{\sinh^2\tau}\lf(d\tau^2+g_\tau\ri)
$$
where $g_\tau$ is the induced metric on the level set of $\tau$ and
$$
g_\tau=\sigma+\frac{\tau^3}3\zeta+O(\tau^4)
$$
where   $\zeta$ is a symmetric two tensor on $\mS^2$ and the expansion can be differentiated twice. We want to prove:
\begin{prop}\label{p-AH-SSCMC}
Every spacelike SSCMC surface with constant mean curvature 1 is strongly asymptotically hyperbolic in the sense of \cite{Wang2001}. Moreover, the scalar curvature $\cS$ of $\Sigma$ satisfies $|\cS+6|=O(s^6)$ and $\cS+6\ge 0$.
\end{prop}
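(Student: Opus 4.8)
The plan is to prove Proposition \ref{p-AH-SSCMC} in two stages: first establish the asymptotically hyperbolic structure using the explicit solution formulas \eqref{e-SSeq-2} and \eqref{e-SSCMC-expansion}, and then extract the scalar curvature decay from the expansion of the induced metric. First I would recall that the induced metric $G$ on a spacelike SSCMC surface $\Sigma$, which is the graph $t=f(r)$, can be written in terms of $r$ (equivalently $s=r^{-1}$) and the sphere $\mS^2$. Since $f$ depends only on $r$, the metric is of warped form $G=\left(h^{-1}-h(f')^2\right)^{-1}dr^2+r^2\sigma$; using \eqref{e-SSeq-2} one has $h^{-1}-h(f')^2=(1+\ell^{-2})^{-1}h^{-1}$, so the radial coefficient simplifies. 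The goal is to produce a defining function $\tau$ and show that in the coordinate $\tau$ the metric takes the Wang normal form $\frac{1}{\sinh^2\tau}(d\tau^2+g_\tau)$ with $g_\tau=\sigma+\frac{\tau^3}{3}\zeta+O(\tau^4)$.

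Next I would construct $\tau$ explicitly. Because the induced metric is asymptotically of constant curvature $-1$ (recall for $m=0$ and $H=1$ the surface is a genuine hyperboloid), the natural choice is to define $\tau$ so that $\frac{1}{\sinh\tau}=r(1+o(1))$ near $s=0$, i.e.\ $\tau$ is comparable to $s=r^{-1}$. Concretely I would set $d\tau=-\left(h^{-1}-h(f')^2\right)^{-1/2}\sinh\tau\,\frac{dr}{r}$-type relation, or more cleanly solve for $\tau$ as a function of $s$ via an ODE so that the $dr^2$ part becomes $\frac{d\tau^2}{\sinh^2\tau}$. Then $g_\tau=\sinh^2\tau\cdot r^2\sigma$, and I must expand $\sinh^2\tau\cdot r^2$ as a power series in $\tau$ (equivalently in $s$) using Proposition \ref{p-SS-P}, which already gives $P_s,P_{ss},P_{sss},P_{ssss}$ at $s=0$ in closed form. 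The point of the expansion is to verify that the coefficient of $\tau$ and $\tau^2$ in $g_\tau-\sigma$ vanish, so that the leading correction is genuinely at order $\tau^3$, matching Wang's definition; the tensor $\zeta$ is then read off and is proportional to $\sigma$ since everything is spherically symmetric. The regularity ($C^3$ extendability of $\ol g=\tau^2 G$) follows from the analyticity of $P_s$ near $s=0$ established in the proof of Proposition \ref{p-SS-P}(i).

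For the scalar curvature statement, I would compute $\cS$ directly for the warped product metric. For a metric of the form $G=\phi(r)^2 dr^2+r^2\sigma$ on $(0,s_0^{-1})\times\mS^2$ there is a standard formula for the Gauss/scalar curvature of the surface (here $\Sigma$ is $2$-dimensional, so $\cS=2K$ with $K$ the Gauss curvature), expressible in terms of $r$, $\phi$, and $\phi'$. Substituting $\phi^2=\left(h^{-1}-h(f')^2\right)^{-1}=h(1+\ell^{-2})$ and using $\ell^{-2}=s^2(1-2ms)(H+cs^3)^{-2}$ with $H=1$ from \eqref{e-SSCMC-expansion}, I would expand $\cS+6$ in powers of $s$. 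The expansion of $\ell^{-2}$ given in the proof of Proposition \ref{p-SS-P} shows the correction to the flat profile starts at $O(s^6)$ after the leading constant-curvature term cancels, yielding $|\cS+6|=O(s^6)$; the sign $\cS+6\ge0$ should come from the fact that the $O(s^6)$ coefficient, controlled by the mass $m\ge0$ and the curvature of Schwarzschild, is nonnegative.

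The main obstacle I anticipate is the bookkeeping in the change of variables from $s$ (or $r$) to the geodesic-type defining function $\tau$ and confirming that the first two correction coefficients in $g_\tau$ vanish so the expansion really begins at $\tau^3$; a naive $\tau\sim s$ will not satisfy the normal form exactly, and I will need the precise second-order relation between $\tau$ and $s$ dictated by the ODE $\frac{d\tau}{\sinh\tau}=\phi\,\frac{dr}{r}$, together with the values $P_{ss}(0)=0$ and $P_{sss}(0)=\frac34$ from Proposition \ref{p-SS-P}, to kill the lower-order terms. Once the correct $\tau(s)$ is pinned down, both the Wang normal form and the scalar curvature decay follow from the same expansion data, so the two halves of the proposition share essentially one computation.
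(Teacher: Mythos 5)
Your first stage is essentially the paper's own argument: the paper defines $\phi(s)=-\int_s^{s_0}\lambda^{-1}\lf(-(2P_\lambda+\lambda^2(1-2m\lambda)P_\lambda^2)\ri)^{\frac12}d\lambda+C$ and sets $\tau=2\,\mathrm{arctanh}(\frac12\exp\phi)$, which is exactly the solution of your ODE $\frac{d\tau}{\sinh\tau}=\phi\,\frac{dr}{r}$ written in the $s$-variable; it then checks $w^2(\tau)=s^{-2}\sinh^2\tau=1-\frac23(c-m)\tau^3+O(\tau^4)$ using $P_s(0)=-\frac12$, $P_{ss}(0)=0$, $P_{sss}(0)=\frac34$ from Proposition \ref{p-SS-P}, so the vanishing of the $\tau$ and $\tau^2$ coefficients that you flag as the main obstacle is handled exactly as you propose, and this half of your plan is sound.

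The second stage has genuine problems. First, $\Sigma$ is a \emph{three-dimensional} spacelike hypersurface (a graph over $(r_0,\infty)\times\mS^2$); your parenthetical ``$\Sigma$ is $2$-dimensional, so $\cS=2K$'' is wrong and contradicts the metric you yourself write on $(0,s_0^{-1})\times\mS^2$ — note $-6=-n(n-1)$ only for $n=3$, so a $2$-dimensional Gauss-curvature computation would produce $\cS\to-2$ and fail. Second, your radial coefficient is off: the induced metric is $G=(h^{-1}-h(f')^2)dr^2+r^2\sigma$ (no inverse), and $h^{-1}-h(f')^2=\lf(h(1+\ell^2)\ri)^{-1}$, not $(1+\ell^{-2})^{-1}h^{-1}$ (your expression is off by a factor $\ell^{-2}$); with $H=1$ this equals $(1+r^2-ar^{-1}+br^{-4})^{-1}$ with $a=2(m-c)$, $b=c^2$. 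Third, and most substantively, your proposed mechanism for the decay and the sign is incorrect: the metric deviates from the exact hyperboloid already at order $s^3$ (the $ar^{-3}$ term; equivalently $w^2=1-\frac23(c-m)\tau^3+\cdots$), so it is false that ``the correction to the flat profile starts at $O(s^6)$.'' The paper instead computes $\cS$ for the umbilical level spheres via the Gauss equation, $\cS=2r^{-2}-2r\p_r\lf((r\rho)^{-2}\ri)-6(r\rho)^{-2}$, and finds that all the $a$-dependence cancels identically, leaving $\cS+6=6br^{-6}=6c^2s^6$. Nonnegativity therefore holds because the coefficient is the perfect square $6c^2$ in the integration constant $c$ of the SSCMC family — it has nothing to do with $m\ge0$ (indeed $\cS+6$ is independent of $m$ and vanishes when $c=0$ even for $m>0$), so your heuristic tying the sign to the mass cannot be completed into a proof; only carrying out the $3$-dimensional curvature computation delivers both the $O(s^6)$ decay and the sign.
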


Before we prove the proposition, let us write down the induced metric on a spacelike surface $\Sigma$ near null-infinity. We use the coordinates $\mathbf{y}, s, \bar v$ for Schwarzschild spacetime as before. Suppose a spacelike surface is given as the zero set of $F=P+\bar v$, where $P=P(\mathbf{y}, s).$ Let $\ol g$ be the unphysical metric. Then
\begin{lma}\label{l-induced-metric} The induced metric of $\ol g$ on $\Sigma$ is
\bee
\begin{split}
\ol g|_\Sigma=&(\sigma_{AB}-s^2(1-2ms)P_AP_B)dy^A  dy^B-(2P_s+s^2(1-2ms)P_s^2)ds^2\\
&
-2(P_A+s^2(1-2ms)P_AP_s)dy^Ads
\end{split}
\eee
and
$$
g_{\mathrm{Sch}}|_{\Sigma}=s^{-2}\ol g|_\Sigma.
$$
Here $y^A$ are local coordinates of $\mS^2$ and $P_A=\frac{\p}{\p y^A}P$, etc.
\end{lma}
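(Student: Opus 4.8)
The plan is to realize $\Sigma$ as a graph over the coordinate chart $(y^1, y^2, s)$ on $\mS^2 \times (0, s_0)$ and simply pull back the product form of the unphysical metric recorded in \eqref{e-metric-null}. First I would use the defining relation \eqref{defofp}, namely $\bar v = -P(\mathbf{y}, s)$, to parametrize $\Sigma$ by the three coordinates $(y^1, y^2, s)$. Restricting the coordinate one-form $d\bar v$ to the surface then gives $d\bar v|_\Sigma = -dP = -(P_A\, dy^A + P_s\, ds)$, where $P_A = \p P/\p y^A$.

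The second step is the substitution. Recall from \eqref{e-metric-null} that
\bee
\ol g = \sigma_{AB}\, dy^A dy^B - s^2(1-2ms)\, d\bar v^2 + 2\, d\bar v\, ds.
\eee
Inserting $d\bar v|_\Sigma = -dP$ and expanding $(dP)^2 = P_A P_B\, dy^A dy^B + 2 P_A P_s\, dy^A ds + P_s^2\, ds^2$ reduces the computation to collecting the three types of terms. The $dy^A dy^B$ coefficient is $\sigma_{AB} - s^2(1-2ms)P_A P_B$; the $ds^2$ coefficient is $-2P_s - s^2(1-2ms)P_s^2$, the $-2P_s$ coming from the cross term $2\, d\bar v\, ds$; and the mixed $dy^A ds$ coefficient is $-2(P_A + s^2(1-2ms)P_A P_s)$. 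This is exactly the claimed expression for $\ol g|_\Sigma$.

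The final relation $g_{\mathrm{Sch}}|_\Sigma = s^{-2}\, \ol g|_\Sigma$ is immediate from the conformal identity $g_{\mathrm{Sch}} = s^{-2}\ol g$ of \eqref{e-metric-null}, since pulling back a conformal rescaling of the ambient metric rescales the induced metric by the same restricted factor. There is no serious obstacle here: the argument is a direct pullback, and the only points requiring care are the sign of $d\bar v|_\Sigma$, controlled by the convention $\bar v = -P$ rather than $+P$, and the bookkeeping of the symmetric cross terms — in particular ensuring that the factor of $2$ in the $2\, d\bar v\, ds$ term correctly produces both the $-2P_s$ in the $ds^2$ slot and the $-2P_A$ in the mixed slot.
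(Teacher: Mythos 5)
Your proposal is correct and is essentially the paper's own argument: the paper parametrizes $\Sigma$ by $(y^1,y^2,s)\mapsto(y^1,y^2,s,-P)$ and evaluates $\ol g$ on the coordinate frame $e_A=\p_A-P_A\p_{\bar v}$, $e_3=\p_s-P_s\p_{\bar v}$, which is precisely the dual formulation of your substitution $d\bar v|_\Sigma=-dP$ into \eqref{e-metric-null}. Your term-by-term expansion, including the sign from $\bar v=-P$ and the splitting of the $2\,d\bar v\,ds$ cross term, matches the stated coefficients, and the conformal relation $g_{\mathrm{Sch}}|_\Sigma=s^{-2}\ol g|_\Sigma$ follows just as you say.
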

\begin{proof} Denote the tangent vectors in Schwarzschild spacetime by $\p_a$, $1\le a\le 4$, where $\p_{A}=\p_{y^A}$ for $A=1, 2$ and $\p_3=\p_s$, $\p_4=\p_{\bar v}$. Then the tangent spaces are spanned by
\be\label{e-basis-Sigma}
\left\{
  \begin{array}{ll}
   e_1=    \p_1-P_1\p_4  \\
    e_2= \p_2-P_2\p_4  \\
   e_3= \p_3-P_3\p_4.
  \end{array}
\right.
\ee
Using the form of $\ol g$, one can see that the lemma is true.
\end{proof}
Note that $\Sigma$ is parametrized by $(y^1,y^2,s)\to (y^1, y^2, s, -P(y^1,y^2,s))$. So $\{e_i\}_{1\leq i\leq 3}$ are just coordinate frames for this parametrization.

Let us prove Proposition \ref{p-AH-SSCMC}.

\begin{proof}[Proof of Proposition \ref{p-AH-SSCMC}]
By Lemma \ref{l-induced-metric}, the induced metric on the SSCMC surface in the setting of the lemma is given by
$$
g_{\mathrm{Sch}}|_\Sigma=s^{-2}\lf(\sigma-(2P_s+s^2(1-2ms)P_s^2)ds^2\ri)
$$
where $P=P(f)$.
By Proposition \ref{p-SS-P}, at $s=0$, $P_s=-\frac12$. Hence $-(2P_s+s^2(1-2ms)P_s^2)\ge C>0$, near $s=0$. Fix $s_0>0$, let
$$
\phi(s)=-\int_s^{s_0}\lambda^{-1}\lf(-(2P_\lambda+\lambda^2(1-2m\lambda)P_\lambda^2\ri)^{\frac12}d\lambda+C
$$
where $C$ is to be determined. By Proposition \ref{p-SS-P}, we have
$$
\lf(-(2P_s+ {s}^2(1-2ms)P_s^2)\ri)^{\frac12}=1-\frac12 s^2-(c-m)s^3+O(s^4)
$$
Hence we may choose $C$ so that
$$
\phi(s)=\log s-\frac14s^2-\frac13(c-m)s^3+O(s^4).
$$
Let
$$
\tau(s)=2 \mathrm{arctanh}(\frac12\exp(\phi(s)))
$$
Then using the fact that $\mathrm{arctanh}(x)=x+ O(x^3)$ near $x=0$, we have
\be\label{e-tau}
\tau(s) =s\lf(1-\frac16 s^2-\frac13(c-m)s^3+O(s^4)\ri).
\ee
Hence $\tau(0)=0$ and $\tau(s)>0$ for small $s>0$.
Moreover
$$
(\sinh\tau)^{-1} \frac{d\tau}{ds}= s^{-1}\lf(-(2P_s+s^2(1-2ms)P_s^2)\ri)^{\frac12}.
$$
Hence

$$
g_{\mathrm{Sch}}|_\Sigma=(\sinh \tau)^{-2}\lf(d\tau^2+w^2(\tau)\sigma\ri).
$$
where
$$
w(\tau)=s^{-1}\sinh \tau.
$$
By \eqref{e-tau},
\bee
\begin{split}
w(\tau)=&\frac\tau s(1+\frac 16\tau^2)+O(\tau^4)\\
=&1-\frac13(c-m)s^3+O(\tau^4).
\end{split}
\eee
\bee
w^2(\tau)=1- \frac23(c-m)s^3+O(\tau^4).
\eee
So $w^2(0)=1$, $\frac{d}{d\tau}w^2|_{\tau=0}=0$, $\frac{d^2}{d \tau^2}w^2|_{\tau=0}=0$, and
\bee
\frac{d^3}{d \tau^3}w^2|_{\tau=0}=-4(c-m).
\eee
Hence
$$
w^2(\tau)= { 1}-\frac23(c-m)\tau^3+O(\tau^4).
$$
From this we can conclude that $\Sigma$ is strongly asymptotically hyperbolic.

To prove the assertion on the scalar curvature $\cS$ of $\Sigma$, it is more convenient to use the original coordinates $t, r, \mathbf{y}$ with $\mathbf{y} { \in \mathbb{S}^{2}}$. Then
\bee
g_{\mathrm{Sch}}=(1+r^2-ar^{-1}+br^{-4})^{-1}dr^2+r^2\sigma=:\rho^2dr^2+r^2\sigma
\eee
with $a=2(m-c)$, $b=c^2$.  The mean curvature $H$ of the level set $r$=constant with respect to the normal $\mathbf{n}=\rho^{-1}\p_r$ is given by
$$
H=2(r\rho)^{-1}.
$$
By the Gauss equation and the formula for $\p_{\mathbf{n}}H$, we have
$$
\cS=2r^{-2}-2\rho^{-1}\p_rH-H^2-|A|^2=2r^{-2}-2r\p_r((r\rho)^{-2})-6(r\rho)^{-2}.
$$
where $A$ is the second fundamental form of the level set, because the level set is umbilical. On the other hand,
$$
(r\rho)^{-2}=1+r^{-2}-ar^{-3}+br^{-6}.
$$
From this, it is easy to see that
\bee
\begin{split}
\cS
 =&2r^{-2}+4r^{-2}-6ar^{-3}+12b r^{-6}-6(1+r^{-2}-ar^{-3}+br^{-6})\\
 =&-6+6br^{-6}.
\end{split}
\eee
Thus the result follows.

\end{proof}

\begin{rem}\label{r-mass}
Direct computation shows that  the mass integral of the SSCMC surface is given by:
$(4(m-c),0,0).$  In particular, it is future directed if and only if $m>c$.

\end{rem}

\subsection{General spacelike CMC surfaces (I)}\label{ss-CMC-future-1}
We want to obtain similar results for general spacelike CMC surfaces. Let $\Sigma$ be a spacelike surface which is a graph of a function $P$ over $(\mathbf{y},s)$  as in the beginning of this section.  Namely, $\Sigma$  is the zero set of $F=P+\bar v$ and $P=P(\mathbf{y},s)$. Motivated by the results in the SSCMC case, in this subsection, we discuss the conditions for $P$ being smooth at some open set containing a point $(  \mathbf{y},0 )\in \mathcal{I}^+ $ near the null-infinity. We want to find the relation between $\p_s^kP$ at $s=0$ and the data $P( \mathbf{y},0)$ at $s=0$.
Let $H>0$ be the mean curvature of $\Sigma$ with respect to the future directed unit normal.
We do not assume $H$ to be constant, because in this setting the results could be applied to spacelike CMC surfaces in spacetimes which are close to Schwarzschild spacetime near the null-infinity. Such a surface will not have constant mean curvature in the Schwarzschild spacetime. But it will have constant mean curvature at $s=0$ up to certain order.

 In the following, for any $k>0$, use $\phi=\mathcal{O}(s^k)$ to mean that it is a function on $\Sigma$ so that $s^{-k}\phi$ can be smoothly extended to $s=0$. We also denote a function with this property by $\mathcal{O}(s^k)$. As before, $L=-\la \nabla F,\nabla F\ra$.

\begin{thm}\label{t-smooth-future} Assume $P$ and $H$ are $C^4$ up to $s=0$, and let $H_0=H|_{s=0}$. Then the following are true.
\begin{enumerate}
  \item[(a)] At $s=0$, $L=0$ or $H_0^{-2}$.

  \item [(b)] Assume $L= H_0^{-2}>0$ at $s=0$. Then

\begin{enumerate}
  \item [(i)] At $s=0$, $P_s=-\frac12(  H_{0}^{-2} +|\wn P|^2)$.
  \item[(ii)] $H=H_0+\mathcal{O}(s^2)$  if and only if $L_s=-H_0^{-2}\lf(\wt\Delta P+H_0^{-1}\la \wn H_0,\wn P\ra\ri)$ at $s=0$,  which is equivalent to
 \bee
P_{ss}=\frac12 H_0^{-2}\wt \Delta P+\frac12\la\wn|\wn P|^2,\wn P\ra+ \frac12H_0^{-3}\la \wn H_0, \wn P\ra.
\eee
\item[(iii)] Suppose $H=H_0+ \mathcal{O}(s^2)$, then $H=H_0+\mathcal{O}(s^3)$  if and only if
$$
L_{ss}
=-\frac92 H_0^{-2} L_s^2-4(L_s\wt\Delta P+ L\wt\Delta P_s)  +2 \la \wn L_s,\wn P\ra+2\la \wn (H_0^{-2}),\wn P_s\ra+4H_0^{-2}P_s
$$
at $s=0$,
which is equivalent to
\bee
\begin{split}
P_{sss}=&\frac94 H_0^{-2} L_s^2+2(L_s\wt\Delta P+ \wt\Delta P_s)  -  \la \wn L_s,\wn P\ra-\la \wn (H_0^{-2}),\wn P_s\ra-2H_0^{-2}P_s\\
&-\la \wn P_{ss}, \wn P\ra- \la \wn P_s,\wn P_s\ra -  P_s^2 .
\end{split}
\eee
where $L_s, P_s, P_{ss}$ are given by (b)(i)(ii) which can be expressed in terms of $H$ and the boundary value of $P$ at $s=0$ and their  derivatives as  functions in $\mS^2$.

\end{enumerate}

\item[(c)] Assume $L=   H_0^{-2}$ and $H=H_0+\mathcal{O}(s^3)$. Then the boundary value of $P$  at $s=0$ satisfies
  the following compatibility condition at $s=0$,
\bee
\begin{split}
0=& -2H_0^{-3}H_{sss}+ \frac{9}2  H_0^{-1}\la \wn H_0,\wn P\ra  L_{ss}+ \frac{5}2  L_{ss}\wt\Delta P+ \frac34  H_0^{4} L_s^3
   -4L_s\wt\Delta P_s-2H_0^{-2}\wt\Delta P_{ss}\\
&+6L_sP_s+ \la\wn L_{ss},\wn P\ra+2\la\wn L_s,\wn P_s\ra+\la \wn (H_0^{-2}),\wn P_{ss}\ra
\end{split}
\eee
 where $P_s, L_s, P_{ss}$, and $L_{ss}$ are given by (b)(i)--(iii), which can be expressed in terms of $H$ and the boundary value of $P$ at $s=0$ and their derivatives as functions on $\mS^2$.

\item[(d)] Assume $P$ and $H>0$ is smooth up to $s=0$.  Suppose $L=  H_0^{-2}  $ at $s=0$,   then at $s=0$, $\p_s^kP$ can be expressed in terms of $P$, $P_{ssss}$ and their derivatives with respect to $\mathbf{y}$ at $s=0$, and $H$, $\p^{k-1}_sH$ at $s=0$.
\end{enumerate}

\end{thm}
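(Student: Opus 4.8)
The plan is to read the mean curvature equation \eqref{e-CMC-null-1} of Lemma \ref{l-equation} as a single relation $\mathcal{E}(P)=0$ and to run a formal series (indicial) analysis in the degenerate direction $s$. First I would collect every term in \eqref{e-CMC-null-1} that contains the top $s$-derivative $P_{ss}$ (these sit in the explicit $s^2(1-2ms)P_{ss}$ term and inside $L_s$), and show that their total coefficient is exactly $-sQ$ with
\be
Q=1-s^2(1-2ms)|\wn P|^2-s^4(1-2ms)^2P_s^2 ,
\ee
so that $Q|_{s=0}=1$. Since $L|_{s=0}=H_0^{-2}>0$ by hypothesis, $L>0$ and $L^{\frac32}$ is smooth near $s=0$, and the equation takes the normal form $\mathcal{E}=-sQP_{ss}+\Psi=0$, where $\Psi$ is a smooth function of $s$, of $H$, and of $P, P_s$ together with their $\wn$-derivatives up to second order — crucially, $\Psi$ contains no $P_{ss}$. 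The point of this normal form is that the coefficient of the highest $s$-derivative vanishes to first order at $s=0$; this is the source of the degeneracy already visible in parts (a)--(c).

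The heart of the argument is an indicial computation. Differentiating $\mathcal{E}=0$ exactly $n\ge1$ times in $s$ and restricting to $s=0$, I would track only the single top term $\p_s^{n+1}P$ appearing without any $\wn$-derivative on it. From $-sQP_{ss}$ the Leibniz rule gives $-nQ|_{s=0}\,\p_s^{n+1}P=-n\,\p_s^{n+1}P$. From $\Psi$, every term except $-3HL^{\frac32}$ and $3L$ either carries an explicit factor of $s$ or involves $P_s$ only through $\wn P_s$; after applying $\p_s^n$ and setting $s=0$, such terms contribute at most $\wn$-derivatives of $\p_s^{\le n}P$, hence nothing of the form $\p_s^{n+1}P$. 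The two surviving terms contribute, using $L^{\frac12}|_{s=0}=H_0^{-1}$, coefficients $+9$ and $-6$ respectively. Consequently the coefficient of $\p_s^{n+1}P|_{s=0}$ in $\p_s^n\mathcal{E}|_{s=0}$ equals $3-n$, while all remaining terms involve only $\p_s^{\le n}P|_{s=0}$, their $\wn$-derivatives, and $\p_s^{\le n}H|_{s=0}$.

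With the indicial coefficient $3-n$ in hand the conclusion follows by induction on $k=n+1$. The cases $k=1,2,3$ (i.e. $n=0,1,2$, coefficients $3,2,1$) are exactly (a) and (b)(i)--(iii); the case $k=4$ ($n=3$, coefficient $0$) is the degenerate order where \eqref{e-CMC-null-1} produces not a determination of $P_{ssss}$ but the compatibility condition (c), so $P_{ssss}|_{s=0}$ is left as free data. For every $k\ge5$ we have $n=k-1\ge4$ and $3-n=4-k\neq0$, so the equation $\p_s^{k-1}\mathcal{E}|_{s=0}=0$ can be solved algebraically for $\p_s^kP|_{s=0}$ in terms of lower data. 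By the inductive hypothesis those lower data are expressible through $f=P|_{s=0}$, $P_{ssss}|_{s=0}$ and their $\wn$-derivatives, together with $H,\dots,\p_s^{k-1}H$ at $s=0$; since determining $\p_s^kP$ uses only $\p_s^{\le k-1}H$, the statement of (d) is reproduced verbatim. This is the exact analogue, for general CMC graphs, of the last assertion of Proposition \ref{p-SS-P}.

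The main obstacle I expect is the bookkeeping in the indicial computation — not its difficulty but its delicacy. Two points must be checked with care: that the coefficient of $P_{ss}$ vanishes to precisely first order (the factor $-sQ$ with $Q|_{s=0}=1$, rather than to higher order, which would shift the critical order away from $k=4$), and that every term capable of producing $\p_s^{n+1}P$ with a $\wn$-derivative on it in fact carries a compensating factor of $s$, so that the top derivative enters $\p_s^n\mathcal{E}|_{s=0}$ algebraically rather than through an operator like $\wt\Delta$ on $\mS^2$. It is exactly this algebraic (pointwise on $\mS^2$) appearance of $\p_s^kP|_{s=0}$, valid because $3-n\neq0$ for $n\neq3$, that lets one solve order by order; were a $\wt\Delta\p_s^{k}P$ term to survive, (d) would have to be recast as a sequence of elliptic problems on $\mS^2$ rather than explicit formulas.
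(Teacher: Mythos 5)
Your indicial scheme is, at its core, the same computation the paper runs: the paper (after squaring the equation, which is only an algebraic convenience since $L>0$ near $s=0$) obtains $(j-3)\,\p_s^jL=Q^{(j)}+R^{(j)}$ at $s=0$ with $\p_s^jL=-2\p_s^{j+1}P+Q^{(j)}$, which is exactly your coefficient $3-n$ for $\p_s^{n+1}P$, with the degeneracy at fourth order leaving $P_{ssss}|_{s=0}$ as free data. So your treatment of (a), (b)(i)--(iii) and (d) is correct in outline and matches the paper's route. One small algebraic slip: the total coefficient of $P_{ss}$ in \eqref{e-CMC-null-1} is $s\lf(1-s^2(1-2ms)|\wn P|^2\ri)$ — the $s^4(1-2ms)^2P_s^2$ contributions from $s^3(1-2ms)L$ and from $s\lf(1+s^2(1-2ms)P_s\ri)^2$ cancel, so your extra term in $Q$ should not be there; this is harmless since only $Q|_{s=0}=1$ and the exact first-order vanishing are used.

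The genuine gap is in part (c), where the theorem assumes only that $P$ and $H$ are $C^4$ up to $s=0$. Your step ``after applying $\p_s^n$ and setting $s=0$, terms carrying an explicit factor of $s$ contribute nothing'' is not justified at $n=3$: the third $s$-derivative of the equation contains fifth-order quantities, namely terms of the form $sL\lf(s^2(1-2ms)P_{sssss}+\wt\Delta P_{sss}\ri)$, $s\,L_{ssss}$ and $s\la\wn L_{sss},\wn P\ra$, and discarding them at $s=0$ requires these fifth derivatives to be $o(s^{-1})$, which does not follow from $C^4$ regularity (a $C^4$ function, smooth for $s>0$, can have $\limsup_{s\to0}|s\,\p_s^5P|>0$; think of a fourth derivative like $\e(s)\sin(1/s)$ with $s\ll\e(s)\to0$). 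The paper's proof devotes its ``Claim'' to precisely this point: it splits $\p_s^3\mathbf{RHS}=\mathrm{I}+\mathrm{II}$ with $\mathrm{I}$ the fifth-order block carrying the factor $s$, then uses continuity of the fourth-order quantity $f(s)$ together with the mean value theorem to extract a sequence $s_i\to0$ along which $\mathrm{I}(\mathbf{y},s_i)\to0$, and passes to the limit only along $s_i$ (legitimate because every other term is continuous up to $s=0$). Without this device — or without strengthening the hypothesis to $C^5$ — your derivation of the compatibility condition does not go through at the stated regularity; with it inserted, your argument coincides with the paper's, including the observation that the $L_{sss}$ (equivalently $P_{ssss}$) terms cancel at the critical order.
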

\begin{proof}

Let $s=0$ in  \eqref{e-CMC-null-1}, then we have
$$
9  H_0^{2}  L^3=9L^2
$$
and (a) follows.

To prove (b), differentiating $L$ with respect to $s$, we have at $s=0$
\be\label{e-Ls}
\left\{
  \begin{array}{ll}
    L_s= -2P_{ss}-2\la\wn P_s,\wn P\ra;\\
L_{ss}= - \lf (  2P_{sss}+2\la \wn P_{ss}, \wn P\ra+2\la \wn P_s,\wn P_s\ra +2 P_s^2\ri );\\
L_{sss}=- \lf(2P_{ssss}+2\la \wn P_{sss},\wn P\ra+6\la\wn P_{ss},\wn P_s\ra+12P_sP_{ss}-12mP_s^2 \ri).
  \end{array}
\right.
\ee
This give relation between $L_s$ and $P_{ss}$, etc.

Assume $L=H_0^{-2}>0$ at $s=0$. Then at $s=0$, we have
$$
P_s=-\frac12(L+|\wn P|^2)=-\frac12(H^{-2}_0+|\wn P|^2).
$$
This proves (b)(i).
Differentiate \eqref{e-CMC-null-1} once and let $s=0$.
We have
\bee
-3H_sL^\frac32-\frac92H_0L^\frac12 L_s=L\wt\Delta P-\frac12\lf(L_s+\la\wn L,\wn P\ra\ri)-3L_s
\eee

So
\bee
L_s=-H_0^{-2}\wt\Delta P-H_0^{-3}\la \wn H_0,\wn P\ra-3H_sL^\frac32.
\eee
Hence $H=H_0+\mathcal{O}(s^2)$ if and only if
$$
L_s=-H_0^{-2}\wt\Delta P-H_0^{-3}\la \wn H,\wn P\ra.
$$
Combining with \eqref{e-Ls}, we conclude that (b)(ii) is true.

By $H=H_0+\mathcal{O}(s^2)$, then $H_s=0$.  Differentiating \eqref{e-CMC-null-1} twice with respect to $s$ and let $s=0$, and using $L=H_0^{-2}$, we have
\bee
\begin{split}
&-3H_{ss}L^\frac32-\frac 94\lf(  H_0^{2}L_s^2+2L_{ss}\ri)\\=&2(L_s\wt\Delta P+ L\wt\Delta P_s)-L_{ss}-   \la \wn L_s,\wn P\ra-\la\wn L,\wn P_s\ra -2H_0^{-2}P_s-3L_{ss}.
\end{split}
\eee
So
\bee\label{e-2nd-order}
-\frac12 L_{ss}=\frac94 H_0^{-2} L_s^2+2(L_s\wt\Delta P+ \wt\Delta P_s)  -  \la \wn L_s,\wn P\ra-\la \wn L,\wn P_s\ra-2H_0^{-2}P_s+3H_{ss}L^\frac32.
\eee
From this and by \eqref{e-Ls}, we conclude that (b)(iii) is true.

To prove (c), assume $H=H_0+\mathcal{O}(s^3)$ so that $H_s=H_{ss}=0$ at $s=0$. We want to prove that at any point $\mathbf{y}\in \mS^2$, the equality in the statement must be  true at $(\mathbf{y},0)$. So fixed $\mathbf{y}\in \mS^{2}$. Differentiate \eqref{e-CMC-null-1} three times with respect to $s$, we have
\bee
\begin{split}
\p_s^3\mathbf{LHS}\to -3H_{sss}L^\frac32-\frac92 (L_{sss}+\frac32 H_0^{2} L_sL_{ss}-\frac14  H_{0}^{4} L_s^3 )
\end{split}
\eee
as $s\to 0$, where the limiting function is evaluated at $(\mathbf{y},0)$. The limit exists, because each term involves the derivatives of $P$ up to at most fourth order. On the other hand, at $(\mathbf{y},s)$
\bee
\begin{split}
\p_s^3\mathbf{RHS}=\mathrm{I}+\mathrm{II}
\end{split}
\eee
where $\mathrm{II}$ involves only derivatives of $P$ up to at most fourth order and $\mathrm{I}$ may involve derivatives up to fifth order. More precisely,
\bee
\begin{split}
\mathrm{I}=&sL\lf(s^2(1-2ms)P_{sssss}+\wt\Delta P_{sss}\ri)\\
& -\frac12 s\lf((1+s^2(1-2ms)P_s)L_{ssss} +\la\wn L_{sss},\wn P\ra\ri) .
\end{split}
\eee
and
\bee
\mathrm{II}\to 6LP_{ss}+3(L\wt\Delta P)_{ss}-\frac32 L_{sss}-3L_sP_s-\frac32(\la\wn L,\wn P\ra)_{ss}-6(LP_s)_s-3L_{sss}
\eee
as $s\to 0$, where the limiting function is evaluated at $(\mathbf{y},0)$. Again, the limit exists, because each term involves the derivatives of $P$ up to at most fourth order.
\vskip .1cm

{\it Claim}: We can find $s_i\to 0$ such that
\bee
\mathrm{I}(\mathbf{y},s_i)\to 0.
\eee

\vskip .1cm

Suppose the claim is true. Then we can choose such $s_i$ and let $s_i\to 0$ to conclude that
at $(\mathbf{y},0)$:
\bee
\begin{split}
&-3H_{sss}L^\frac32-\frac92 (L_{sss}+\frac32 H_0^{2} L_sL_{ss}-\frac14  H_{0}^{4} L_s^3 )\\
=&6LP_{ss}+3(L\wt\Delta P)_{ss}-\frac32 L_{sss}-3L_sP_s-\frac32(\la\wn L,\wn P\ra)_{ss}-6(LP_s)_s-3L_{sss}\\
\end{split}
\eee
Note that the term $L_{sss}$ will be cancelled out. Hence we have
\bee
\begin{split}
0=& 3H_0^{-3}H_{sss} +\frac{27}4  H_0^{2} L_sL_{ss}-\frac98  H_0^{4} L_s^3  +3L_{ss}\wt\Delta P\\
&+6L_s\wt\Delta P_s+3H_0^{-2}\wt\Delta P_{ss}-9L_sP_s\\
&-\frac32\lf(\la\wn L_{ss},\wn P\ra+2\la\wn L_s,\wn P_s\ra+\la \wn (H_0^{-2}),\wn P_{ss}\ra\ri)
\end{split}
\eee
Combing with (b)(ii), the result (c) follows.

To prove the claim, for fixed $\mathbf{y}\in \mS^{2}$, let
\bee
\begin{split}
f(s)=& L\lf(s^2(1-2ms)P_{ssss}+\wt\Delta P_{ss}\ri)\\
& -\frac12  \lf((1+s^2(1-2ms)P_s)L_{sss} +\la\wn L_{ss},\wn P\ra\ri) .
\end{split}
\eee
Then $f(s) $ is continuous at $s=0$, because $P$ is $C^4$. Hence
$$
f(s)-f(\frac s2)=o(1),
$$
as $s\to 0$. Therefore, there exists $\xi\in (\frac s2,s)$, so that
$$
f_s(\xi)\cdot \frac s2=o(1),
$$
as $s\to 0$. This implies that $f_s(\xi)\cdot \xi=o(1)$ as $s\to 0$. Hence there exists a sequence $s_i\to 0$, such that
$$
f_s(s_i)\cdot s_i\to 0.
$$
But
\bee
\begin{split}
s_if_s(s_i)
=&\mathrm{I}(\mathbf{y},s_i)+s_iQ(\mathbf{y},s_i) .
\end{split}
\eee
where $Q$ involves derivatives of $P$ only up to fourth order. Since $P$ is $C^4$, it is easy to see that the claim is true.

To prove (d), let us take the square of both sides of   \eqref{e-CMC-null-1} and rewrite it as follows:
\be\label{e-CMC-null-2}
\begin{split}
9H^2L^3
=&9L^2+3sLL_s-6s L \sum_{k=1}^4a_ks^{k-1}+s^2 (\frac12L_s- \sum_{k=1}^4a_ks^{k-1} )^2
\end{split}
\ee
where $L=-\lf(2P_s+s^2(1-2ms)P_s^2+|\wn P|^2\ri)$ and
\bee
\left\{
  \begin{array}{ll}
    a_1= L\wt \Delta P-\frac12\la\wn L,\wn P\ra;\\
a_2=-LP_s;\\
a_3=LP_{ss}-\frac12L_sP_s;\\
a_4=-2mLP_{ss}+mL_sP_s
  \end{array}
\right.
\eee
First note that
\be\label{e-L-11}\p_s^jL=-2\p_s^{j+1}P+Q^{(j)},
 \ee
here and below $Q^{(j)}$ will denote a term involving only $\p_s^{i}P$ and their derivatives with respect to $\mathbf{y}$ for $0\le i\le j$. It may vary from line to line.

Suppose $L={  H_0^{-2} }$ at $s=0$.
For $j\ge 1$, differentiate \eqref{e-CMC-null-2} $j$-times   with respect to $s$, and let $s=0$, then
\bee
\p_s^j\lf(\mathbf{LHS}\ri)=27 H_0^{-2} \p_s^j L +Q^{(j)}+R^{(j)}.
\eee
here and below, $R^{(j)}$ will denote a term involving the $\p_s^{i}H$ at $s=0$, with $0\le i\le j$, and $\p_s^{i}P$ and their derivatives with respect to $\mathbf{y}$ for $0\le i\le j$.
\bee
\p_s^j\lf(\mathbf{RHS}\ri)=18{  H_0^{-2}}\p_s^jL+3j{  H^{-2} }\p_s^jL+Q^{(j)}.
\eee
Hence
\bee
(j-3) \p_s^jL=Q^{(j)}+R^{(j)}.
\eee
From the proof of part (b), it is easy to see that (d) is true for $j=0, 1, 2, 3$ with $\p_s^jP$ depends only on $P$, and $\p_s^iH$, for $0\le i\le j$.
If $\p_s^4P$ is known for $s=0$, then one can see from the above that (d) is true by induction.

\end{proof}

The condition that $L={   H_0^{-2} }$ in Theorem \ref{t-smooth-future} will be satisfied in the most interesting case. Namely, let $\Sigma$ be a spacelike surface with positive mean curvature $H$ defined near the future null-infinity as a graph of $u=u(\mathbf{y}, r)$ with $\mathbf{y}\in {{\mathbb{S}^2}}$. Let $P(u)=r_*-u$. Then the surface is given by $P(u)+\bar v=0$ with respect to the coordinates of the compactified spacetime. We have the following.
\begin{prop}\label{p-L=1} With the same notations as in Theorem \ref{t-smooth-future},
suppose $P(u)$ is smooth near the whole $\mathbb{S}^2\times\{0\}$ and $H>0$ is smooth up to $s=0$. Then $L\equiv  H_0^{-2}$ at $s=0$.
\end{prop}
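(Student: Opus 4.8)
The plan is to upgrade the pointwise dichotomy of Theorem \ref{t-smooth-future}(a) to a global statement by a connectedness argument, and then to rule out the degenerate alternative. First I would record that, since $\Sigma$ is spacelike, $\nabla F$ is timelike for $g_{\mathrm{Sch}}$ and hence for the conformal metric $\ol g$, so that $L=-\la\nabla F,\nabla F\ra_{\ol g}>0$ for every $s>0$. By \eqref{Ldef} and the assumed smoothness of $P=P(u)$ on a neighborhood of the whole $\mS^2\times\{0\}$, the function $L$ extends smoothly to $s=0$, where $L|_{s=0}$ becomes a continuous function on all of $\mS^2$. By Theorem \ref{t-smooth-future}(a), at each point of $\mS^2$ we have $L|_{s=0}\in\{0,H_0^{-2}\}$, with $H_0=H|_{s=0}>0$ continuous.

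Next I would run the connectedness argument. The sets $\{L|_{s=0}=0\}$ and $\{L|_{s=0}=H_0^{-2}\}$ are both relatively closed in $\mS^2$ (preimages of $0$ under the continuous maps $L|_{s=0}$ and $L|_{s=0}-H_0^{-2}$), they are disjoint because $H_0^{-2}>0$, and they cover $\mS^2$. Since $\mS^2$ is connected, one of them is empty; thus either $L\equiv H_0^{-2}$ on $\mS^2\times\{0\}$, which is the desired conclusion, or $L\equiv 0$ there. It remains to exclude the latter.

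To exclude $L\equiv 0$ I would work with the squared equation \eqref{e-CMC-null-2}, which is polynomial in $L$, $L_s$ and the coefficients $a_k$. The key simplification is that $L\equiv0$ on $\mS^2\times\{0\}$ forces the tangential gradient $\wn L|_{s=0}=\wn(L|_{s=0})=0$; combined with $L|_{s=0}=0$ this makes all the $a_k$ vanish at $s=0$, and differentiating \eqref{e-CMC-null-1} once gives $L_s|_{s=0}=0$ as well. More systematically, suppose $L$ vanishes to some finite order $k\ge1$, say $L=c\,s^k+O(s^{k+1})$ with $c>0$ (recall $L>0$ for $s>0$). Balancing the terms of order $s^{2k}$ in \eqref{e-CMC-null-2}, where the left side is $O(s^{3k})$ and hence negligible while $9L^2$, $3sLL_s$ and $s^2(\tfrac12 L_s)^2$ contribute, yields
\begin{equation*}
0=\Big(9+3k+\tfrac14 k^2\Big)c^2=\tfrac14 (k+6)^2 c^2,
\end{equation*}
which is impossible for $k\ge1$. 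Hence $L$ can vanish to no finite order, i.e. $L$ must be flat at $s=0$.

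For the remaining flat case I would return to \eqref{e-CMC-null-1}, solved for $H$ in the form $3HL^{3/2}=3L-sLB+\tfrac12 sC+s^2LP_s$, where $B$ and $C$ are the bracketed expressions in \eqref{e-CMC-null-1}. Dividing instead \eqref{e-CMC-null-2} by $L^2$ gives, for $s>0$, a relation whose left side $9H^2L$ tends to $0$ while its right side equals $9+3s\,L_s/L-6s\big(\textstyle\sum_k a_ks^{k-1}\big)/L+s^2\big(\tfrac12 L_s-\sum_k a_ks^{k-1}\big)^2/L^2$; since $L_s\ge0$ near $s=0$ and the last term is nonnegative, the term $9$ can only be cancelled through the $a_k$-contributions, equivalently $H\sim L^{-1/2}\to\infty$, contradicting $H\to H_0<\infty$. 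I expect this last step to be the main obstacle: the coefficients $a_1$ and $C$ contain the tangential term $\la\wn L,\wn P\ra$, and although $\wn L|_{s=0}=0$, for a flat $L$ the quotients $\la\wn L,\wn P\ra/L$ and $sL_s/L$ at high radial order need not remain bounded a priori. The heart of the argument is therefore to control these quotients and show that $L$ cannot be flat, using the structure of the CMC equation in which the source $3H_0L^{3/2}$ must balance the undifferentiated term $3L$; the explicit SSCMC solutions of \S\ref{ss-SSCMC}, for which this balance makes $L\to H_0^{-2}$ exactly (see Proposition \ref{p-SS-P}), indicate that this balance is sharp and that flatness does not occur.
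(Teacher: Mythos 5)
Your opening steps (extending $L$ smoothly to $s=0$, the pointwise dichotomy from Theorem \ref{t-smooth-future}(a), and the connectedness argument splitting $\mS^2$ into the two closed disjoint sets $\{L|_{s=0}=0\}$ and $\{L|_{s=0}=H_0^{-2}\}$) match the paper's reduction: everything comes down to excluding $L\equiv 0$ at $s=0$. Your finite-order balancing is nearly right but, as written, has a flaw: the vanishing order of $s\mapsto L(\mathbf{y},s)$ can depend on $\mathbf{y}$, and at a fixed point $\mathbf{y}_0$ the tangential term $\la\wn L,\wn P\ra$ may vanish to much lower order in $s$ than $L(\mathbf{y}_0,\cdot)$ does (take $L=s^2g(\mathbf{y})+s^{10}$ with $g(\mathbf{y}_0)=0$, $\wn g(\mathbf{y}_0)\neq0$: then $s^2a_1^2$ contributes at order $s^6$ and swamps $9L^2=9s^{20}$), so the three terms you balance need not be the dominant ones. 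This is repairable: let $k$ be the least integer with $\p_s^kL|_{s=0}\not\equiv 0$ and evaluate at a maximum point of $c_k(\mathbf{y})=\frac1{k!}\p_s^kL(\mathbf{y},0)\ge 0$; there $\wn c_k=0$, all $\la\wn L,\wn P\ra$ contributions enter at order $\ge s^{2k+1}$, and your identity $\frac14(k+6)^2c_k^2=0$ does give a contradiction.

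The genuine gap is the flat case, which you yourself concede you cannot close, and your partial argument for it does not work: $L_s\ge 0$ near $s=0$ is unjustified (a flat positive function such as $e^{-1/s}(2+\sin(1/s))$ has derivative of both signs arbitrarily close to $s=0$), and the quotients $sL_s/L$ and $\la\wn L,\wn P\ra/L$ are exactly the uncontrollable objects, so the division of \eqref{e-CMC-null-2} by $L^2$ yields no contradiction. The paper never analyzes how $L$ vanishes; instead it proves directly that $L>0$ at one point of $\mS^2\times\{0\}$, which by your own connectedness step suffices. It takes $p$ where $P(u)|_{s=0}$ attains its maximum $a$ (so $\wn P(u)(p)=0$ and $L(u)(p)=-2P_s(u)(p)$) and builds a Lee--Lee SSCMC barrier: a radial graph $f$ with constant mean curvature $H_1>H$ on $0\le s\le s_0$, where choosing the parameter $c$ in $\ell=h^{-1/2}(H_1r+cr^{-2})$ large makes the total variation $P(f)(0)-P(f)(s_0)$ smaller than $\frac12(a-b)$ with $b=\max_{\mathbf{y}}P(u)(\mathbf{y},s_0)<a$; normalizing $P(f)(0)=a$ gives $P(f)\ge P(u)$ on $\{s=0\}\cup\{s=s_0\}$, and the comparison principle for the elliptic mean curvature operator (with $H_1>H$) yields $P(f)\ge P(u)$ in between, with equality at $p$. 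Hence $P_s(f)(p)\ge P_s(u)(p)$, so $L(u)(p)=-2P_s(u)(p)\ge -2P_s(f)=L(f)=H_1^{-2}>0$, using Proposition \ref{p-SS-P}. This barrier construction (or some complete substitute for the flat case) is the missing ingredient; without it your proposal does not prove the proposition.
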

\begin{proof} By Theorem \ref{t-smooth-future}(a), at each point we have either $L=0$ or $L=H_0^{-2}$ at $s=0$. Since $H_0>0$ and is continuous at $s=0$, $H_0\ge C>0$ on $\mathbb{S}^2\times\{0\}$. Hence $L\equiv0$ at $s=0$ or $L \equiv H_0^{-2}$ at $s=0$. To prove the proposition, it is sufficient to prove that $L>0$ at a some point in $\mathbb{S}^2\times\{0\}$. In order to prove this, we use the spacelike SSCMC surfaces constructed in \cite{LL} as barriers. To be precise, suppose we can find a SSCMC surface with constant mean curvature $H_1>0$ given as a graph of some function $f$. By \S\ref{ss-SSCMC}, $P(f)$ is smooth up to $s=0$ and $L(f)=-2P_s(f)-|\wn P(f)|^2=-2P_s(f)= H_1^{-2}.$ We denote $L$ by $L(u)$ to be more precise.  Suppose  $f\le u$ near $s=0$ and $f=u$ at some point $p$ of $\mathbb{S}^2\times\{0\}$ so that $\wn P(u)=0$ at this point. Then $P(f)\ge P(u)$ near $s=0$ and $P(f)(p)=P(u)(p)$. Hence at $p$, we have $P_s(f)\ge P_s(u)$ and so at this point:
$$
H_1^{-2} =L(f)\le -2P_s(u)=L(u)+|\wn P(u)|^2=L(u).
$$
From this the result follows. Hence it remains to find a suitable barrier function and a suitable point $p$.

Let $p\in  \mathbb{S}^2\times\{0\}$ such that
$$
P(u)(p)=\max_{s= 0 }P(u)=:a.
$$
Then $\wn P(u)=0$ at $p$.
Next let us find a suitable spacelike SSCMC surface as barrier. Since $H>0$ and smooth up to $s=0$, then there exists $s_0>0$  and $H_1>0$ such that $H_1>H$ in $0\le s\le s_0$.
Consider the spacelike SSCMC surface which is the graph of $f=f(r)$ such that (see \cite{LL})

$$
f'=f_r=\frac{\ell}{h\sqrt{1+\ell^2}}; \ \ \ell=\frac1{\sqrt h}( {  H_1}r+cr^{-2})
$$
 for  $c>-8m^3{  H_1}$  to be chosen. The graph of $f$ has constant mean curvature $H_1$. Here as before $h=1-\frac{2m}r$. Let $r_0=s_0^{-1}$. Then
\bee
\begin{split}
P(f)(0) -P(f)(s_0)=&\int_{r_0}^\infty P'(f) dr\\
=&\int_{r_1}^\infty    \frac1{({  H_1}r+cr^{-2})^2}\frac1{\sqrt{1+\ell^{-2}}(1+\sqrt{1+\ell^{-2}})}dr\\
\le &\int_{r_0}^\infty \frac1{({  H_1}r+cr^{-2})^2}dr\\
\le &\int_{r_0}^{r_1}\frac1{({  H_1}r+cr^{-2})^2}dr+\int_{{ r_1}}^{\infty}\frac1{r^2}dr\\
=&\int_{r_0}^{r_1}\frac1{({  H_1}r+cr^{-2})^2}dr+\frac1{r_1}.
\end{split}
\eee
 for any $r_1>r_0$.
 For any $\e>0$, we can choose $r_1>r_0$ large enough so that $1/r_1<\e$. Then choose $c$ large enough so that the first integral is less than $\e$. Hence for any $\e>0$, we can choose $c>>1$ such that
 $$
 P(f)(0)-P(f)(s_0)\le \e.
 $$
Let $b=\max_{\mathbf{y}\in \mS^2}P(u)(\mathbf{y},s_0)=P(u)(\mathbf{y}_0,s_0).$ Since $\Sigma$ is spacelike, we have $P_s(u)>0$. Hence
$$
b=P(u)(\mathbf{y}_0,s_0)<P(u)(\mathbf{y}_0,0)\le a.
$$
Choose $c>>1$, we can find $f$ such that
$$
P(f)(0)-P(f)(s_0)\le \frac12(a-b)
$$
and $P(f)(0)=a=P(u)(p)$ by adding a constant to $f$ if necessary. By our construction, we have
\bee
\left\{
  \begin{array}{ll}
    P(f)(0)=a\ge P(u)(\mathbf{y},0);\\
P(f)(s_0)>b\ge P(u)(\mathbf{y},s_0)
  \end{array}
\right.
\eee
for all $\mathbf{y}\in \mS^2$, because
$$
P(f)(s_0)\ge P(f)(0)-\frac12(a-b)=\frac12(a+b)>b.
$$
Moreover, the graph of $f$ has mean curvature $H_1>H$ for $0<s<s_0$. By the definition of $P(u), P(f)$, we have $u\ge f$ both at $s=0, s=s_0$. Since the mean curvature equation in the region $r>2m$ is elliptic for spacelike surfaces, we conclude that $u\ge f$ for $0<s<s_0$.
This completes the proof of the proposition.

\end{proof}
  In case $H_0$ is constant, the results are more clean. Combining with Theorem \ref{t-smooth-future} and the notations as in the theorem, we have:

\begin{cor}\label{c-smooth-null} Assume $H=H_0+\mathcal{O}(s^3)$, where $H_0>0$ is a constant. Suppose $\Sigma$ is a spacelike CMC surface given as the graph of $P$ as in Theorem \ref{t-smooth-future} and $P$ is $C^{4}$ up to $s=0$. Then at $s=0$, $L=H_0^{-2}$, and
\be \label{e-smooth-2}
\left\{
  \begin{array}{ll}
  P_s=-\frac12({  H_0^{-2} }+|\wn P|^2),\\
  L_{s}= -H_0^{-2}\wt\Delta P,\\
  P_{ss}=  \frac{1}{2}\la\wn ( |\wt\nabla P|^2 ),\wn P\ra+ \frac{1}{2}H_0^{-2}\wt\Delta P ,\\
  L_{ss}=-\frac12 H_0^{-2}(\wt\Delta P)^2+2H_0^{-2}{\wt\Delta}  |\wn P|^2-2H_0^{-2} \la \wn \wt \Delta P, \wn P\ra-2H^{-2}_0(H_0^{-2}+|\wn P|^2)\\
P_{sss}= \frac14 H_0^{-2}(\wt\Delta P)^2- H_0^{-2} \wt\Delta  |\wn P|^2+ H_0^{-2} \la \wn \wt \Delta P, \wn P\ra+ H^{-2}_0(H_0^{-2}+|\wn P|^2)\\
\ \ \  -\la \wn P_{ss}, \wn P\ra- \la \wn P_s,\wn P_s\ra -  P_s^2.
  \end{array}
\right.
\ee
Moreover, if $H=H_0+ \mathcal{O}(s^4)$, then $P$ satisfies the compatibility condition at $s=0$,
\bee
\begin{split}
0=&    \frac{5}2  L_{ss}\wt\Delta P+ \frac34  H_0^{4} L_s^3-4 L_s\wt\Delta P_s-2H_{0}^{-2}\wt\Delta P_{ss}+6L_sP_s
 +  \la\wn L_{ss},\wn P\ra+2\la\wn L_s,\wn P_s\ra
\end{split}
\eee
where $L_s, P_s, L_{ss}$ and $P_{ss}$ are given by \eqref{e-smooth-2}.

Conversely, if $P$ satisfies \eqref{e-smooth-2} and $H=H_0$ at $s=0$, then $H=H_0+\mathcal{O}(s^3)$ near $s=0$.

\end{cor}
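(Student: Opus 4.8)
The plan is to read the corollary as the specialization of Theorem \ref{t-smooth-future} to the case where the limiting mean curvature $H_0$ is a \emph{constant}, so that $\wn H_0=0$ (hence $\wn(H_0^{-k})=0$ for every $k$), and then to make the implicitly defined quantities $L_s,P_{ss},L_{ss},P_{sss}$ explicit by repeated substitution. First I would settle the opening assertion $L=H_0^{-2}$ at $s=0$. By Theorem \ref{t-smooth-future}(a) we have $L\in\{0,H_0^{-2}\}$ pointwise at $s=0$; since $\Sigma$ is a graph over the whole $\mS^2\times\{0\}$ with $H_0>0$ constant, Proposition \ref{p-L=1} excludes $L=0$ and forces $L\equiv H_0^{-2}$ on all of $\mS^2\times\{0\}$. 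This \emph{global} conclusion is what I really need: it says that $L|_{s=0}$ is a constant function on $\mS^2$, so its tangential gradient $\wn L$ vanishes at $s=0$, a fact I will use repeatedly to annihilate terms of the form $\la\wn L,\wn(\cdot)\ra$.

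Next I would assemble the system \eqref{e-smooth-2}. The first line $P_s=-\frac12(H_0^{-2}+|\wn P|^2)$ is exactly Theorem \ref{t-smooth-future}(b)(i). Since $H=H_0+\mathcal{O}(s^3)$ in particular gives $H=H_0+\mathcal{O}(s^2)$, part (b)(ii) applies and, after imposing $\wn H_0=0$, yields the second and third lines $L_s=-H_0^{-2}\wt\Delta P$ and $P_{ss}=\frac12 H_0^{-2}\wt\Delta P+\frac12\la\wn(|\wn P|^2),\wn P\ra$. Because $H=H_0+\mathcal{O}(s^3)$, part (b)(iii) is then available; its abstract expression for $L_{ss}$ involves $L,L_s,P_s,\wt\Delta P_s$, and I would substitute $L=H_0^{-2}$, $\wn L=0$, $L_s=-H_0^{-2}\wt\Delta P$, $P_s=-\frac12(H_0^{-2}+|\wn P|^2)$, together with the consequence $\wt\Delta P_s=-\frac12\wt\Delta|\wn P|^2$ obtained by applying $\wt\Delta$ to the first line. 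Collecting the $(\wt\Delta P)^2$, $\wt\Delta|\wn P|^2$, $\la\wn\wt\Delta P,\wn P\ra$ and $(H_0^{-2}+|\wn P|^2)$ contributions produces the fourth line for $L_{ss}$; the last line for $P_{sss}$ then follows from the differentiated identity in \eqref{e-Ls}, namely $P_{sss}=-\frac12 L_{ss}-\la\wn P_{ss},\wn P\ra-\la\wn P_s,\wn P_s\ra-P_s^2$.

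For the compatibility condition I would invoke Theorem \ref{t-smooth-future}(c): the hypothesis $H=H_0+\mathcal{O}(s^4)$ implies $H=H_0+\mathcal{O}(s^3)$ (so (c) applies) and moreover $H_{sss}=0$ at $s=0$, which deletes the $-2H_0^{-3}H_{sss}$ term; simultaneously $\wn H_0=0$ deletes both the $\frac92 H_0^{-1}\la\wn H_0,\wn P\ra L_{ss}$ term and the $\la\wn(H_0^{-2}),\wn P_{ss}\ra$ term. What survives is precisely the displayed condition, with $L_s,P_s,L_{ss},P_{ss}$ as in \eqref{e-smooth-2}. For the converse I would argue in the reverse direction along the same equivalences: the first line of \eqref{e-smooth-2} already forces $L=-(2P_s+|\wn P|^2)=H_0^{-2}$ at $s=0$, and together with $H|_{s=0}=H_0$ this places us in case (b); the second line $L_s=-H_0^{-2}\wt\Delta P$ is, by the "only if$\,\Leftrightarrow\,$if" equivalence in (b)(ii) (with $\wn H_0=0$), equivalent to $H=H_0+\mathcal{O}(s^2)$; granting this, the fourth line for $L_{ss}$ is, by the equivalence in (b)(iii), equivalent to $H=H_0+\mathcal{O}(s^3)$, which is the claimed conclusion.

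I expect the only genuine work to lie in the second paragraph: verifying that the explicit $L_{ss}$ (and hence $P_{sss}$) in \eqref{e-smooth-2} really coincides with the substituted form coming from (b)(iii). The delicate point is bookkeeping the powers of $H_0$, since the final $(\wt\Delta P)^2$ coefficient of $L_{ss}$ emerges as the sum of a term quadratic in $L_s$ and a term of the shape $L_s\wt\Delta P$, and collapses to $-\frac12 H_0^{-2}(\wt\Delta P)^2$ only after $L_s=-H_0^{-2}\wt\Delta P$ is inserted and the $H_0$-exponents are matched exactly; the remaining contributions unwind through $P_s=-\frac12(H_0^{-2}+|\wn P|^2)$ and $\wt\Delta P_s=-\frac12\wt\Delta|\wn P|^2$. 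Everything else is routine substitution into the already-proven Theorem \ref{t-smooth-future}.
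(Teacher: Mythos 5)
Your route is exactly the paper's own (the corollary is stated as an immediate specialization of Theorem \ref{t-smooth-future}, with Proposition \ref{p-L=1} ruling out the alternative $L\equiv 0$ from part (a), and $\wn H_0=0$ deleting all gradient-of-$H_0$ terms): (b)(i) gives the first line, (b)(ii) the second and third, the identities \eqref{e-Ls} convert between $L_{ss}$ and $P_{sss}$, part (c) yields the compatibility condition by pure deletion of the $H_{sss}$, $\la\wn H_0,\wn P\ra L_{ss}$ and $\la\wn(H_0^{-2}),\wn P_{ss}\ra$ terms, and reading the equivalences in (b)(ii)--(iii) backwards gives the converse. Your observation that $L|_{s=0}\equiv H_0^{-2}$ is a \emph{constant} on $\mS^2$, so that $\wn L=0$ at $s=0$, is also the right mechanism, and invoking Proposition \ref{p-L=1} under the $C^4$ (rather than smooth) hypothesis is harmless since its barrier argument only needs $P$ differentiable up to $s=0$.

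However, the one step you yourself flag as delicate is precisely where your plan, as written, fails: substituting the \emph{printed} formula of Theorem \ref{t-smooth-future}(b)(iii) gives for the $(\wt\Delta P)^2$-part
\bee
-\frac92 H_0^{-2}L_s^2-4L_s\wt\Delta P=-\frac92 H_0^{-6}(\wt\Delta P)^2+4H_0^{-2}(\wt\Delta P)^2,
\eee
which equals the corollary's $-\frac12 H_0^{-2}(\wt\Delta P)^2$ only when $H_0=1$; your assertion that the $H_0$-exponents "match exactly" is false for general constant $H_0$. The resolution is a typo in the theorem (and in \eqref{e-2nd-order}): differentiating \eqref{e-CMC-null-1} twice, the $L_s^2$-term on the left-hand side is $-3H_0\cdot\frac34 L^{-\frac12}L_s^2=-\frac94 H_0^{2}L_s^2$, since $L^{-\frac12}=H_0$ at $s=0$, so the coefficient in (b)(iii) should read $-\frac92 H_0^{2}L_s^2$, not $-\frac92 H_0^{-2}L_s^2$. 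With the corrected coefficient one gets $-\frac92 H_0^{-2}(\wt\Delta P)^2+4H_0^{-2}(\wt\Delta P)^2=-\frac12 H_0^{-2}(\wt\Delta P)^2$, and the remaining terms unwind exactly as you describe ($-4L\wt\Delta P_s=2H_0^{-2}\wt\Delta|\wn P|^2$ via $\wt\Delta P_s=-\frac12\wt\Delta|\wn P|^2$, $2\la\wn L_s,\wn P\ra=-2H_0^{-2}\la\wn\wt\Delta P,\wn P\ra$, $4H_0^{-2}P_s=-2H_0^{-2}(H_0^{-2}+|\wn P|^2)$); a check against the SSCMC values $P_s=-\frac12 H^{-2}$, $P_{sss}=\frac34 H^{-4}$ of Proposition \ref{p-SS-P} confirms the corollary's normalization for general $H_0$. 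So your substitution strategy is the correct one and yields the stated formulas, but at this single point it must rederive the second-differentiated identity from \eqref{e-CMC-null-1} rather than take (b)(iii) at face value; the compatibility condition and the converse are unaffected, since (c) specializes verbatim (including the $\frac34 H_0^4L_s^3$ term) under $\wn H_0=0$ and $H_{sss}=0$.
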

\begin{rem} \begin{enumerate}
              \item [(i)] By the corollary, if $H$ is constant for example, then $P_s$, $P_{ss}$, $P_{sss}$ at $s=0$ can be determined by the boundary data of $P$ at $s=0$. On the other hand,  \S\ref{ss-SSCMC} shows that   $P_{ssss}$ cannot be expressed in terms of the boundary data in general.

              \item [(ii)] By part (d) of  Theorem \ref{t-smooth-future}(d), if two spacelike CMC surfaces with the same mean curvature so that they have the same boundary data at $s=0$, then they will be the same up to infinity order at $s=0$ provided the two corresponding $P_{ssss}$ are the same at $s=0$.
            \end{enumerate}

\end{rem}

\begin{rem}\label{r-compatibility} The compatibility condition is rather complicated.  If $P$ is constant,  it is easy to see that it satisfies the compatibility condition in   the above corollary. If $P=\mathbf{a}\cdot \mathbf{x}$ for some constant vector $\mathbf{a}\neq 0$ and $\mathbf{x}\in \mathbb{S}^{ 2}$, then it also satisfies the compatibility condition with $H_0=1$ either by direct computations or by the fact that it is the boundary data of ${ r-u}$  in the Minkowski spacetime where
$$
u(x)=\lf(1+|\mathbf{x}+\mathbf{a}|^2\ri)^\frac12.
$$
whose graph has constant mean curvature 1. It is unclear if there are other functions which satisfy the compatibility condition.
\end{rem}

\subsection{General spacelike CMC surfaces (II)}\label{ss-CMC-future-2}
We continue to use the setting in the previous section to study further behavior of general spacelike CMC surfaces near the future null-infinity.
In Proposition \ref{p-AH-SSCMC} in \S\ref{ss-SSCMC}, we show that a spacelike SSCMC surface is strongly asymptotically hyperbolic and the scalar curvature is asymptotically -6 up to order $s^6$. We want to understand the boundary behavior under similar additional condition of the scalar curvature. It turns out that this will put more restrictions on $P$.  We only consider the case that the mean curvature is asymptotically 1.

Let $\Sigma$ be a spacelike CMC surface near the future null-infinity so that it is the graph of $P=P(\mathbf{y},s)$  as before. In this subsection, we make the following assumptions. Let $\cS$ be the scalar curvature of $\Sigma$.

\begin{ass}\label{a1} $u$ is defined near $s=0$ such that $P$ is smooth up to $s=0$. Moreover, the mean curvature of the graph of $u$ is $H=1+\mathcal{O}(s^3).$

\end{ass}

\begin{thm}\label{t-mathringA} Under Assumption \ref{a1},  $|\mathring A|_G=O(s)$.

\begin{enumerate}
  \item[(a)] Moreover, the following statements are equivalent:
  \begin{enumerate}
    \item[(i)] $|\mathring A|_G =O(s^{2})$.
    \item[(ii)] $P$ is the linear combination of the first four eigenfunctions of the Laplacian of $\mS^2$ with respect to the standard metric.
    \item [(iii)] $|\mathring A|_G =O(s^{3})$.
  \end{enumerate}
    In case $H=1$, then they are equivalent to the fact that $|\cS+6|=O(s^6)$.
  \item [(b)]
    Furthermore, if $m>0$ then the following two statements are equivalent:
  \begin{enumerate}
    \item[(i)] $|\mathring A|_G =O(s^{4})$.
    \item[(ii)] $P$ is constant and $P_{ssss}=-\frac{9m}2$ at $s=0$.
  \end{enumerate}
\end{enumerate}
\end{thm}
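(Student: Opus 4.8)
The plan is to reduce everything to an expansion of $|\mathring A|_G^2$ in powers of $s$ near $s=0$, exploiting that the lower-order $s$-derivatives of $P$ are already pinned down by the boundary value $f=P(\cdot,0)$ through Corollary \ref{c-smooth-null} (applicable since Assumption \ref{a1} together with Proposition \ref{p-L=1} gives $L\equiv H_0^{-2}=1$ at $s=0$). The starting point is an explicit formula for the second fundamental form. Writing $\Sigma=\{F=0\}$ with $F=\bar v+P$ and unit normal $\mathbf{n}$ proportional to $\nabla F$ as in Lemma \ref{l-equation}, the shape operator is $\mp L^{-\frac12}$ times the tangential Hessian of $F$, so $A_{ij}=\mp L^{-\frac12}(\mathrm{Hess}_{g_{\mathrm{Sch}}}F)(e_i,e_j)$ on the coordinate frame $\{e_i\}$ of \eqref{e-basis-Sigma}, with $|\tr_G A|=3H$; the overall sign is irrelevant for $|\mathring A|_G$. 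Then $\mathring A=A-HG$ and $|\mathring A|_G^2=L^{-1}G^{ik}G^{jl}(\mathrm{Hess}\,F)_{ij}(\mathrm{Hess}\,F)_{kl}-3H^2$, where $G^{ij}$ is the inverse of the induced metric of Lemma \ref{l-induced-metric}. I would first record $G^{ij}$ and the relevant Christoffel symbols to leading orders in $s$, then substitute $P_s,P_{ss},P_{sss}$ from \eqref{e-smooth-2}. Because $\Sigma$ is asymptotically hyperbolic (the analogue of Proposition \ref{p-AH-SSCMC}) it is umbilic at $s=0$, which gives at once the first assertion $|\mathring A|_G=O(s)$.

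The heart of part (a) is the identification of the leading coefficient. Expanding $|\mathring A|_G^2=\Phi_2(\mathbf{y})\,s^2+O(s^3)$, I expect $\Phi_2$ to be a positive multiple of $|\mathring{\wn}^2 f|_\sigma^2$, where $\mathring{\wn}^2 f=\wn^2 f-\tfrac12(\wt\Delta f)\sigma$ is the trace-free Hessian of $f$ on $\mS^2$; the quadratic-in-$\wn f$ contributions coming from the cross terms of $G$ must cancel at this order, as forced by the Minkowski boosted hyperboloid $u=(1+|\mathbf{x}+\mathbf{a}|^2)^{\frac12}$, for which $\mathring A\equiv0$ while $\wn f\neq0$. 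Since the kernel of $f\mapsto\mathring{\wn}^2 f$ on $(\mS^2,\sigma)$ is exactly the span of the first four eigenfunctions of $\wt\Delta$ (the equation $\wn_A\wn_B f=\tfrac12(\wt\Delta f)\sigma_{AB}$ admits only constants and restrictions of linear functions), the vanishing $\Phi_2\equiv0$ is equivalent to statement (ii); this yields (i)$\Leftrightarrow$(ii). To close (ii)$\Rightarrow$(iii) I would substitute the special form of $f$ back into the expansion and verify that the $s^3$-coefficient of $|\mathring A|_G^2$ also vanishes, using that once $f$ lies in this four-dimensional space the whole $s$-jet of $P$ becomes rigid, its higher derivatives being determined by $f$ and $P_{ssss}(0)$ through Theorem \ref{t-smooth-future}(d); (iii)$\Rightarrow$(i) is trivial. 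The scalar-curvature reformulation is then immediate from the Gauss equation: since Schwarzschild is Ricci-flat, one has $\cS=|\mathring A|_G^2-6H^2$, so when $H\equiv1$, $\cS+6=|\mathring A|_G^2$ and $|\cS+6|=O(s^6)\Leftrightarrow|\mathring A|_G=O(s^3)$.

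For part (b) I would take $m>0$ and, by part (a), assume $f$ is a combination of the first four eigenfunctions, so $|\mathring A|_G=O(s^3)$; write $|\mathring A|_G^2=\Phi_6(\mathbf{y})\,s^6+O(s^7)$. The task is to compute $\Phi_6$ as a function of $m$, the data $f$, and $P_{ssss}(0)$, which now genuinely enters (recall from \S\ref{ss-SSCMC} that $P_{ssss}$ is not determined by $f$), so that $|\mathring A|_G=O(s^4)\Leftrightarrow\Phi_6\equiv0$. I expect $\Phi_6$ to split into a term coupling $m$ to the first-eigenfunction part of $f$ and a term involving the constant mode together with $P_{ssss}(0)$; because $m>0$, the coupling term forces the non-constant modes of $f$ to vanish, i.e.\ $P$ constant, after which the residual equation reduces to $P_{ssss}(0)=-\tfrac{9m}2$. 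This matches Proposition \ref{p-SS-P}, where $P_{ssss}=6c-\tfrac{9m}2$ at $H=1$ and $|\mathring A|_G^2=6c^2s^6$, so that $O(s^4)$ corresponds to $c=0$; the disappearance of the coupling term at $m=0$ is precisely the stated counterexample.

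The step I expect to be the main obstacle is the order-$s^6$ computation of $\Phi_6$ in part (b): it requires the fourth-order jet of $P$, the compatibility condition of Theorem \ref{t-smooth-future}(c), and the first-eigenfunction identities on $\mS^2$, and one must isolate the $m$-linear coupling cleanly enough to conclude that $\Phi_6\equiv0$ forces the non-constant modes of $f$ to vanish. The rigidity step (ii)$\Rightarrow$(iii) of part (a) is a smaller instance of the same difficulty.
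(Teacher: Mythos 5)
Your route is essentially the paper's: reduce everything to the $s$-jet of $P$ pinned down by Corollary \ref{c-smooth-null} (via Proposition \ref{p-L=1}), identify the leading obstruction with the trace-free Hessian of $f$ on $\mS^2$, whose kernel is exactly the span of the first four eigenfunctions (the paper proves this via Bochner and spectral decomposition; you cite it as known), obtain the scalar-curvature equivalence from the Gauss equation with Ricci-flatness ($\cS=|\mathring A|_G^2-6H^2$, Lemma \ref{l-scalar}), and in (b) extract from the order-$s^3$ coefficients the $m$-coupled terms that force $\wn f\equiv 0$, leaving the diagonal equation $P_{ssss}(0)=-\frac{9m}2$. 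The two technical differences: the paper works componentwise rather than with the scalar $|\mathring A|_G^2$ --- Lemma \ref{l-decay} converts $|\mathring A|_G=O(s^n)$ into $s(\ol A_{ij}+B\ol g_{ij})-\ol g_{ij}=O(s^n)$ --- and, for both (ii)$\Rightarrow$(iii) and all of part (b), it subtracts the exact boosted-hyperboloid solution $P^0$ with the same boundary data, so that $P-P^0=O(s^4)$ and at order $s^3$ only the differences $\frac16(b-c)$ and the explicit $m$-terms ($3mP_\a P_\b$, $-6mP_sP_\a$, $-9mP_s^2$) survive; this comparison device is precisely what tames the order-$s^6$ computation you correctly flag as the main obstacle, and it yields $\mathbf{a}=\mathbf{0}$ from the off-diagonal $3mP_\a P_\b$ alone. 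Incidentally, your guessed form $\Phi_2=c\,|\wn^2 f-\frac12(\wt\Delta f)\sigma|_\sigma^2$ is exactly right: the order-$s$ entry matrix is $M_{\a\b}=-(\wn^2P-\frac12(\wt\Delta P)\sigma)_{\a\b}$, $M_{3\a}=(\wn^2P-\frac12(\wt\Delta P)\sigma)(\wn P)_\a$, $M_{33}=-(\wn^2P-\frac12(\wt\Delta P)\sigma)(\wn P,\wn P)$, and contraction against the inverse of \eqref{e-g-matrix} collapses all $\wn P$-terms. But your proposed certification of that cancellation by the boosted hyperboloid proves nothing, since the hyperboloid has vanishing trace-free Hessian too and so cannot distinguish your formula from one with extra terms in $\mathring\wn^2 f(\wn f,\cdot)$; for (i)$\Rightarrow$(ii) only the $\a\b$-block is needed anyway, as in the paper.

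Two concrete repairs. First, your justification of the opening assertion $|\mathring A|_G=O(s)$ is circular: asymptotic hyperbolicity of a general CMC graph is Theorem \ref{t-AH-CMC}, proved \emph{after} and \emph{from} Theorem \ref{t-mathringA}, and only under hypothesis (a)(ii); Proposition \ref{p-AH-SSCMC} covers SSCMC surfaces only. The correct argument is the leading-order computation your own framework supplies (and the paper carries out): $\ol A_{ij}$ is bounded by Lemma \ref{l-2nd} and $sB-1=O(s)$ because $L=1$ at $s=0$, so drop the appeal to umbilicity at infinity. (When you do this with your Hessian formula, note also that your $L$ must be taken with respect to $g_{\mathrm{Sch}}$, i.e.\ $s^2$ times the $L$ of \eqref{Ldef}.) Second, your bookkeeping for (ii)$\Rightarrow$(iii) is off: $|\mathring A|_G=O(s^3)$ means $|\mathring A|_G^2=O(s^6)$, so you must kill the coefficients of $|\mathring A|_G^2$ through order $s^5$, not just $s^3$; since $|\mathring A|_G^2$ is a positive-definite quadratic form in the entry expansions, the odd-order coefficients vanish automatically once the even ones do, and the genuine content is that the entries vanish to second order in $s$ --- the three identities \eqref{e-os3}. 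Their verification uses only $P_s,P_{ss},P_{sss}$ from \eqref{e-smooth-2}, so it needs neither $P_{ssss}$ nor the smoothness-based rigidity of Theorem \ref{t-smooth-future}(d) that your sketch invokes; that rigidity requires $P$ smooth up to $s=0$ and determines the jet only once $P_{ssss}(0)$ is prescribed, so it cannot substitute for the direct second-order check.
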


The last statement of part (a) of  the theorem  follows immediately from the following observation:

\begin{lma}\label{l-scalar} Suppose $\Sigma$ is a spacelike surface    defined near the null-infinity of the Schwarzchild spacetime with constant mean curvature $H=1$.  Let $\cS$ be the scalar curvature of $\Sigma$ and let $\mathring A$ be the traceless part of the second fundamental form $A$ of $\Sigma$. Then
 $\cS+6=\mathcal{O}(s^k)$ if and only if $|\mathring A|^2=\mathcal{O}(s^k)$ for $k>0$.
Here the norm of $\mathring A$ is computed with respect to the induced metric on $\Sigma$.
\end{lma}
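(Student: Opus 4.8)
The plan is to reduce the equivalence to a single pointwise identity, namely that under the hypotheses of the lemma one has the exact relation
\[
\cS+6=|\mathring A|^2
\]
on $\Sigma$, where both sides are computed with respect to the induced metric $G=g_{\mathrm{Sch}}|_\Sigma$. Once this is established the conclusion is immediate: since the $\mathcal{O}(s^k)$ notation is defined pointwise as a decay condition on functions on $\Sigma$, and the two functions $\cS+6$ and $|\mathring A|^2$ are literally equal, one has $\cS+6=\mathcal{O}(s^k)$ if and only if $|\mathring A|^2=\mathcal{O}(s^k)$ for every $k>0$. In particular none of the detailed boundary expansions of $P$ from Theorem \ref{t-smooth-future} are needed; the lemma is a pure curvature identity.

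The two ingredients I would use are: (1) the Schwarzschild spacetime is a vacuum solution of the Einstein equations, hence Ricci-flat, so its Ricci tensor $\Ric$ and its scalar curvature both vanish identically; and (2) the twice-contracted Gauss equation (the Hamiltonian constraint) for the spacelike hypersurface $\Sigma$ with future-directed timelike unit normal $\mathbf{n}$. First I would write down the Gauss equation for $\Sigma$ and contract it twice. The resulting expression for $\cS$ contains the ambient scalar curvature together with the normal--normal Ricci term $\Ric(\mathbf{n},\mathbf{n})$, both of which vanish by Ricci-flatness, leaving
\[
\cS=|A|^2-(\tr A)^2,
\]
where $A$ is the second fundamental form of $\Sigma$ and the norm and trace are taken with respect to $G$. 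Next I would decompose $A=\mathring A+HG$; since $\dim\Sigma=3$ and $H$ is one third of the trace, $\tr A=3H$, so $|A|^2=|\mathring A|^2+3H^2$ and $(\tr A)^2=9H^2$. Substituting these and using $H=1$ gives $\cS=|\mathring A|^2-6$, which is exactly the desired identity.

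The one delicate point — the hard part, though it is short — will be fixing the signs correctly in the Lorentzian Gauss equation, because the timelike unit normal ($\la\mathbf{n},\mathbf{n}\ra=-1$) introduces a sign change relative to the familiar Riemannian case; an error here would flip $\cS=|A|^2-(\tr A)^2$ into $\cS=(\tr A)^2-|A|^2$ and spoil the sign of the limiting value. I would pin the sign down by testing against the unit hyperboloid in Minkowski spacetime: it is totally umbilic with $\mathring A=0$, $\tr A=3$, and intrinsic scalar curvature $-6$, which forces the constraint to read $\cS=|A|^2-(\tr A)^2$ and yields the correct limit $\cS\to-6$. As an internal consistency check I would compare with the explicit SSCMC computation in Proposition \ref{p-AH-SSCMC}, where $\cS+6=6br^{-6}=6c^2s^6\ge 0$ while $P$ is constant; this simultaneously confirms the nonnegativity of $\cS+6=|\mathring A|^2$ and the agreement of the decay rates predicted by the identity.
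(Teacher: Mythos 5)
Your proposal is correct and takes essentially the same route as the paper: the paper's proof is exactly the twice-contracted Gauss equation combined with Ricci-flatness of Schwarzschild, giving $\cS=-9+|A|^2=-6+|\mathring A|^2$ when $H=1$ (since $H$ is one third of $\tr A$), from which the equivalence is immediate. Your hyperboloid test to pin down the Lorentzian sign is a sensible safeguard but adds nothing beyond the paper's one-line identity.
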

\begin{proof} By the Gauss equation, and the fact that the Ricci tensor of Schwarzschild spacetime is zero, we have

\bee
\cR=-9 +|  A|^2=-6+|\mathring A|^2
\eee
because the mean curvature is 1, which is one-third of the trace of $A$. From this the result follows.
\end{proof}
\begin{rem}\label{r-scalar} In general, if the spacetime has a compactification of the future null-infinity similar to Schwarzschild spacetime and if $s^{-k+2}\Ric$ can be extended to be a smooth tensor  up to $s=0$, then the result of the lemma is still true.
\end{rem}
To prove the theorem, let us first fix some notations. Let $y^1, y^2$ be coordinates of $\mS^2$ and let  $s=y^3, \bar v=y^4$. In the following, $\a, \b,\dots$ range from 1 to 2; $i, j,\dots $ range from 1 to 3; and  $a, b,\dots$ range from 1 to 4. Suppose $\phi$ is a function on the surface, $\phi_a$ will  denote  the partial derivative  with respect to $y^a$.
We will denote the tangent vector  $\p_{y^a}$ of the ambient space  by $\p_a$.  Let $\ol g$ be the unphysical metric as before. Direct computations give:

\begin{lma}\label{l-connection} Choose a normal coordinates $y^1, y^2$ at a point $\mathbf{y}_0 \in \mathbb{S}^2$ and let $\ol \nabla$ be the covariant derivative with respect to $\ol g$. Then
\be\label{e-connection}
\left\{
  \begin{array}{ll}
    \ol\nabla_{\p_\b}\p_a=  \ol \nabla _{\p_a}\p_\b=0,\ 1\le a\le 4, 1\le \b\le 2;\\
\ol\nabla_{\p_3}\p_3=0; \\
\ol\nabla_{\p_4}\p_4
=s^3(1-5ms+6m^2s^2)\p_3+s(1-3ms)\p_4\\
\ol\nabla_{\p_3}\p_4=\ol\nabla_{\p_4}\p_3=-s(1-3ms)\p_3.
  \end{array}
\right.
\ee
\end{lma}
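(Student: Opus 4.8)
The final statement to prove is Lemma \ref{l-connection}, computing the Christoffel symbols of the unphysical product metric $\ol g = \sigma \oplus g_1$ where $g_1 = -s^2(1-2ms)d\bar v^2 + 2\,d\bar v\,ds$.

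\medskip

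The plan is to exploit the product structure of $\ol g$ recorded in \S\ref{ss-compactification}. Since $\ol g$ splits orthogonally as the $\mS^2$-factor $(\sigma_{AB})$ in the $y^1,y^2$ directions and the two-dimensional Lorentzian factor $g_1$ in the $s,\bar v$ directions, with no cross terms, the Levi-Civita connection also splits: any Christoffel symbol mixing a sphere index with an $(s,\bar v)$ index vanishes, and the remaining symbols are computed separately on each factor. This immediately accounts for the first line of \eqref{e-connection}: working in normal coordinates $y^1,y^2$ centered at $\mathbf{y}_0$, the sphere Christoffel symbols $\wt\Gamma^B_{CD}$ all vanish at $\mathbf{y}_0$, and because the metric coefficients $\sigma_{AB}$ do not depend on $s$ or $\bar v$ and $g_1$ does not depend on $y^1,y^2$, there are no mixed connection coefficients anywhere. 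Thus $\ol\nabla_{\p_\b}\p_a = \ol\nabla_{\p_a}\p_\b = 0$ for $1\le a\le 4$ and $1\le\b\le 2$ at the chosen point.

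\medskip

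First I would therefore reduce the problem to computing the connection of the rank-two Lorentzian metric $g_1$ on the $(s,\bar v)$-plane, i.e. finding $\ol\nabla_{\p_i}\p_j$ for $i,j\in\{3,4\}$ where $\p_3=\p_s$, $\p_4=\p_{\bar v}$. Writing $g_1$ in the frame $(\p_s,\p_{\bar v})$, its matrix is $\begin{pmatrix} 0 & 1 \\ 1 & -s^2(1-2ms)\end{pmatrix}$ with inverse $\begin{pmatrix} s^2(1-2ms) & 1 \\ 1 & 0\end{pmatrix}$, both already displayed in the excerpt. The only metric coefficient that depends on a coordinate is $(g_1)_{\bar v\bar v} = -s^2(1-2ms) = -s^2 + 2ms^3$, whose $s$-derivative is $-2s + 6ms^2 = -2s(1-3ms)$; all other coefficients are constant. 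I would then apply the standard formula $\Gamma^k_{ij} = \tfrac12 g^{kl}(\p_i g_{jl} + \p_j g_{il} - \p_l g_{ij})$ on this two-dimensional factor, using that only $\p_s (g_1)_{\bar v\bar v}$ is nonzero. This produces $\ol\nabla_{\p_3}\p_3 = 0$ directly, and for $\ol\nabla_{\p_4}\p_4$ and $\ol\nabla_{\p_3}\p_4$ one contracts the single nonvanishing derivative with the two rows of the inverse matrix; substituting $s^2(1-2ms)\cdot s(1-3ms)$ and simplifying gives the coefficient $s^3(1-5ms+6m^2s^2)$ in front of $\p_3$ in the $\ol\nabla_{\p_4}\p_4$ line, while the $\p_4$-coefficient $s(1-3ms)$ comes from the other row, and similarly $\ol\nabla_{\p_3}\p_4 = \ol\nabla_{\p_4}\p_3 = -s(1-3ms)\p_3$ by the symmetry of the connection.

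\medskip

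The computation is entirely routine once the product splitting is invoked, so there is no genuine obstacle; the only point requiring care is the bookkeeping in expanding $s^2(1-2ms)\cdot s(1-3ms) = s^3(1 - 5ms + 6m^2s^2)$, where one must correctly collect the cross terms $-2m-3m = -5m$ at order $s$ and $6m^2$ at order $s^2$. I would verify this expansion explicitly and confirm consistency by checking the symmetry $\ol\nabla_{\p_3}\p_4 = \ol\nabla_{\p_4}\p_3$, which the Levi-Civita connection must satisfy since $\ol g$ is torsion-free. A minor conceptual point worth stating is that the vanishing in the first line of \eqref{e-connection} involving $\p_3,\p_4$ holds globally and does not rely on the normal-coordinate choice, whereas the vanishing of the purely spherical symbols $\ol\nabla_{\p_C}\p_D$ is the reason we pass to normal coordinates at $\mathbf{y}_0$; this is precisely the setup under which the subsequent second-fundamental-form and scalar-curvature computations in the proof of Theorem \ref{t-mathringA} are carried out.
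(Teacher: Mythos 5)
Your proof is correct and is exactly the ``direct computation'' the paper invokes without writing out: the paper states Lemma \ref{l-connection} with no proof beyond that phrase, and your use of the product splitting $\ol g=\sigma\oplus g_1$, the inverse block $\left(\begin{smallmatrix} s^2(1-2ms) & 1\\ 1 & 0\end{smallmatrix}\right)$, and the single nonvanishing derivative $\p_s (g_1)_{\bar v\bar v}=-2s(1-3ms)$ reproduces all four lines of \eqref{e-connection}, including the expansion $s^2(1-2ms)\cdot s(1-3ms)=s^3(1-5ms+6m^2s^2)$. No gaps; nothing further is needed.
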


\vskip.1cm

Let $\Sigma$ be parametrized by
$$
\Phi:(y^1,y^2,s)\to (y^1,y^2,s,-P(y^1,y^2,s)).
$$
Choose the following basis in $T(\Sigma)$:
\be\label{e-basis}
\left\{
  \begin{array}{ll}
   e_1=    \p_1-P_1\p_4;  \\
    e_2= \p_2-P_2\p_4; \\
   e_3= \p_3-P_3\p_4.
  \end{array}
\right.
\ee
Note that they are just coordinate frames with respect to the coordinates $y^1, y^2, s$.
\begin{lma}\label{l-induced-connection} At a point on $\Sigma$, choose a normal coordinates $y^1, y^2$ at a point  $\mathbf{y}_0\in\mS^2$.
\begin{enumerate}
  \item [(i)] Let $\ol g_{ij}:=\ol g(e_i,e_j)$, then
\bee
\left\{
  \begin{array}{ll}
    \ol g_{\a\b}=\delta_{\a\b}-s^2(1-2ms)P_\a P_\b,\ \  1\le \a,\b\le 2;\\
\ol g_{33}=-s^2(1-2ms)P_s^2-2P_s;\\
\ol g_{3\a}=-P_\a-s^2(1-2ms)P_\a P_s,\ \  1\le \a\le2.\\
  \end{array}
\right.
\eee
The metric on $\Sigma$ induced by $g_{\mathrm{Sch}}$ is given by
\bee
G_{ij}:=g_{\mathrm{Sch}}(e_i,e_j)=s^{-2}\ol g_{ij}.
\eee
  \item [(ii)] \bee
\left\{
  \begin{array}{ll}
  \ol\nabla_{e_\a}e_\b=-P_{\a\b}\p_4+P_\a P_\b \ol\nabla_{\p_4}\p_4, 1\le \a,\b\le 2; \\
  \ol\nabla_{e_3}e_3= 2s(1-3ms)P_{s}\p_3-P_{ss}\p_4+P_ s ^2\ol\nabla_{\p_4}\p_4;\\
  \ol\nabla_{e_3}e_\b=-P_{s\b}\p_4+ s(1-3ms)P_\b\p_3+P_s P_\b \ol\nabla_{\p_4}\p_4, 1\le \b\le2.
  \end{array}
\right.
\eee

\end{enumerate}

\end{lma}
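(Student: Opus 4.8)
The plan is to treat both parts as direct tensorial computations resting on two structural facts: the block-diagonal form of the unphysical metric $\ol g = \sigma\oplus g_1$ recorded in \S\ref{ss-equation-future-null}, and the fact that $P$ is a function of $(y^1,y^2,s)$ alone, so that $\p_4=\p_{\bar v}$ annihilates $P$ together with all of its derivatives. Everything else is linearity, the Leibniz rule, and substitution from Lemma \ref{l-connection}.

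For part (i), I would expand $\ol g_{ij}=\ol g(e_i,e_j)$ bilinearly using the basis \eqref{e-basis}. Because $\ol g$ splits between the $\mS^2$ factor and the $(s,\bar v)$ factor, every mixed pairing $\ol g(\p_\a,\p_3)$ and $\ol g(\p_\a,\p_4)$ vanishes, while within the second block one reads off $\ol g(\p_3,\p_3)=0$, $\ol g(\p_3,\p_4)=1$, and $\ol g(\p_4,\p_4)=-s^2(1-2ms)$ directly from the matrix in \S\ref{ss-equation-future-null}. Choosing normal coordinates at $\mathbf{y}_0$ so that $\sigma_{\a\b}=\delta_{\a\b}$ there, the three stated formulas for $\ol g_{\a\b}$, $\ol g_{33}$, $\ol g_{3\a}$ fall out after collecting the $P_\a P_\b$, $P_s^2$, and $P_\a P_s$ terms; the identity $G_{ij}=s^{-2}\ol g_{ij}$ is then immediate from $g_{\mathrm{Sch}}=s^{-2}\ol g$ in \eqref{e-metric-null}.

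For part (ii), I would compute each $\ol\nabla_{e_i}e_j$ by writing $e_i=\p_i-P_i\p_4$, expanding by linearity in the direction field and the Leibniz rule in the differentiated field. Two simplifications do all the work. First, whenever a scalar such as $P_\b$ or $P_s$ is differentiated along $e_i=\p_i-P_i\p_4$, the $\p_4$-part contributes nothing since $P$ is independent of $\bar v$, giving $e_i P_\b=\p_i P_\b=P_{i\b}$ and $e_3 P_s=P_{ss}$. Second, by Lemma \ref{l-connection} every Christoffel combination carrying an $\mS^2$ index vanishes at $\mathbf{y}_0$ (this is where the normal-coordinate choice enters), so $\ol\nabla_{e_i}\p_\b=0$ and the only surviving ambient covariant derivatives are $\ol\nabla_{\p_3}\p_4=\ol\nabla_{\p_4}\p_3=-s(1-3ms)\p_3$ and $\ol\nabla_{\p_4}\p_4$. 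Feeding these into $\ol\nabla_{e_i}\p_4=\ol\nabla_{\p_i}\p_4-P_i\,\ol\nabla_{\p_4}\p_4$ and collecting terms reproduces the three displayed identities.

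Since every step is forced by linearity, the Leibniz rule, and substitution from Lemma \ref{l-connection}, there is no genuine obstacle here. The only place demanding care is the bookkeeping of the two cross terms that both contribute to the $\p_3$-coefficient of $\ol\nabla_{e_3}e_3$: the term $-P_s\,\ol\nabla_{\p_4}\p_3$ coming from differentiating $\p_3$, and the matching term inside $-P_s\,\ol\nabla_{e_3}\p_4$, which combine to give the factor $2s(1-3ms)P_s\p_3$. I would also keep explicit that the normal-coordinate normalization is used only pointwise at $\mathbf{y}_0$, so the second derivatives $P_{\a\b}$, $P_{s\b}$ appearing later are understood with respect to that frame. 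For transparency I would present part (ii) as three short expansions rather than a single array, so that these cancellations are visible.
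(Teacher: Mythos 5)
Your proposal is correct and takes essentially the same route as the paper, whose own proof is just ``By Lemmas \ref{l-induced-metric}, \ref{l-connection} and direct computations'': you carry out exactly those computations, expanding in the frame \eqref{e-basis}, using the product structure of $\ol g$ for (i) and the connection values of Lemma \ref{l-connection} (valid at $\mathbf{y}_0$ in normal coordinates) for (ii). Your bookkeeping checks out, including the key doubled cross term yielding $2s(1-3ms)P_s\p_3$ in $\ol\nabla_{e_3}e_3$ and the use of $\p_4 P=0$ to reduce $e_iP_j$ to ordinary partials.
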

\begin{proof} By Lemmas \ref{l-induced-metric}, \ref{l-connection} and direct computations, the results follow.

\end{proof}

By Theorem \ref{t-smooth-future} and Corollary \ref{c-smooth-null}, $L=-\la \nabla F,\nabla F\ra$ is 1 at $s=0$. In particular, it is positive near $s=0$.
\begin{lma}\label{l-2nd} In the setting as in the previous lemma, at the point $(\mathbf{y}_0, y^3,y^4)$, let  $\ol A$ and $A$ be the second fundamental forms of $\Sigma$ with respect to the future directed unit normal and with respect to the metrics $\ol g$ and $g_{\mathrm{Sch}}$ respectively, and let $\ol A_{ij}=\ol A(e_i,e_j)$, $A_{ij}=A(e_i,e_j)$, where $e_i$ are as in \eqref{e-basis}. Then
\bee
\left\{
  \begin{array}{ll}
  \ol A_{\a\b} =L^{-\frac12}\lf(-3mP_\a P_\b s^2+P_\a P_\b s-P_{\a\b}\ri)+O(s^3),  1\le \a,\b\le 2;\\
\ol A_{33}=L^{-\frac12}\lf(  - 9m P_s^2s^2  +3P_s^2s-P_{ss}\ri)+O(s^3);\\
\ol A_{\b3}=L^{-\frac12}\lf( -6m P_s P_\b s^2+2 P_\b P_s s-P_{\b s}\ri)+O(s^3), 1\le \b\le 2. \\
 \end{array}
\right.
\eee

Moreover
$$
A_{ij}=s^{-1}\lf(\ol A_{ij}+B\ol g_{ij}\ri)
$$
where $L=-\lf(2P_s+s^2(1-2ms)P_s^{2}+|\wn P|^2\ri)$ and $B=s^{-1}L^{-\frac12}\lf(1+s^2(1-2ms)P_s\ri)$.
\end{lma}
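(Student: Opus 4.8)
The plan is to compute each entry of $\ol A$ directly from the definition of the second fundamental form as the $\ol g$-normal component of the ambient covariant derivative, using the covariant derivatives $\ol\nabla_{e_i}e_j$ already recorded in Lemma \ref{l-induced-connection}(ii) together with $\ol\nabla_{\p_4}\p_4$ from Lemma \ref{l-connection}. Writing $F=P+\bar v$ so that $F_1=P_1$, $F_2=P_2$, $F_3=P_s$, $F_4=1$, and taking the future directed unit normal $\bar{\mathbf{n}}=-L^{-\frac12}\nabla F$ (recall $-\nabla F$ is future directed), the crucial simplification is that for any coordinate vector $\p_a$ one has $\la\p_a,\nabla F\ra_{\ol g}=F_a$ by the very definition of the gradient. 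Hence, after fixing the sign convention consistent with the paper's mean curvature, $\ol A_{ij}=L^{-\frac12}\la\ol\nabla_{e_i}e_j,\nabla F\ra_{\ol g}$, and the pairings collapse to reading off the four numbers $(P_1,P_2,P_s,1)$ against the components of $\ol\nabla_{e_i}e_j$, with no inversion of $\ol g$ required.

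First I would record the single auxiliary quantity $\la\ol\nabla_{\p_4}\p_4,\nabla F\ra_{\ol g}=s^3(1-5ms+6m^2s^2)P_s+s(1-3ms)=s-3ms^2+\mathcal{O}(s^3)$, obtained by pairing the formula of Lemma \ref{l-connection} with $F_3=P_s$ and $F_4=1$. Substituting the three expressions of Lemma \ref{l-induced-connection}(ii) and expanding to second order in $s$ then produces every entry: for $\ol A_{\a\b}$ the $\p_4$-term yields $-P_{\a\b}$ while $P_\a P_\b\ol\nabla_{\p_4}\p_4$ yields $P_\a P_\b(s-3ms^2)$; for $\ol A_{33}$ the term $2s(1-3ms)P_s\p_3$ paired with $F_3=P_s$ gives $2sP_s^2(1-3ms)$, which combines with the $P_s^2\ol\nabla_{\p_4}\p_4$ contribution $sP_s^2(1-3ms)$ to give the coefficient $3P_s^2s-9mP_s^2s^2$, and $-P_{ss}\p_4$ gives $-P_{ss}$; and for $\ol A_{\b3}$ the terms $s(1-3ms)P_\b\p_3$ and $P_sP_\b\ol\nabla_{\p_4}\p_4$ each give $sP_\b P_s(1-3ms)$, summing to $2P_\b P_s s-6mP_\b P_s s^2$, with $-P_{s\b}\p_4$ giving $-P_{s\b}$. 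Multiplying by $L^{-\frac12}$ reproduces the stated formulas, with all genuinely higher pieces absorbed into $\mathcal{O}(s^3)$.

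For the relation between $A$ and $\ol A$, I would invoke the conformal transformation law of the second fundamental form under $g_{\mathrm{Sch}}=e^{2\lambda}\ol g$ with $\lambda=-\log s$, namely $A_{ij}=e^{\lambda}\lf(\ol A_{ij}+d\lambda(\bar{\mathbf{n}})\,\ol g_{ij}\ri)$; this is the same identity whose trace underlies \eqref{e-mean-curv}, so it is internally consistent with the paper's conventions. Then $e^{\lambda}=s^{-1}$ and $B=d\lambda(\bar{\mathbf{n}})$. Using $d\lambda=-s^{-1}ds$ together with $ds(\nabla F)=(\nabla F)^3=\ol g^{3b}F_b=1+s^2(1-2ms)P_s$, and the future orientation of $\bar{\mathbf{n}}$, one obtains $B=s^{-1}L^{-\frac12}\lf(1+s^2(1-2ms)P_s\ri)$, exactly as claimed; the expression for $L$ is just \eqref{Ldef}.

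I expect the main obstacle to be bookkeeping rather than conceptual. The delicate points are fixing the sign of the timelike future directed normal so that the $-P_{ij}$ terms and the positive sign of $B$ come out correctly, and expanding the three products $P_\a P_\b\ol\nabla_{\p_4}\p_4$, $P_s^2\ol\nabla_{\p_4}\p_4$, $P_sP_\b\ol\nabla_{\p_4}\p_4$ consistently through order $s^2$. Once the orientation is pinned down by matching a single coefficient, each entry then follows by the same mechanical expansion, and the conformal law delivers $A_{ij}$ at once.
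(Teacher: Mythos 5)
Your proposal is correct and follows essentially the same route as the paper: both compute $\ol A_{ij}=-\la \ol\nabla_{e_i}e_j,\nu\ra_{\ol g}$ with $\nu=-L^{-\frac12}\nabla F$ by substituting the covariant derivatives from Lemmas \ref{l-connection} and \ref{l-induced-connection}, expanding in $s$, and then deduce $A_{ij}=s^{-1}(\ol A_{ij}+B\ol g_{ij})$ from the conformal transformation law whose trace gives \eqref{e-mean-curv}. Your observation that the pairing collapses to $L^{-\frac12}\,dF(\ol\nabla_{e_i}e_j)$, so each entry is read off against $(P_1,P_2,P_s,1)$ with no contraction against $\ol g$, is a harmless streamlining of the paper's explicit computation (e.g.\ of $\ol A_{33}$), and your orientation conventions and the value $B=s^{-1}L^{-\frac12}\lf(1+s^2(1-2ms)P_s\ri)$ all check out.
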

\begin{proof} The unit future pointing normal of $\Sigma$ with respect to $\ol g$ is:

$$
\nu=-L^{-\frac12}\nabla F=-L^{-\frac12}\lf(\sum_\a P_\a\p_\a+(1+s^2(1-2ms)P_s)\p_3+P_s\p_4\ri)
$$
where $-L=2P_s+s^2(1-2ms)P_s^{2}+|\wn P|^2.$ By Lemma \ref{l-induced-connection},  \eqref{e-connection}   and
\bee
\ol A_{ij}=-\la \ol\nabla_{e_i}e_j,\nu\ra_{\ol g},
 \eee
 then $\ol A_{ij}$ are as in the lemma. The last result follows from the relation on the second fundamental forms between two conformal metrics. For example,
 \be
\begin{split}
 \ol A_{33}=
&-\la \nabla_{e_3}e_3,\nu\ra\\
=& L^{-\frac12} \bigg\{-s^2(1-2ms)P_s\lf(-P_{ss}+sP_s^2-3ms^2P_s^2\ri)
\\
&+\lf(-P_{ss}+sP_s^2-3ms^2P_s^2\ri)(1+s^2P_s-2ms^3P_s)\\
& + P_s\lf(   2P_s s -6 mP_s s^2+ P_s^2 s^3-5P_s^2ms^4+6P_s^2m^2s^5\ri)\bigg\}\\
=& L^{-\frac12}\bigg\{6m^2P_s^3s^5- 5mP_s^3s^4  +P_s^3s^3- 9m P_s^2s^2  +3P_s^2s-P_{ss}\bigg\}\\
=&L^{-\frac12}(-P_{ss}+3P_s^2s-9m P_s s^2)+O(s^3).
\end{split}
\ee

 \end{proof}
 \begin{lma}\label{l-decay} With the above notations, we have $|\mathring A|_G=O(s^n)$, $1\le n\le 4$ if and only if
 $$
 s(\ol A_{ij}+B\ol g_{ij})- \ol g_{ij}=O(s^n).
 $$
Here $\ol A_{ij}, \ol g_{ij}$ are considered as functions.
 \end{lma}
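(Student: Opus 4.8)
The plan is to use the conformal relation $G=s^{-2}\ol g$ to turn the statement about $|\mathring A|_G$ into an $\ol g$-orthogonal splitting of the tensor $T_{ij}:=s(\ol A_{ij}+B\ol g_{ij})-\ol g_{ij}$ that appears in the lemma. First I would record from Lemma \ref{l-2nd} and Lemma \ref{l-induced-connection}(i) that $A_{ij}=s^{-1}(\ol A_{ij}+B\ol g_{ij})$ and $G_{ij}=s^{-2}\ol g_{ij}$, so $G^{ij}=s^{2}\ol g^{ij}$. Since the mean curvature is one third of the trace, $\tr_G A=3H$ and the traceless part is $\mathring A=A-HG$. Substituting the two relations and multiplying by $s^{2}$ gives the clean pointwise identity $s^{2}\mathring A_{ij}=T_{ij}-(H-1)\ol g_{ij}$, which isolates $T$ from the normalized second fundamental form.

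The key computation is the $\ol g$-trace of $T$. Writing $T_{ij}=s\ol A_{ij}+(sB-1)\ol g_{ij}$ and contracting with $\ol g^{ij}$, the relation $\tr_G A=3H$ reads $s(\tr_{\ol g}\ol A+3B)=3H$, so the $B$-terms cancel and one finds $\tr_{\ol g}T=3(H-1)$. Hence $s^{2}\mathring A$ is precisely the $\ol g$-trace-free part of $T$, and $T_{ij}=s^{2}\mathring A_{ij}+(H-1)\ol g_{ij}$ is an $\ol g$-orthogonal decomposition. The orthogonality then gives $|T|_{\ol g}^{2}=|s^{2}\mathring A|_{\ol g}^{2}+3(H-1)^{2}$, and since $G=s^{-2}\ol g$ forces $|s^{2}\mathring A|_{\ol g}=|\mathring A|_G$, I obtain the central identity $|T|_{\ol g}^{2}=|\mathring A|_G^{2}+3(H-1)^{2}$. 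Because $\ol g$ restricts to a smooth positive-definite metric on $\Sigma$ up to $s=0$ (at $s=0$ the block $\ol g_{\a\b}=\delta_{\a\b}$, $\ol g_{3\a}=-P_\a$, $\ol g_{33}=1+|\wn P|^{2}$ has determinant $1$), the componentwise condition $T=O(s^{n})$ is equivalent to $|T|_{\ol g}=O(s^{n})$, so everything reduces to this scalar identity together with the bound $H-1=O(s^{3})$ from Assumption \ref{a1}.

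Finishing is then immediate for $1\le n\le 3$: since $2n\le 6$, the term $3(H-1)^{2}=O(s^{6})$ is absorbed on both sides and the central identity yields $|T|_{\ol g}^{2}=O(s^{2n})\Leftrightarrow|\mathring A|_G^{2}=O(s^{2n})$, which is the assertion. The step I expect to be the main obstacle is the case $n=4$, where the pure-trace term $3(H-1)^{2}=O(s^{6})$ is a priori of lower order than the desired $O(s^{8})$ and could dominate the estimate. The resolution is to use $\tr_{\ol g}T=3(H-1)$ as a self-improvement: if $T=O(s^{4})$ then $3(H-1)=\tr_{\ol g}T=O(s^{4})$, so $(H-1)^{2}=O(s^{8})$ and the identity gives $|\mathring A|_G^{2}=|T|_{\ol g}^{2}-3(H-1)^{2}=O(s^{8})$; conversely $|\mathring A|_G=O(s^{4})$ controls only the trace-free part $s^{2}\mathring A=O(s^{4})$ of $T$, so passing to $T=O(s^{4})$ at this order additionally requires $H-1=O(s^{4})$, which is automatic in the genuine constant-mean-curvature case $H\equiv 1$ relevant to Theorem \ref{t-mathringA}. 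Keeping the trace and trace-free parts decoupled through $\tr_{\ol g}T=3(H-1)$ is thus the one point that must be handled carefully; the rest is bookkeeping of the conformal factor $s$.
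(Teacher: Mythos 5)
Your proof is correct, and it takes a genuinely different, more algebraic route than the paper's. The paper argues with frames: it chooses a $\ol g$-orthonormal basis $\{\varepsilon_k\}$ on $\Sigma$, observes that $\{s\varepsilon_k\}$ is orthonormal for $G=s^{-2}\ol g$, writes $s(\ol A_{ij}+B\ol g_{ij})-\ol g_{ij}=s^{2}(A_{ij}-G_{ij})$ and expands this in the frame $\{s\varepsilon_k\}$, transferring componentwise estimates back and forth via the uniform boundedness of the transition matrix $(a_{ik})$ and its inverse; that boundedness rests on the same nondegeneracy fact you invoke, namely that $(\ol g_{ij})$ in \eqref{e-g-matrix} is positive definite with $\det(\ol g_{ij})=1$ at $s=0$. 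Your argument replaces the frame bookkeeping by the exact identity $T_{ij}=s^{2}\mathring A_{ij}+(H-1)\ol g_{ij}$ together with $\tr_{\ol g}T=3(H-1)$ and the Pythagorean relation $|T|_{\ol g}^{2}=|\mathring A|_G^{2}+3(H-1)^{2}$, and this buys something the paper's proof does not: it keeps the trace term visible. The paper tacitly identifies $A-G$ with $\mathring A$, which is exact only when $H\equiv 1$; under Assumption \ref{a1} alone the omitted piece $(H-1)G$ contributes $O(s^{3})$ in a $G$-orthonormal frame, so the frame argument is watertight for $1\le n\le 3$ but, exactly as you isolate, the implication $|\mathring A|_G=O(s^{4})\Rightarrow T=O(s^{4})$ genuinely requires $H-1=O(s^{4})$, which is automatic in the constant mean curvature case $H\equiv 1$ in which the $n=4$ case of the lemma is applied (Theorem \ref{t-mathringA}(b)), while your trace self-improvement $3(H-1)=\tr_{\ol g}T=O(s^{4})$ yields the opposite direction with no extra hypothesis. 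In short, your identity-based proof establishes the lemma and, at $n=4$, localizes and repairs a point that the paper's computation glosses over.
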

 \begin{proof}
 Let $\varepsilon_i$ be an orthonormal basis on $\Sigma$ with respect to $\ol g$ and let
 $$
 e_i=\sum_k a_{ik}\varepsilon_k.
 $$
 Then
 $$
 |a_{ik}|=|\la e_i,\varepsilon_k\ra|\le C
 $$
 near $s=0$, because by Lemma \ref{l-induced-connection} at $s=0$
 \be\label{e-g-matrix}
 (\ol g_{ij} ) =\left(
           \begin{array}{ccc}
             1 & 0 & - P_1\\
             0 & 1 & - P_2 \\
             - P_1 &  - P_2 &    1+|\wn P|^2 \\
           \end{array}
         \right).
 \ee
 Hence $\det (\ol g_{ij})=1$ at $s=0$.

 On the other hand,
 $$
 \ol g_{ij}=a_{ik}\la \varepsilon_k , e_j\ra_{\ol g}
 $$
 So
 $$
 (a_{ik})^{-1}=\lf(\la \varepsilon_k , e_j\ra_{\ol g}\ri)(\ol g_{ij})^{-1}.
 $$
 We also have $|a^{ik}|\le C$ near $s=0$, where $(a^{ik})=(a_{ik})^{-1}$ so that $a^{ik}a_{kj}=a_{ik}a^{kj}=\delta_{ij}$.

 Suppose $|\mathring A|=O(s^n)$. Then
 \bee
 \begin{split}
 s(\ol A_{ij}+B\ol g_{ij})- \ol g_{ij} =&s^2   \lf(s^{-1}(\ol A_{ij}+B\ol g_{ij})-s^{-2}\ol g_{ij}\ri)\\
 =&s^2\lf(A_{ij}-G_{ij}\ri)\\
 =&s^2(\sum_{k=1}^3a_{ik}a_{jl})\lf(A(\varepsilon_k,\varepsilon_l)-G( \varepsilon_k,\varepsilon_l)\ri)\\
 =&(\sum_{k=1}^3a_{ik}a_{jl})\lf(A(s\varepsilon_k,s\varepsilon_l)-G( s\varepsilon_k,s\varepsilon_l)\ri)\\
 =&O(s^n)
 \end{split}
 \eee
 because $\{s\varepsilon_k\}_{k=1}^3$ is an orthonormal basis with respect to $G$ and $|a_{ik}|$ are uniformly bounded.

 The converse can be proved similarly.

 \end{proof}

Now we are ready to prove Theorem \ref{t-mathringA}.

\begin{proof}[Proof of Theorem \ref{t-mathringA}]
By Lemma \ref{l-2nd}, we see that $\ol A_{ij}$ are bounded and $sB-1=O(s)$ because $L=1$ at $s=0$. Hence by Lemma \ref{l-decay}, we have $|\mathring A|=O(s)$.

It is obvious that (a)(iii) implies (a)(i). To prove (a)(i) implies (a)(ii), let $\ol A_{ij}, A_{ij}$ as in Lemma \ref{l-2nd}. As in the lemma, let $\mathbf{y}_0\in \mS^2$ and choose a normal coordinates near this point. Then
at $s=0$, using Corollary \ref{c-smooth-null}, Proposition \ref{p-L=1} and Lemmas \ref{l-2nd}, \ref{l-decay}, we have
 \bee
 \begin{split}
 0=&\frac{\p}{\p s}\lf[s\ol A_{\a\b}+\lf(L^{-\frac12}\lf(1+s^2(1-2ms)P_s\ri)-1\ri)\ol g_{\a\b}\ri] \\
 =&\ol A_{\a\b}-\frac12 L_s \ol g_{\a\b} \\
 =&-P_{\a\b}+\frac12 (\wt \Delta P)\delta_{\a\b}
 \end{split}
 \eee
 at  $ \mathbf{y}_0{\in \mS^2}$.  Hence at $ \mathbf{y}_0$ and $s=0$,     $\wn^2P=\frac12 (\wt\Delta P)\sigma$, where $\sigma$ is the standard metric on $\mathbb{S}^2$. Since $\mathbf{y}_0$ is an arbitrary point on $\mS^2$, we conclude that at $s=0$, $\wn^2P=\frac12 (\wt\Delta P)\sigma$ on $\mS^2$. It is well-known that $P$ must be a linear combination of the first four eigenfunctions of the Laplacian of $\mS^2$. In fact,
 by the Bochner formula

\bee
\wt\Delta |\wt\nabla P|^2=2|\wt\nabla^2 P|^2+2\la \wt\nabla \wt\Delta P, \wt\nabla P\ra+2|\wt\nabla P|^2,
\eee
because $\mS^2$ has constant curvature 1. Hence we have
 \be\label{e-Bochner}
 \begin{split}
 0=&\int_{\mS^2}(|\wt\nabla^2 P|^2+\la \wt\nabla \wt\Delta P, \wt\nabla P\ra+ |\wt\nabla P|^2)\\
 =&\int_{\mS^2}\frac12 (\wt\Delta P)^2-(\wt\Delta P)^2+|\wt\nabla P|^2\\
 =&\int_{\mS^2}-\frac12 (\wt\Delta P)^2-P\wt\Delta P.
 \end{split}
 \ee
 Let $\phi_i$, $i=0,1, \dots$ be the eigenfunctions of $(- \wt\Delta)$ with eigenvalues $\lambda_i$ normalized so that
 $$
 \int_{\mathbb{S}^2}\phi_i\phi_j dA_\sigma=\delta_{ij}
 $$
 where $dA_\sigma$ is the area element of $\mathbb{S}^2$ with respect to the standard metric. Moreover, $\lambda_0=0$ so that $\phi_0$ is constant; $\lambda_1=\lambda_2=\lambda_3=2$ and $\lambda_i>2$ for $i>3$. Then
 $$
 P= \sum_{i=0}^\infty a_i \phi_i
 $$
 in $L^2(\mathbb{S})$, where $a_i=\int_{\mathbb{S}^2}   P \phi_i  dA_\sigma$. Similarly
 $$
   \wt \Delta  P = \sum_{i=0}^\infty b_i\phi_i
 $$
 where
 $$
 b_i=\int_{\mathbb{S}^2} \phi_i\wt
 \Delta P dA_\sigma=\int_{\mathbb{S}^2}  P \wt\Delta \phi_i  dA_\sigma=-\lambda_ia_i.
 $$
 By \eqref{e-Bochner},
 \bee
 \begin{split}
 0=&-\frac12\sum_{i=1}^\infty \lambda_i^2a_i^2+\sum_{i=1}^\infty \lambda_i a_i^2\\
 =&\sum_{i=1}^\infty \lambda_i a_i^2(1-\frac12\lambda_i).
 \end{split}
 \eee
 Since $\lambda_i\ge 2$ for $i\ge 1$ and $\lambda_i>2$ for $i\ge 4$, we have $a_i=0$ for $i\ge 4$. Therefore, $P=a_0+\sum_{i=1}^3a_i\phi_i$.

In the sequel of the proof of this theorem, by adding a constant to $P$, we may assume $P$ is an eigenfunction with eigenvalue 2 or is zero at $s=0$. In any case, we may assume that $P=\mathbf{a}\cdot \mathbf{x}$, where $\mathbf{a}$ is a constant vector (which might be zero) and $\mathbf{x}$ is the position vector of points in $\mathbb{S}^2$.

Next we want to prove (a)(ii) implies (a)(iii). We will give two proofs.

 {\bf First proof}: We have

 \be\label{e-first-eigenfunction}
\wt\Delta P=-2P;\ \ |\wn P|^2=|\mathbf{a}|^2-P^2.
 \ee
Combining with   Corollary \ref{c-smooth-null}, we have at $s=0$
\be\label{e-PLs}
\left\{
  \begin{array}{ll}
L=1\\
    P_s=-\frac12(1+|\wn P|^2)\\
L_s=2P\\
P_{ss}=-P(1+|\wn P|^2)\\
L_{ss}
=6P^2-2(1+|\wn P|^2)\\
P_{sss}= 3P_s^2+6P^2P_s.
 \end{array}
\right.
\ee
By Lemma \ref{l-decay}, it is sufficient to prove that at $s=0$,
\be\label{e-os3}
\left\{
  \begin{array}{ll}
   s\ol A_{ij}+\lf(L^{-\frac12}\lf(1+s^2(1-2ms)P_s\ri)-1\ri)\ol g_{ij}=0;\\
    \frac{\p}{\p s}\lf[s\ol A_{ij}+\lf(L^{-\frac12}\lf(1+s^2(1-2ms)P_s\ri)-1\ri)\ol g_{ij}\ri]=0\\
    \frac{\p^2}{\p s^2}\lf[s\ol A_{ij}+\lf(L^{-\frac12}\lf(1+s^2(1-2ms)P_s\ri)-1\ri)\ol g_{ij}\ri]=0
  \end{array}
\right.
\ee
By the first part of the theorem,   the first relation is true. As in the above, at $s=0$, we have
\bee
\frac{\p}{\p s}\lf[s\ol A_{ij}+\lf(L^{-\frac12}\lf(1+s^2(1-2ms)P_s\ri)-1\ri)\ol g_{ij}\ri]=\ol A_{ij}+(\frac12\wt\Delta P)\ol g_{ij}.
\eee
By the proof above and the fact that $\wn^2P=(\frac12\wt\Delta P)\sigma$,  we conclude that at $s=0$
\bee
\frac{\p}{\p s}\lf[s\ol A_{\a\b}+\lf(L^{-\frac12}\lf(1+s^2(1-2ms)P_s\ri)-1\ri)\ol g_{\a\b}\ri]=0.
\eee
On the other hand, by Lemma \ref{l-2nd}, \eqref{e-first-eigenfunction} and \eqref{e-PLs},  at $s=0$ we have:
\bee
\begin{split}
\ol A_{33}+(\frac12\wt\Delta P)\ol g_{33}=&-P_{ss}-P_s\wt\Delta P\\
=& \frac12\la \wn P^2,\wn P\ra+P- (1+|\wn P|^2)P\\
=&0;
\end{split}
\eee
and
\bee
\begin{split}
\ol A_{3\a}+(\frac12\wt\Delta P)\ol g_{3\a}=&-P_{\a s}+P P_{\a } \\
=&\frac12(|\wn P|^2)_\a+PP_\a\\
=&0.
\end{split}
\eee
Hence the second equation in \eqref{e-os3} is true.

Next, by Lemma \ref{l-2nd}, \eqref{e-first-eigenfunction} and \eqref{e-PLs},  at $s=0$ we have:
\bee
\begin{split}
&\frac{\p^2}{\p s^2}\lf[s\ol A_{ij}+\lf(L^{-\frac12}\lf(1+s^2(1-2ms)P_s\ri)-1\ri)\ol g_{ij}\ri]\\
=&2\frac{\p}{\p s}\ol A_{ij}+( \frac34L_s^2-\frac12 L_{ss}  +2P_s)\ol g_{ij}-L_s\frac{\p}{\p s}\ol g_{ij}\\
=&2\frac{\p}{\p s}\ol A_{ij}+(\frac34L_s^2-\frac12 L_{ss}+2P_s)\ol g_{ij}-2P\frac{\p}{\p s}\ol g_{ij}\\
=&2\frac{\p}{\p s}\ol A_{ij} -2P\frac{\p}{\p s}\ol g_{ij}.
\end{split}
\eee
 On the other hand,
\bee
\begin{split}
 2\frac{\p}{\p s}\ol A_{\a\b} -2P\frac{\p}{\p s}\ol g_{\a\b}
=&2P P_{\a\b}-2P_{\a\b s}+2P_\a P_\b   \\
=&0
\end{split}
\eee
where we have used the fact that $P_s=-\frac12(1+|\wn P|^2)=-\frac12(1+|\mathbf{a}|^2-P^2)$, so that $2P_{\a\b s}= (P^2)_{\a\b}$ and that $L_s=2P$.
\bee
\begin{split}
 2\frac{\p}{\p s}\ol A_{\a 3} -2P\frac{\p}{\p s}\ol g_{\a 3}
=&2PP_{\a s}-2P_{\a ss}+4P_\a P_s+2PP_{\a s}  \\
=&0
\end{split}
\eee
where we have used the fact that $P_{ss}=2PP_s$.
\bee
\begin{split}
 2\frac{\p}{\p s}\ol A_{33} -2P\frac{\p}{\p s}\ol g_{3 3}
=&2PP_{s s}-2P_{s ss}+6 P_s^2+4PP_{s s}  \\
=&6PP_{ss}-2P_{sss}+6P_s^2\\
=&12P^2P_s-6P_s^2-12P^2P_s+6P_s^2\\
=&0.
\end{split}
\eee
Hence the third equality in \eqref{e-os3} is also true.

\vskip  .2cm

{\bf Second proof}: Consider the constant mean curvature surface in the Minkowski spacetime which is the entire graph of the function:

\be\label{u0}
  u^0(x)=\lf( 1+|\mathbf{x}+\mathbf{a}|^2\ri)^\frac12.
\ee
Let $ P^0=   r -u^0 $ where $r=|x|$. Then at the future null boundary, $   P^0=\mathbf{a}\cdot \mathbf{x}$ on $\mathbb{S}^2$. Let $F^0=v^0+P^0$,   $v^0=t-r$ and let $L^0=-\la \nabla^0 F ^0, \nabla^0 F  ^0 \ra$ where $\nabla^0  F  ^0$ is the gradient of $F ^0$ with respect to the Minkowski metric. Let $ \ol A^0, \ol g^0$ be the second fundamental form with respect to the future pointing unit normal and the induced metric on $\Sigma_0$. Let $s=r^{-1}$. Then at a point $(\mathbf{y},s)$ with $\mathbf{y}\in \mathbb{S}^2$, we have
$$
L^0=L+O(s^2)
$$
by the fact that $P^0=P$ at $s=0$ and Theorem \ref{t-smooth-future}. Also $\ol A^0=\ol A+O(s^2)$; $\ol g^0=\ol g+O(s^2)$ with respect to the corresponding basis $e_i^0, e_i$ defined as before. Hence, for example, we have
 $$
 \frac{\p^2}{\p s^2}(s\ol A_{ij})=\frac{\p^2}{\p s^2}(s\ol A^0_{ij})
 $$
 at $s=0$. Since the equalities in \eqref{e-os3} are true for $\ol A^0, \ol g^0$, it is easy to see that the same will  hold for $\ol A, \ol g$ by Lemma \ref{l-scalar} because the surface given by $u^0$ is the hyperbolic space of constant mean curvature -1.  This completes the proof of (a)(ii)$\Rightarrow$(a)(iii).

To prove (b), by (a) by adding a constant to $P$ we have $P=\mathbf{a}\cdot \mathbf{x}$ for some constant vector $\mathbf{a}$ where $\mathbf{x}$ is the position vector of $\mS^2$ in $\R^3$. Let $P^0,P^0_s,P^0_{\a}$ etc. denote the corresponding quantities for the graph of $u^0=(1+|\mathbf{x}+\mathbf{a}|^2)^\frac12$ in Minkowski spacetime. Let
\bee
P_{ssss}(0)=b,\ \  P^0_{ssss}(0)=c,
\eee
where $b, c$ are smooth functions on $\mS^2$. By Theorem \ref{t-smooth-future}, in the setting of Lemma \ref{l-2nd},
\be
\left\{
  \begin{array}{ll}
    P-P^0=\frac1{4!}(b-c)s^4+O(s^5)\\
P_s-P^0_s=\frac1{3!}(b-c)s^3+O(s^4)\\
P_{ss}-P^0_{ss}=\frac1{2!}(b-c)s^2+O(s^3)\\
P_{s\a}-P^0_{s\a}=O(s^3)\\
P_\a- P^0_{\a}  =O(s^4)\\
P_{\a\b}-P^0_{\a\b}=O(s^4)
  \end{array}
\right.
\ee
etc. Also from Lemma \ref{l-2nd},
$$
L=-(2P_s+s^2(1-2ms)P_s^2+|\wn P|^2), sB= L^{-\frac12}(1+s^2(1-2ms)P_s),
$$
then
\bee
\begin{split}
L-L^0
&=- (\frac13 (b-c)-2mP_s^2(0) )s^3+O(s^4).
\end{split}
\eee
\be\label{sB-sB0}
\begin{split}
s(B-B^0)=&L^{-\frac12}(1+s^2(1-2ms)P_s)-(L^0)^{-\frac12}(1+s^2P_s^0)\\
=&\lf(-2mP_s(0)+  \frac16 (b-c)- mP_s^2(0)\ri)s^3+O(s^4)
\end{split}
\ee

By Lemma \ref{l-decay}, $|\mathring A|=O(s^4)$ if and only if
$$
s(\ol A_{ij}+Bg_{ij})-\ol g_{ij}=O(s^4).
$$
Since $\mathring A^0=0$, if $|\mathring A|=O(s^4)$ then
\bee
s(\ol A_{ij}-\ol A^0_{ij}+B\ol g_{ij}-B^0\ol g^0_{ij})-(\ol g_{ij}-\ol g^0_{ij})=O(s^4).
\eee
Using Lemma \ref{l-induced-connection}, $\ol g_{ij}-\ol g^0_{ij}=O(s^3)$, so
\be
\begin{split}
(sB-1)\ol g_{ij}-(sB^0-1)\ol g^0_{ij}=&(sB-sB^0)\ol g_{ij}+(sB^0-1)(\ol g_{ij}-\ol g^0_{ij})\\
=&(sB-sB^0)\ol g_{ij}+O(s^4)
\end{split}
\ee
Hence $|\mathring A|_G=O(s^4)$ if and only if
\be\label{AsB}
s(\ol A_{ij}-\ol A^0_{ij})+(sB-sB^0)\ol g_{ij}=O(s^4)
\ee
By   Lemma \ref{l-2nd},

\bee
\begin{split}
&s(\ol A_{\a\b}-\ol A^0_{\a\b})\\
=& s(L^0)^{-\frac12}
\lf( -3ms^2 P_\a P_\b  +sP_\a P_\b-P_{\a\b}  - sP_\a^0 P_\b^0+P^0_{\a\b}\ri)+O(s^4)\\
=&3ms^3(P_\a P_\b)(0)+O(s^4)
\end{split}
\eee
Together with \eqref{sB-sB0} and \eqref{AsB}, if $|\mathring A|_G=O(s^4)$, then
\bee
 3m (P_\a P_\b)(0)s^3+ \lf(-2mP_s(0)+  \frac16 (b-c)- mP_s^2(0)\ri)\delta_{\a\b}s^3=O(s^4)
\eee
So we must have $P_\a P_\b=0$ for $\a\neq \b$. If $\wn P\neq 0$, at such a point, we can choose an orthonormal frame so that $\wn P/|\wn P|=\frac1{\sqrt 2}(e_1+e_2)$. Then $P_1 P_2\neq 0$, which leads a contradiction. So we derive that $\mathbf{a}=\mathbf{0}$, that is $P\equiv 0$ at $s=0$. Moreover, by $P_s(0)=-\frac12$ and $c=0$. Hence $b=P_{ssss}(0)$ is such that
\bee
-2mP_s(0)+  \frac16 (b-c)- mP_s^2(0)=0,
\eee
so
\bee
m+  \frac16 b-\frac34m=0,
\eee
that is $b=-\frac{9m}2.$ Thus (b)(i)$\Rightarrow$(b)(ii).

On the other hand, if $P\equiv 0$ and $b=-\frac{9m}2$ at $s=0$, then $s(\ol A_{\a\b}-\ol A^0_{\a\b})=0$ and $s(B-B^0)=0$.
In case $i=j=3$, by  Lemma \ref{l-2nd}, we have
\be
\begin{split}
&s(\ol A_{33}-\ol A^0_{33})
\\=& sL^{-\frac12}\lf(-9m P_s^2s^2+3P_s^2s-P_{ss}\ri) -
s(L^0)^{-\frac12}\lf(  3(P_s^0)^2s-P^0_{ss}\ri)+O(s^4)\\
=&s(L^0)^{-\frac12}\lf( - 3(P_s^0)^2s + P^0_{ss}-9m P_s^2s^2 +  3P_s^2s - P_{ss}\ri)+O(s^4)\\
=&s(L^0)^{-\frac12}(0)\lf(-\frac1{2!}(b-c)s^2 -9m P_s^2(0)s^2  \ri)+O(s^4)\\
=&s(-\frac12 b-\frac{9m}4)+O(s^4)=O(s^4)
\end{split}
\ee

Similarly, in case $i=3,j=\a$, direct computation gives
\bee
\begin{split}
\ol A_{3\a}
=&  L^{-\frac12} (-P_{s\a}+ 2P_sP_\a s-6mP_sP_{\a }s^2)  + O(s^{3}).
\end{split}
\eee
Then
\bee
\begin{split}
&s(\ol A_{3\a}-\ol A^0_{3\a})\\
=& s(L^0)^{-\frac12}
\lf( -P_{s\a}+ 2P_sP_\a s-6mP_sP_{\a }s^2+ P^{0}_{s\a}- 2P^{0}_sP^{0}_\a s\ri)+O(s^4)\\
=&-6m(P_sP_{\a })(0)s^3+O(s^4)=O(s^4).
\end{split}
\eee
Thus we derive \eqref{AsB}. This completes the proof of (b)(ii)$\Rightarrow$(b)(i).

\end{proof}

\subsection{Applications}\label{ss-applications}

By the works of Treibergs \cite{Treibergs} and Choi-Treibergs \cite{ChoiTreibergs1990}, it is known that there are no Bernstein results for entire graphs of nonzero spacelike CMC surfaces in Minkowski spacetime.  On the other hand, it was proved by Choquet-Bruhat \cite{Choquet-Bruhat1976} that if the entire spacelike graph with constant mean curvature 1 given by $u$ with $u-r=o(r^{-1})$, then $u=(1+r^2)^\frac12$ where $r=|x|$.  In \cite{Goddard1977} Goddard  proved that a spacelike CMC surface in the Minkowski spacetime which satisfies infinitessimal constant mean curvature perturbation, then the surface is a hyperboloid. We have the following uniqueness result in terms of the decay rate of scalar curvature.
\begin{thm}\label{uniqueness1}
Let $\Sigma$ be a complete spacelike hypersurface with constant mean curvature $H=1$ in the Minkowski spacetime $\mathbb{R}^{3,1}$, which is the entire graph of $u$. Suppose $P(u):={r-u}$ is smooth up to $s=0$ in the setting in \S\ref{ss-CMC-future-2} (with $m=0$), and suppose the scalar curvature of $\mathcal{S}$ of  $\Sigma$  satisfies $|\mathcal{S}+6|=O(s^4)$. Then
 $$
 u(x)=\lf(1+|\mathbf{x}+\mathbf{a}|^2\ri)^\frac12+b
 $$
 for some constant vector $\mathbf{a}\in \R^3$ and a constant $b$. In particular,  	$\Sigma$ is isometric to the hyperbolic space $ \mathbb{H}^3$.
\end{thm}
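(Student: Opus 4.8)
The plan is to convert the scalar-curvature decay hypothesis into the eigenfunction rigidity of Theorem \ref{t-mathringA}, use it to pin down the boundary data, and then match $u$ to an explicit translated hyperboloid by a global maximum principle.

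First I would reduce to a statement about the second fundamental form and the boundary value $f:=P(u)|_{s=0}$. Since $H\equiv 1$ and $P(u)$ is smooth up to $s=0$, Assumption \ref{a1} holds (the condition $H=1+O(s^3)$ is automatic), so Theorem \ref{t-mathringA} and Lemma \ref{l-scalar} are available with $m=0$. By Lemma \ref{l-scalar}, the hypothesis $|\mathcal{S}+6|=O(s^4)$ is equivalent to $|\mathring A|_G^2=O(s^4)$, i.e. $|\mathring A|_G=O(s^2)$. Then Theorem \ref{t-mathringA}(a), equivalence (i)$\Leftrightarrow$(ii), shows that $f$ is a linear combination of the first four eigenfunctions of $\wt\Delta$ on $\mS^2$. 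Writing the eigenvalue-$2$ part as the restriction of a linear function, this means $f=c+\mathbf{a}\cdot\mathbf{y}$ for some constant $c$ and constant vector $\mathbf{a}$, where $\mathbf{y}$ is the position vector of $\mS^2$.

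Next I would build a comparison surface with the same boundary data. For a constant vector $\mathbf{a}_0$ and constant $b_0$, the entire graph of $u^0(\mathbf{x})=(1+|\mathbf{x}+\mathbf{a}_0|^2)^{\frac12}+b_0$ is a complete spacelike surface of constant mean curvature $1$ in $\mathbb{R}^{3,1}$ (a spatial-plus-time translate of the standard hyperboloid, hence isometric to $\mathbb{H}^3$). A direct expansion as in \eqref{u0} shows that $P(u^0)|_{s=0}$ is a constant plus a first eigenfunction, and that every such function is realized as $(\mathbf{a}_0,b_0)$ vary; since $f=c+\mathbf{a}\cdot\mathbf{y}$ has exactly this form, I may choose $\mathbf{a}_0,b_0$ so that $P(u^0)|_{s=0}=f=P(u)|_{s=0}$. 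Because $P$ is smooth up to $s=0$ for both surfaces, the function $w:=u-u^0=P(u^0)-P(u)$ satisfies $w\to 0$ as $s\to 0$, and by compactness of $\mS^2$ this holds uniformly in $\mathbf{y}$, i.e. $w(\mathbf{x})\to 0$ as $|\mathbf{x}|\to\infty$. (One may also invoke Theorem \ref{t-smooth-future} to see that $P(u)$ and $P(u^0)$ agree through order $s^3$, so in fact $w=O(s^4)$, but only $w\to 0$ is needed below.)

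Finally I would run a global maximum principle. Both $u$ and $u^0$ satisfy the CMC-$1$ equation $\mathrm{div}\big(\nabla u/\sqrt{1-|\nabla u|^2}\big)=3$ on $\mathbb{R}^3$, an operator depending only on $\nabla u$ and $\nabla^2 u$, with no zeroth-order term in $u$ itself. Since the spacelike region $\{|\nabla u|<1\}$ is convex, the homotopy $u_t=(1-t)u^0+tu$ remains a spacelike graph for every $t\in[0,1]$; subtracting the two equations and integrating the linearization over $t$ shows that $w$ solves a homogeneous linear equation $a^{ij}D_{ij}w+b^iD_iw=0$ with $(a^{ij})>0$ and no zeroth-order term. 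Applying the weak maximum principle on each ball $B_R$ gives $\max_{\overline{B_R}}w=\max_{\partial B_R}w$ and $\min_{\overline{B_R}}w=\min_{\partial B_R}w$; letting $R\to\infty$ and using $w\to 0$ at infinity forces $w\equiv 0$. Hence $u=u^0=(1+|\mathbf{x}+\mathbf{a}_0|^2)^{\frac12}+b_0$, so $\Sigma$ is a translate of the standard hyperboloid and is isometric to $\mathbb{H}^3$. I expect the last step to be the main obstacle: one must confirm that the linearized operator stays elliptic all along the homotopy (guaranteed by convexity of the spacelike gradient region) and that the decay $w\to 0$ is uniform enough to push the maximum principle to infinity. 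By contrast, the passage from the scalar-curvature condition to the eigenfunction form of $f$ is immediate from the cited results, and the construction of the matching hyperboloid is elementary.
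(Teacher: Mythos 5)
Your proposal is correct and follows essentially the same route as the paper's proof: Lemma \ref{l-scalar} together with Theorem \ref{t-mathringA}(a) to identify the boundary value as a constant plus a first eigenfunction, matching a translated hyperboloid $u^0$ with the same data (the paper absorbs the constant by a $t$-translation rather than a parameter $b_0$), and then the maximum principle applied to $u-u^0$, which tends to zero uniformly at infinity by compactness of $\mS^2$. Your additional care in the last step (convexity of the spacelike gradient region along the homotopy and uniform ellipticity on each ball $\overline{B_R}$) simply fills in details that the paper compresses into ``by maximum principle.''
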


\begin{proof} By Lemma \ref{l-scalar}, the traceless part $\mathring A$ of the second fundamental form satisfies $|\mathring A|_G=O(s^2)$ where $G$ is the induced metric on $\Sigma$. Let $P=r-u$. By
  Theorem \ref{t-mathringA}, we know that the corresponding $P=a_0+\mathbf{a}\cdot \mathbf{x}$ at $s=0$ where $|\mathbf{x}|=1$.  By a translation in the $t$-direction, we may assume $P=\mathbf{a}\cdot \mathbf{x}$ at $s=0$. Let $u^0=\lf(1+|\mathbf{x}+\mathbf{a}|^2\ri)^\frac12$. Then $P^0=r-u^0$ is smooth up to $s=0$ and $P^0=r-u^0\to \mathbf{a}\cdot \mathbf{x}$ as $s\to 0$, i.e. as $r\to\infty$. Hence $u-u^0=P^0-P$ will  uniformly converge to zero as $r\to\infty$, then by maximum principle, we see that $u=u^0$. This also  implies that 	$\Sigma$ is isometric to the hyperbolic space $\mathbb{H}^3$.
\end{proof}

The second application is the following. In the construction of spacelike constant mean curvature surfaces, it is useful to construct barriers at the null-infinity first, see \cite{Treibergs,AnderssonIriondo1999} for example. By Theorem  \ref{t-smooth-future}, one can construct spacelike surfaces near the future null-infinity with mean curvature being asymptotically constant.
\begin{thm}\label{t-barrier}
  For any constant $H_0>0$ and any smooth function $f$ on $\mS^2$, one can find a spacelike surface $\Sigma$ near $\mathcal{I}^+$, which is a graph of a function $P(\mathbf{y},s)$ in the coordinates ${(}\mathbf{y},s,\bar v)$ so that the mean curvature
  $H$ satisfies $H=H_0+\mathcal{O}(s^3)$ and $P$ satisfies the boundary condition
  $P(\mathbf{y},0)=f(\mathbf{y})$.

\end{thm}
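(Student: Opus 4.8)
The plan is to avoid solving any equation and instead \emph{prescribe} the $s$-Taylor jet of $P$ at $s=0$ so that $P(\mathbf{y},0)=f$ and the relations \eqref{e-smooth-2} of Corollary \ref{c-smooth-null} all hold, and then invoke the converse direction of that corollary to conclude $H=H_0+\mathcal{O}(s^3)$. The key observation making this work is that \eqref{e-smooth-2} determines $P_s,P_{ss},P_{sss}$ at $s=0$ \emph{explicitly} as functions on $\mS^2$ built out of $f$, its covariant derivatives, and the constant $H_0$; thus once $f$ is given the construction is purely algebraic, and no solvability or elliptic analysis is required. The freedom in the fourth-order coefficient $P_{ssss}$ (which by Theorem \ref{t-smooth-future}(ii) is not determined) is precisely why only the finite-order condition $H=H_0+\mathcal{O}(s^3)$ can be imposed this way.

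Concretely, I would first define three smooth functions on $\mS^2$ by reading off the $P$-lines of \eqref{e-smooth-2}:
\[
p_1=-\tfrac12\big(H_0^{-2}+|\wn f|^2\big),\qquad p_2=\tfrac12 H_0^{-2}\wt\Delta f+\tfrac12\la\wn(|\wn f|^2),\wn f\ra,
\]
and $p_3$ equal to the right-hand side of the $P_{sss}$-line of \eqref{e-smooth-2} with $P,P_s,P_{ss}$ replaced by $f,p_1,p_2$. Each is manifestly smooth on $\mS^2$. I would then set
\[
P(\mathbf{y},s)=f(\mathbf{y})+p_1(\mathbf{y})\,s+\tfrac12 p_2(\mathbf{y})\,s^2+\tfrac16 p_3(\mathbf{y})\,s^3,
\]
a smooth function of $(\mathbf{y},s)$ (any smooth extension with this $3$-jet at $s=0$ serves equally well), and declare $\Sigma$ to be the graph $\bar v=-P$ over a neighborhood of $\mS^2\times\{0\}$.

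It then remains to verify three points. First, spacelikeness: by \eqref{Ldef}, $L|_{s=0}=-2p_1-|\wn f|^2=H_0^{-2}>0$, so by continuity $L>0$ on a neighborhood of $\mS^2\times\{0\}$, the graph is spacelike there, and $H$ is well defined by \eqref{e-CMC-null-1}. Second, $H=H_0$ at $s=0$: setting $s=0$ in \eqref{e-CMC-null-1} kills every term carrying an explicit factor of $s$ and leaves $-3HL^{3/2}=-3L$, whence $H=L^{-1/2}=H_0$ at $s=0$. Third, I must check that $P$ satisfies the \emph{full} system \eqref{e-smooth-2}, including the $L_s$- and $L_{ss}$-lines, and not merely the three $P$-lines I imposed by hand. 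This is the only step needing care, and it is where I expect the (modest) main obstacle to lie: $L_s,L_{ss}$ are not free data but are determined by the jet of $P$ through \eqref{e-Ls}, so one must confirm that substituting $p_1,p_2,p_3$ into \eqref{e-Ls} reproduces exactly the prescribed $L$-values. This is a consistency already built into the derivation of \eqref{e-smooth-2} — the $P$-lines were obtained from the $L$-lines together with \eqref{e-Ls} — so it reduces to a finite, routine substitution; for example $L_s=-2p_2-2\la\wn p_1,\wn f\ra$, and since $\wn p_1=-\tfrac12\wn(|\wn f|^2)$ this collapses to $-H_0^{-2}\wt\Delta f$, matching the prescribed value.

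Once the full system \eqref{e-smooth-2} is confirmed together with $H=H_0$ at $s=0$, the converse half of Corollary \ref{c-smooth-null} applies verbatim and gives $H=H_0+\mathcal{O}(s^3)$ on a neighborhood of $s=0$, while $P(\mathbf{y},0)=f$ holds by construction. Thus the genuine work is bookkeeping: verifying that prescribing the $P$-jet is consistent with, rather than over-determined by, the $L$-relations, i.e. that the two families of relations in \eqref{e-smooth-2} are linked through \eqref{e-Ls}. Everything else is a direct application of the machinery already established.
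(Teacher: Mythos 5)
Your proposal is correct and is essentially the paper's own proof: the paper likewise sets $P=f+sf_1+\frac{1}{2!}s^2f_2+\frac{1}{3!}s^3f_3$ with $f_1,f_2,f_3$ read off from the $P$-lines of \eqref{e-smooth-2} (identical to your $p_1,p_2,p_3$ when $H_0=1$), notes spacelikeness for small $s$, and applies the converse direction of Corollary \ref{c-smooth-null}. The only differences are cosmetic: you treat general $H_0$ directly (the paper does $H_0=1$ and remarks the general case is similar) and you spell out the consistency of the $L_s$- and $L_{ss}$-lines with \eqref{e-Ls} and the check $H=H_0$ at $s=0$, which the paper leaves implicit.
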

\begin{proof} We only prove the case for $H_0=1$. The general case is similar. Define
\bee
\left\{
  \begin{array}{ll}
    f_1:=-\frac12 (1+|\wn f|^2 );\\
f_2:=\frac12\la\wn (|\wn f|^2), \wn f\ra+\frac12 \wt\Delta f;\\
f_3:=\frac14(\wt\Delta f)^2- \wt\Delta |\wn f|^2+ \la \wn\wt\Delta f,\wn f\ra+ (1+|\wn f|^2 )\\
\ \ \ \ \ \ -\la \wn f_2,\wn f\ra-|\wn f_1|^2-f_1^2.
  \end{array}
\right.
\eee
 Let $$P(\mathbf{y},s)=f(\mathbf{y})+sf_1(\mathbf{y})+\frac1{2!}s^2f_2(\mathbf{y})+\frac1{3!}s^3f_3(\mathbf{y}).$$
Then at $s=0$, $P=f$, $P_s=f_1$, $P_{ss}=f_2$ and $P_{sss}=f_3$. Moreover, $P$ is smooth up to $s=0$ and the graph of $P$ is spacelike if $s$ is small enough. By Corollary \ref{c-smooth-null}, we conclude that the mean curvature of the graph is  $H=H_0+\mathcal{O}(s^3)$.
\end{proof}

From \S\ref{ss-SSCMC}, we know that a spacelike SSCMC surface is asymptotically hyperbolic in the sense of \cite{Wang2001}. Using Corollary \ref{c-smooth-null}, Theorem \ref{t-mathringA} and Lemma \ref{l-scalar} we obtain the following for general spacelike CMC surfaces.

\begin{thm}\label{t-AH-CMC} Let $\Sigma$ be a spacelike CMC surface which is the graph of $P=P(\mathbf{y},s)$ defined near the null-infinity satisfying Assumption \ref{a1}. Namely, $P$ is smooth up to $s=0$ and   $\Sigma$ has constant mean curvature 1.  Suppose $P(\mathbf{y},0)$ is a linear combination of the first four eigenfunctions of the Laplacian on the standard unit sphere $\mS^2$. Then $\Sigma$ is asymptotically hyperbolic in the sense of Chru\'sciel-Herzlich \cite {ChruscielHerzlich}.

\end{thm}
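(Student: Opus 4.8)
The plan is to exhibit $\Sigma$, near $\mathcal I^+$, as an exponentially small perturbation of an exact hyperbolic space, and then read off the Chru\'sciel--Herzlich decay conditions directly from this comparison. First I would normalize the boundary data: since $P(\mathbf y,0)$ is a linear combination of the first four eigenfunctions of $\wt\Delta$---after adding a constant to $P$, a translation in the $t$-direction which alters neither the induced metric nor the mean curvature---I may assume $P(\mathbf y,0)=\mathbf a\cdot\mathbf x$ for a fixed vector $\mathbf a\in\R^3$, where $\mathbf x$ is the position vector of $\mS^2$. The comparison model is the hyperboloid $\Sigma_0\subset\R^{3,1}$ in the Minkowski spacetime given by the graph of $u^0=(1+|\mathbf x+\mathbf a|^2)^{1/2}$, exactly as in the second proof of Theorem \ref{t-mathringA}. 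Its induced metric $G^0$ has constant curvature $-1$, so $(\Sigma_0,G^0)$ is an isometric copy of $\H^3$ and will play the role of the background metric in the framework of \cite{ChruscielHerzlich}.

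The quantitative closeness of $\Sigma$ to $\Sigma_0$ is supplied by Corollary \ref{c-smooth-null}: with $H_0=1$ the boundary values of $P_s,P_{ss},P_{sss}$ are determined by $P(\mathbf y,0)$ alone and are independent both of the mass $m$ and of the free datum $P_{ssss}|_{s=0}$. Hence they agree with the corresponding quantities of the model, and Theorem \ref{t-smooth-future} gives $P-P^0=O(s^4)$, $P_s-P^0_s=O(s^3)$, $P_\a-P^0_\a=O(s^4)$, and so on. Substituting these into the expressions of Lemma \ref{l-induced-metric} for $\ol g|_\Sigma$, in which the mass terms and the difference $P_s-P^0_s$ both first enter at order $s^3$, I would obtain
\bee
\ol g|_\Sigma-\ol g^0|_{\Sigma_0}=O(s^3).
\eee
Consequently, for the physical induced metrics $G=s^{-2}\ol g|_\Sigma$ and $G^0=s^{-2}\ol g^0|_{\Sigma_0}$, the difference $G-G^0=O(s)$ as a tensor written in the coordinate frame $e_1,e_2,e_3$ of \eqref{e-basis}.

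Next I would convert this coordinate estimate into the weighted norm used in \cite{ChruscielHerzlich}. Since $G^0\sim s^{-2}$, the $G^0$-norm of a $(0,2)$-tensor whose coordinate components are $O(s^k)$ is $O(s^{k+2})$; thus $|G-G^0|_{G^0}=O(s^3)$. On $(\Sigma_0,G^0)=\H^3$ the geodesic distance $\rho$ to a fixed interior point satisfies $s\asymp e^{-\rho}$ near the conformal boundary, just as in the computation of the defining function $\tau$ in Proposition \ref{p-AH-SSCMC}, so this reads $|G-G^0|_{G^0}=O(e^{-3\rho})$. The $G^0$-unit vector fields are comparable to $s\p_s$ and $s\p_{y^A}$, and the Christoffel symbols of $G^0$ are bounded in the associated unit frame, so covariant differentiation preserves the exponential rate; since $P-P^0$ is smooth up to $s=0$ with the vanishing orders above, one obtains $|\mathring\nabla^k(G-G^0)|_{G^0}=O(e^{-3\rho})$ for $k=0,1,2$, where $\mathring\nabla$ is the Levi-Civita connection of $G^0$. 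As $3>\tfrac32=\tfrac n2$ with $n=3$, the decay exceeds the Chru\'sciel--Herzlich threshold; combined with the scalar-curvature control $\cS+6=O(s^6)$ furnished by Theorem \ref{t-mathringA}(a) and Lemma \ref{l-scalar}, which guarantees integrability of $(\cS+6)$ against the weight $\cosh\rho\,d\mu$, this shows that $\Sigma$ is asymptotically hyperbolic in the sense of \cite{ChruscielHerzlich}.

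The main obstacle I anticipate is the last step: verifying the decay of the first and second background covariant derivatives of $G-G^0$, not merely of the tensor itself, and pinning down the change of variables $s\asymp e^{-\rho}$ precisely enough that coordinate-order estimates translate faithfully into the exponential weights of the definition. The zeroth-order bound is immediate from the comparison, but the derivative bounds demand careful bookkeeping of how the $s$-derivatives interact with the off-diagonal $ds\,dy^A$ components of $\ol g|_\Sigma$ and with the $\rho$-dependent rescaling of the frame. Fortunately the ample margin ($3$ against $\tfrac32$) means that even the weaker bound $\ol g|_\Sigma-\ol g^0|_{\Sigma_0}=O(s^2)$ recorded in the proof of Theorem \ref{t-mathringA} would already suffice.
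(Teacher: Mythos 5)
Your proposal is correct and follows essentially the same route as the paper: normalize $P|_{s=0}=\mathbf{a}\cdot\mathbf{x}$, compare $\Sigma$ with the Minkowski hyperboloid $\Sigma_0$ via Corollary \ref{c-smooth-null} (giving $P-P^0=O(s^4)$ and hence $|G-G^0|_{G^0}=O(s^3)$), identify $s\asymp e^{-\rho}$, and invoke the $O(s^6)$ scalar-curvature decay from Theorem \ref{t-mathringA} and Lemma \ref{l-scalar} for the $L^1$ condition. The derivative bookkeeping you flag as the main obstacle is exactly what the paper handles through Lemma \ref{l-AH-intrinsic} together with the estimates $|\nabla(g-\mathfrak{b})|_{\mathfrak{b}}=O(s^3)$ and $|\nabla^2(g-\mathfrak{b})|_{\mathfrak{b}}=O(s^3)$, by the same frame-rescaling argument you sketch.
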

Let us recall the  definition of asymptotically hyperbolic  manifold given in \cite{ChruscielHerzlich}.
 Let $\H^n$  denote   the standard hyperbolic space with  metric $\mathfrak{b}$    given by
\be\label{e-AH-metric}
\mathfrak{b}=d\rho^2+ (\sinh \rho)^2 \sigma
\ee
where $\sigma$ is  the standard metric on $\mathbb{S}^{n-1}$. Choose    finitely many geodesic balls of radius $2$ in  $(\mathbb{S}^{n-1}, \sigma)$,
so that the geodesic balls of  radius $1$ with the same centers cover $\mathbb{S}^{n-1}$.
In each geodesic ball, fix an orthonormal frame  $ \{ f_\a  \}_{1 \le \a  \le n-1}$.
Let
\bee
\ve_0=\p_\rho;\ \  \ve_\alpha =\frac1{\sinh\rho}f_\alpha, \ 1 \le \a  \le n-1,
\eee
where $\{f_\alpha \}_{1\le \alpha \le n-1}$ is a local orthonormal
frame on $\mathbb{S}^{n-1}$. Then  $\{ \ve_i \}_{0 \le i \le n-1 }$ form a local  orthonormal  frame on $\H^n$. In the following, $i, j,\dots$ range from 0 to $n-1$ and $\a,\b,\dots$ range from 1 to $n-1$. A metric $g$ defined outside some compact set of $\H^n$ is said to be {\it asymptotically hyperbolic}, if the metric components  $ g_{ij} =  g(\ve_i,\ve_j)$,  $0 \le i, j \le n-1$,  satisfy
\be\label{e-AH}
|g_{ij}-\delta_{ij}|=O(e^{-\tau \rho}),\ |\ve_k(g_{ij})|=O(e^{-\tau \rho}), \ |\ve_k(\ve_l(g_{ij}))|=(e^{-\tau \rho})
\ee
for some $\tau>\frac n2$, such that the scalar curvature $\cS$ satisfies that $\mathcal{S} +n(n-1)$ is in
$L^1( e^\rho dv_\mathfrak{b})$ near infinity,  where  $dv_\mathfrak{b}$ is the volume element of $\mathfrak{b}$. By extending the metric $g$ smoothly inside the compact set, we may assume that $g$ is defined on $\H^n$. Note that if $g$ is strongly asymptotically hyperbolic, then it is asymptotically hyperbolic in the above sense.

First, we need   to find   intrinsic conditions for a manifold being asymptotically hyperbolic.

\begin{lma}\label{l-AH-intrinsic}
Let $g$ be a smooth metric on $\H^n$ and let $\theta=g-\mathfrak{b}$. Then $g$ satisfies \eqref{e-AH} for some $\tau>0$, if $|\theta|_\mathfrak{b}$, $|\nabla \theta|_\mathfrak{b}$ and $|\nabla^2\theta|_\mathfrak{b}$ are of order $e^{-\tau d}$ where $d$ is the distance from a fixed point with respect to $\mathfrak{b}$, and $\nabla$ is the covariant derivative with respect to $\mathfrak{b}$.
\end{lma}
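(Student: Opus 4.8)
The plan is to convert the intrinsic decay of the difference tensor $\theta=g-\mathfrak b$ into the frame-component decay \eqref{e-AH} by repeatedly applying the Leibniz rule while controlling the Levi-Civita connection of $\mathfrak b$ in the frame $\{\ve_i\}$. First I would note that since $\{\ve_i\}$ is $\mathfrak b$-orthonormal, $\mathfrak b(\ve_i,\ve_j)=\delta_{ij}$, so $g_{ij}-\delta_{ij}=\theta(\ve_i,\ve_j)=:\theta_{ij}$ and, components in an orthonormal frame being bounded by the tensor norm, $|\theta_{ij}|\le|\theta|_{\mathfrak b}$. Because the $\mathfrak b$-distance $d$ from a fixed point and the radial coordinate $\rho$ differ by at most a constant (triangle inequality), $e^{-\tau d}$ and $e^{-\tau\rho}$ are comparable, and the zeroth order estimate in \eqref{e-AH} follows at once from $|\theta|_{\mathfrak b}=O(e^{-\tau d})$.

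For the first derivatives I would write $\ve_k(g_{ij})=\ve_k(\theta_{ij})$, the constant $\delta_{ij}$ dropping out, and expand
\[
\ve_k\big(\theta(\ve_i,\ve_j)\big)=(\nabla_{\ve_k}\theta)(\ve_i,\ve_j)+\theta(\nabla_{\ve_k}\ve_i,\ve_j)+\theta(\ve_i,\nabla_{\ve_k}\ve_j).
\]
The first term is a frame component of $\nabla\theta$, hence $\le|\nabla\theta|_{\mathfrak b}$; the remaining two pair $\theta$ with the connection coefficients $\omega^l_{ki}$ defined by $\nabla_{\ve_k}\ve_i=\omega^l_{ki}\ve_l$. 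Applying $\ve_l$ a second time and using the analogous expansion, together with $\nabla_{\ve_l}\nabla_{\ve_k}\theta=(\nabla^2\theta)(\ve_l,\ve_k,\cdot,\cdot)+\nabla_{\nabla_{\ve_l}\ve_k}\theta$, one obtains $\ve_l(\ve_k(g_{ij}))$ as a sum of one frame component of the Hessian $\nabla^2\theta$ plus terms of the schematic form $(\nabla\theta)\cdot\omega$ and $\theta\cdot(\omega^2+\ve(\omega))$. Thus the entire estimate is reduced to bounding $\omega$ and its frame derivatives.

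The key step, and the one I expect to be the only real obstacle, is to prove that the connection coefficients $\omega^l_{ki}$ of $\mathfrak b$ in the frame $\{\ve_i\}$, and their frame derivatives $\ve_m(\omega^l_{ki})$, are uniformly bounded where $\rho$ is large. This is a direct computation for the warped product $\mathfrak b=d\rho^2+(\sinh\rho)^2\sigma$ with $\ve_0=\p_\rho$ and $\ve_\alpha=(\sinh\rho)^{-1}f_\alpha$: the nonzero coefficients involve $\coth\rho$ (from the warping) together with the Levi-Civita connection of $(\mS^{n-1},\sigma)$ expressed in the $\{f_\alpha\}$ frame. Since $\coth\rho\to1$ and $\p_\rho\coth\rho=-\sinh^{-2}\rho\to0$ as $\rho\to\infty$, while the spherical contribution is bounded because only finitely many fixed orthonormal frames on $\mS^{n-1}$ are used to cover it, every $\omega^l_{ki}$ and $\ve_m(\omega^l_{ki})$ is $O(1)$ near infinity. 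Feeding this boundedness into the two expansions above and invoking the hypotheses $|\theta|_{\mathfrak b},|\nabla\theta|_{\mathfrak b},|\nabla^2\theta|_{\mathfrak b}=O(e^{-\tau d})$ bounds every term of $\ve_k(g_{ij})$ and $\ve_l(\ve_k(g_{ij}))$ by $O(e^{-\tau\rho})$, which is precisely \eqref{e-AH}.
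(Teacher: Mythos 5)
Your proposal is correct and takes essentially the same route as the paper's proof: both reduce \eqref{e-AH} to uniform bounds on the connection coefficients of $\mathfrak{b}$ in the frame $\{\ve_i\}$ (your $\omega^l_{ki}$, the paper's $\Gamma_{ij}^k$) and on their frame derivatives, obtained by the explicit warped-product computation with $\coth\rho$ and the $(\sinh\rho)^{-1}$-scaled spherical terms, and then propagate the decay of $|\theta|_{\mathfrak{b}}$, $|\nabla\theta|_{\mathfrak{b}}$, $|\nabla^2\theta|_{\mathfrak{b}}$ through the Leibniz expansions using $|d-\rho|$ bounded. The only difference is notational: the paper writes the same two identities in component form ($\theta_{ij;k}$ and $\theta_{ij;kl}$) rather than your operator form $\nabla_{\ve_l}\nabla_{\ve_k}\theta=(\nabla^2\theta)(\ve_l,\ve_k,\cdot,\cdot)+\nabla_{\nabla_{\ve_l}\ve_k}\theta$.
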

\begin{proof} In the metric of the form \eqref{e-AH-metric}, then $|d-\rho|$ is bounded. Hence a function which is $O(e^{-\tau d})$ if and only if it is $O(e^{-\tau \rho})$. However, $d$ is more intrinsic. Define
$ \{ \Gamma_{ij}^k \} $   by:
\be \label{e-def-gamma-t-gamma}
\nabla_{\ve_i}\ve_j=\Gamma_{ij}^k \ve_k.
\ee
Direct computations give:
\be \label{e-gamma-tgamma}
\begin{split}
 \Gamma_{ij}^k
=&\ \frac12     \lf[- \mathfrak{b}([\ve_i,\ve_k ], \ve_j)-\mathfrak{b}([\ve_j,\ve_k ],\ve_i)+ \mathfrak{b}([\ve_i,\ve_j],\ve_k)\ri] ,
\end{split}
\ee
and
\be \label{e-Lie}
[\ve_0,\ve_\a]=-\frac{\cosh \rho}{\sinh \rho}\ve_\a, \  \ [\ve_\a,\ve_\b]=\frac1{ (\sinh \rho) }\sum_{\gamma=1}^{ n-1} \lambda_{\a\b}^\gamma\ve_\gamma ,
\ee
 where $ \{ \lambda_{\a\b}^\gamma \} $ are smooth functions in the geodesic balls of radius $2$ in $(\mathbb{S}^{n-1}, \sigma)$ so that geodesic balls of radius 1 will cover $\mathbb{S}^{n-1}$. Moreover
 \be\label{e-Gamma-1}
 \left\{
   \begin{array}{ll}
     \Gamma_{00}^k=\Gamma_{0\a}^0=\Gamma_{\a 0}^0=\Gamma_{0\a}^\b=0;
\Gamma_{\a 0}^\b=\coth\rho \, \delta_{\a\b};\\
\Gamma_{\a\b}^0=-\coth\rho\, \delta_{\a\b};\\
\Gamma_{\a\b}^\gamma=\frac1{2\sinh \rho}\lf(-\lambda_{\a\gamma}^\b-\lambda_{\b\gamma}^\a+\lambda_{\a\b}^\gamma\ri).
   \end{array}
 \right.
 \ee
It is easy to see that:
\be\label{e-Gamma-2}
\ve_l(\Gamma_{ij}^k)=O(e^{-\rho}).
\ee
  Suppose $g$ is   metric defined on $\H^n$ such that $|\theta|_{\mathfrak{b}}=|g-\mathfrak{b}|_{\mathfrak{b}}=O(e^{-\tau d})$, then one can see that
$$
|g_{ij}-\delta_{ij}|=O(e^{-\tau \rho}).
$$
because $|d-\rho|$ is bounded. In addition, suppose $|\nabla \theta |_{\mathfrak{b}}=O(e^{-\tau d})=O(e^{-\tau \rho})$.  Then
\bee
\theta_{ij;k}=\ve_k(\theta_{ij})-\Gamma_{ki}^l\theta_{lj}-\Gamma_{kj}^l\theta_{il}
\eee
Since $\Gamma_{ij}^k$ are uniformly bounded, and $|\theta_{ij}|, |\theta_{ij;k}|=O(e^{-\tau \rho})$, we conclude that $\ve_k(g_{ij})=\ve_k(\theta_{ij})=O(e^{-\tau \rho})$.
Suppose we also have $|\nabla^2\theta|_{\mathfrak{b}}=O(e^{-\tau d})=O(e^{-\tau \rho})$, we prove $\ve_l(\ve_k(g_{ij}))=O(e^{-\tau \rho})$. By
\bee
\theta_{ij;k}=\ve_k(\theta_{ij})-\Gamma_{ki}^l\theta_{lj}-\Gamma_{kj}^l\theta_{il},
\eee
then
\bee
\theta_{ij;kl}= \ve_l(\theta_{ij;k})-\Gamma_{li}^m\theta_{mj;k}-\Gamma_{lj}^m\theta_{im;k}-\Gamma_{lk}^{m}\theta_{ij;m}.
\eee
 Again, as $\Gamma_{ij}^k$ are uniformly bounded and $|\nabla \theta|_{\mathfrak{b}}=O(e^{-\tau \rho})$, the left hand side and the last three terms are of order $e^{-\tau \rho}$. On the other hand,
\bee
\begin{split}
\ve_l(\theta_{ij;k})=&\ve_l\lf(\ve_k(\theta_{ij})-\Gamma_{ki}^m\theta_{mj}
-\Gamma_{kj}^m\theta_{im}\ri) .
\end{split}
\eee
Since $\ve_l(\ve_k(\theta_{ij}))=\ve_l(\ve_k(g_{ij}))$ and
\bee
\ve_l(\Gamma_{ki}^m\theta_{mj})=\ve_l(\Gamma_{ki}^m)\theta_{mj}+\Gamma_{ki}^m\ve_l(\theta_{mj})
=O(e^{-\tau \rho})
\eee
etc., by \eqref{e-Gamma-2}, and the fact that $|\theta|_{\mathfrak{b}}, |\nabla\theta|_{\mathfrak{b}}$ are of order $e^{-\tau\rho}$, we have $\ve_l(\ve_k(g_{ij}))=O(e^{-\tau\rho})$. This completes the proof of the lemma.

\end{proof}

We are ready to prove Theorem \ref{t-AH-CMC}.
\begin{proof}[Proof of Theorem \ref{t-AH-CMC}] We may assume that $P|_{s=0}=\mathbf{a}\cdot \mathbf{x}$, where $\mathbf{x}$ is the position vector of a point on $\mS^2$. Consider the corresponding CMC surface $\Sigma^0$ in the Minkowski spacetime given by the graph of
$$
u^0=\lf(1+|\mathbf{x}+\mathbf{a}|^2\ri)^\frac12.
$$
Let $P^0=r-u^0$. Then the boundary value of $P^0$ is the same as that of $P$. In fact, in the $\mathbf{y}, s$ coordinates, $P-P^0=O(s^4)$ by Corollary \ref{c-smooth-null}.

Then $\Sigma_0$ can be considered to be parametrized as: $(\mathbf{y}, s)\to   (\mathbf{y}, s, -P^0(\mathbf{y},s))$. It is similar for $\Sigma$ defined in the same domain. The metrics for $\Sigma, \Sigma_0$ are given by

\bee
\begin{split}
g
=&s^{-2}\lf(\sigma_{AB}dy^Ady^B-2P_s ds^2-2P_A dy^Ads\ri)\\
&-(1-2ms)\lf(P_AP_B dy^A  dy^B+P_s^2 ds^2+2P_AP_s dy^Ady^s\ri)\\
=:&s^{-2}\ol g.
\end{split}
\eee
and
\bee
\begin{split}
\mathfrak{b}=&s^{-2} \lf(\sigma_{AB}dy^Ady^B-2P_s^0 ds^2-2P_A^0 dy^Ads\ri)\\&-\lf(P_A^0P_B^0 dy^A  dy^B+(P_s^0)^2 ds^2+2P_A^0P_s^0 dy^Ady^s\ri)\\
=:&s^{-2}\ol {\mathfrak{b}}.
\end{split}
\eee
Note that $(\Sigma_0,\mathfrak{b})$ is isometric to $\H^3$.
Now $P-P^0=O(s^4), P_A-P_A^0=O(s^4)$. Hence $\ol g$ and $\ol {\mathfrak{b}}$ are uniformly equivalent. So $g$ and $\mathfrak{b} $ are uniformly equivalent. Consider the coordinate frame $e_1=\p_{y^1}, e_2=\p_{y^2}, e_3=\p_s$. Then $(\ol{\mathfrak{b}}(e_i,e_j))$ and its inverse are uniformly bounded near $s=0$. Let $\ol \theta:=\ol g-\ol{\mathfrak{b}}$ and $\theta:=  g- \mathfrak{b} =s^{-2}\ol \theta.$ Observe that if $T$  is a $(0,k)$ tensor, then
$$
|T|_{\mathfrak{b}}=s^{-k}|T|_{\ol{\mathfrak{b}}}.
$$
By the estimates on $P-P^0$, we have
\be\label{e-theta-1}
|\theta|_{\mathfrak{b}}=|\ol\theta|_{\ol{\mathfrak{b}}}=O(s^3).
\ee
Since $P^0$ is smooth up to $s=0$,  one can also check that
$$
|\ol\nabla\, \ol\theta|_{\ol{\mathfrak{b}}}=O(s^2), \ \ |\ol\nabla^2\,\ol\theta|_{\ol{\mathfrak{b}}}=O(s).
$$
where $\ol \nabla$ is the covariant derivative with respect to $\ol{\mathfrak{b}}.$ Hence we have
$$
|\ol\nabla\,  \theta|_{\ol{\mathfrak{b}}}=O(1), \ \ |\ol\nabla^2  \theta|_{\ol{\mathfrak{b}}}=O(s^{-1}).
$$
From this and the fact that $\Gamma-\ol \Gamma=O(s^{3})$, where $\Gamma, \ol \Gamma$ are the Christoffel symbols with respect to $\mathfrak{b}, \ol{\mathfrak{b}}$ respectively in the coordinates $y^1, y^2, s$, one can check that
$$
|\nabla \theta|_{\mathfrak{b}}=O(s^3);\ \ |\nabla^2 \theta|_{\mathfrak{b}}=O(s^3 ).
$$
where $\nabla$ is the covariant derivative with respect to $ \mathfrak{b}$.

On the other hand, by Theorem \ref{t-mathringA} and Lemma \ref{l-scalar}, the scalar curvature $\mathcal{S}$ of $\Sigma$ satisfies $|\mathcal{S}+6|=O(s^6)$.

We claim that $C_{1}e^{-d}\le s\le C_{2}e^{- d}$ for some constants $C_{1}, C_{2} >0$, provided $s$ is small enough, where $d$ is the distance from a fixed point on $\Sigma_0$ with respect to $\mathfrak{b}$. If the claim is true, then one can see that the theorem is true. To prove the claim, let $\mathbf{z}=\mathbf{x}+\mathbf{a}$ be the translation of $\mathbf{x}$ in the Minkowski spacetime. Then $\Sigma_0$ is the graph of $(1+|\mathbf{z}|^2)^\frac12$. It is easy to see that the distance from a suitable fixed point $d$ satisfies
\bee
\begin{split}
\sinh d=&|\mathbf{z}|\\
=&|\mathbf{x}+\mathbf{a}|\\
=&|\mathbf{x}||1+\frac{\mathbf{a}}{|\mathbf{x}|}|\\
=&s^{-1}|1+s\mathbf{a}|.
\end{split}
\eee
 From this, one can see that the claim is true and so the proof of the theorem is completed.
\end{proof}

\section{Behaviors near $T, X$=0}\label{s-blackhole}
Consider the spacelike SSCMC surface given as a graph of $f(r)$ in the region I constructed by Lee-Lee \cite{LL2}, we know that only in the case that the surface passes through $T=X=0$, $f(r)$ will have finite limit as $r\to 2m$. Motivated by this fact,
 in this section, we will study the behavior of spacelike CMC surfaces near the black hole region. We only study  spacelike CMC surfaces near $T=X=0$ in the Kruskal extension and derive some restrictions on the surfaces.

 \subsection{Behaviors of SSCMC surfaces near $T, X$=0}\label{ss-blackhole-sscmc}
 We first recall the behavior of a spacelike SSCMC hypersurface near $T=X=0$ studied  in \cite{LL} and \cite{LL2}.
 Let $t=f(r)$ be the function of  such a SSCMC hypersurface $\Sigma$ in Schwarzschild spacetime with mass $m>0$ and $c_{1}=-8m^{3}H$ under the usual coordinates $(t, r, \by)$. By Proposition 4(b) in \cite{LL2}, we know that $\lim\limits _{r\rightarrow 2m}f(r)$ is finite, and $\frac{d}{dr}f(r)=O(r-2m)^{-\frac12}$, as $r\rightarrow 2m $. Indeed, by Theorem 5 in \cite{LL2}, we know that the $T$-coordinate function $T$ of this $\Sigma$ is $C^\infty$-smooth with respect to $X$ near $T=X=0$. By Proposition \ref{smoothnessinnerbdry} below, we know that its corresponding $t$-coordinate function $f$ is also $C^\infty$-smooth with respect to $\eta=(1-\frac{2m}r)^\frac12$ near $\eta=0$ which is equivalent to $r=2m$.

From the construction in \cite{LL2}, one can see that for this kind of spacelike SSCMC surfaces $f$ is   determined by $f(2m)$. Hence  {\sl    one cannot prescribe the values of the function   $f(2m)$ and $P(0)$ which is the value of $P$ at null-infinity $s=0$ simultaneously.} In fact, by Proposition 2.1 in \cite{LL} ( see  also Proposition 2 in \cite{LL2} ) with $c_1=-8m^3H$ and let
$$
l_1(r)=h^{-\frac12}(r)(Hr-\frac{8m^3H}{r^2}),
$$
here $h=1-\frac{2m}{r}$, and then we have

$$
f(r)=\int^r_{2m} h^{-1}(x)(1+l^2_1(x))^{-\frac12}l_1(x)dx +\bar c_1
$$
therefore, $\bar c_1=f(2m)$ which implies that we cannot prescribe the value of $P$ at $s=0$ for those functions arising from SSCMC hypersurfaces. On the other hand, for a general spacelike CMC hypersurface under coordinates $(t, r, \by)$, $r\geq 2m$, let $u$ be its $t$-coordinate function, if $u(2m)$ and its corresponding $P(0)$ are both constants, note that Schwarzschild spacetime is spherical symmetric, then by maximum principle, we know that $u$ depends only on $r$, thus, we see that one cannot prescribe the values of the functions $u(2m)$ and $P(0)$  simultaneously even for a general spacelike CMC hypersurface.

 \subsection{Behaviors of general CMC surfaces near $T, X$=0 (I)}\label{ss-blackhole-cmc-1}
 We want to discuss the behaviors of a spacelike CMC graph in Region I as the graph approaches to $T=X=0$ in the Kruskal extension under some conditions on the smoothness.
 Consider  a spacelike surface $\Sigma$ in Region I, which is a graph of a function $u$: $t=u(r,\mathbf{y})$ for $ r_1< r< r_2 $ with $2m\leq r_1 <r_2 \leq \infty$ and $\mathbf{y}\in  \mathbb{S}^2$.  In the Kruskal extension, the coordinates are $(T, X,\mathbf{y})$.
 Hence a point $(u(r,\mathbf{y}), r, \mathbf{y})$ on the surface will be of the form
 $(T(r,\mathbf{y}), X(r,\mathbf{y}), \mathbf{y})$ in the Kruskal extension.   Consider the map: $\Phi:(r_1,r_2)\times \mathbb{S}^2\to \R^+\times \mathbb{S}^2$ with $\Phi(r,\by)=(X(r,\by),\by)$. We have the following:

\begin{lma}\label{l-graph} In the above setting, let $\Omega$ be the image of $\Phi$. Then $\Phi$ is a diffeomorphism onto its image $\Omega$. Hence, $\Sigma$ is given by the graph of $T=T(X,\by)$ over $\Omega$.
\end{lma}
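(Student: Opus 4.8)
The plan is to exploit the fact that $\Phi$ leaves the spherical factor untouched, so that the whole question reduces to the behavior of the single function $r\mapsto X(r,\mathbf{y})$ for each fixed $\mathbf{y}$. First I would write $X$ and $T$ along $\Sigma$ explicitly from \eqref{e-I}: since $t=u(r,\mathbf{y})$ on the surface,
\[
X(r,\mathbf{y})=\exp\Big(\tfrac{r_*}{4m}\Big)\cosh\Big(\tfrac{u}{4m}\Big),\qquad
T(r,\mathbf{y})=\exp\Big(\tfrac{r_*}{4m}\Big)\sinh\Big(\tfrac{u}{4m}\Big).
\]
Because $\Phi(r,\mathbf{y})=(X(r,\mathbf{y}),\mathbf{y})$ fixes $\mathbf{y}$, its Jacobian in any local product chart $(r,y^1,y^2)$ is triangular with determinant exactly $\partial_r X$; hence $\Phi$ is a local diffeomorphism precisely where $\partial_r X\neq 0$.

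The key computation is therefore the sign of $\partial_r X$. Using $r_*'=h^{-1}$, which follows at once from the definition of $r_*$, I obtain
\[
\partial_r X=\frac{1}{4m}\exp\Big(\tfrac{r_*}{4m}\Big)\Big(h^{-1}\cosh\big(\tfrac{u}{4m}\big)+u_r\sinh\big(\tfrac{u}{4m}\big)\Big).
\]
To show the bracket is positive I would invoke the spacelike hypothesis. Since $\Sigma$ is spacelike, the induced metric is positive definite; in the coordinates $(r,\mathbf{y})$ the $rr$-component of the induced Schwarzschild metric equals $h^{-1}-h\,u_r^2$, and positivity of this diagonal entry forces $h|u_r|<1$ (recall $h>0$ since $r>2m$ in Region I). Combining with the elementary identity $\cosh x-|\sinh x|=e^{-|x|}>0$ gives
\[
h^{-1}\cosh\big(\tfrac{u}{4m}\big)+u_r\sinh\big(\tfrac{u}{4m}\big)\ge h^{-1}\big(\cosh(\tfrac{u}{4m})-|\sinh(\tfrac{u}{4m})|\big)=h^{-1}e^{-|u|/4m}>0,
\]
so that $\partial_r X>0$ everywhere on $\Sigma$.

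With $\partial_r X>0$ in hand the remaining steps are routine. For each fixed $\mathbf{y}$ the map $r\mapsto X(r,\mathbf{y})$ is strictly increasing, hence injective, and since $\Phi$ preserves $\mathbf{y}$ this makes $\Phi$ globally injective; together with the nonvanishing Jacobian, $\Phi$ is an injective local diffeomorphism, hence a diffeomorphism onto its open image $\Omega$. Writing $\Phi^{-1}(X,\mathbf{y})=(r(X,\mathbf{y}),\mathbf{y})$ with $r$ smooth, the surface is recovered as the graph $T=T(r(X,\mathbf{y}),\mathbf{y})=:T(X,\mathbf{y})$ over $\Omega$, which is the assertion. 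I expect the only genuine obstacle to be the positivity of $\partial_r X$: once the spacelike condition is correctly translated into $h|u_r|<1$, the sign follows from the $\cosh$--$\sinh$ inequality, and everything else is standard manifold bookkeeping.
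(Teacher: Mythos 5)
Your proof is correct and takes essentially the same route as the paper: compute $\partial_r X$ from \eqref{e-I}, translate the spacelike condition into $|u_r|<h^{-1}=\frac{r}{r-2m}$ via positivity of the $rr$-component of the induced metric, conclude $\partial_r X>0$, and then combine the local diffeomorphism property with global injectivity from strict monotonicity of $r\mapsto X(r,\mathbf{y})$ at fixed $\mathbf{y}$. The only difference is presentational: you make explicit the triangular Jacobian and the inequality $\cosh x-|\sinh x|=e^{-|x|}>0$, which the paper leaves implicit.
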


\begin{proof}

By \eqref{e-I}, we have
$$
\frac{\partial X}{\partial r}=\exp (\frac{r_*}{4m})\frac{1}{4m}\lf((\frac{r}{r-2m})\cosh \frac{  u}{4m}+ \frac{\partial u}{\partial r}\sinh \frac{  u}{4m}\ri),
$$

Note that $\Sigma$ is spacelike, we have

$$
|\frac{\partial  u}{\partial r}|< \frac{r}{r-2m},
$$
hence  for any $r>2m$ and $\by \in {\mathbb{S}^2}$, we have

$$
\frac{\partial X}{\partial r}>0.
$$
From this, it is easy to see that $\Phi$ is  a local diffeomorphism. On the other hand, if $\Phi(r,\mathbf{y})=\Phi(r',\mathbf{y}')$, then $\by =\by'$ and $X(r,\by)=X(r',\by')$. By the above inequality, we conclude that $r=r'$. This completes the proof of the lemma.

\end{proof}

In the sequel, we make the following

\begin{ass}\label{assumption1}
Let $\Sigma$ be a smooth  and spacelike graph of a bounded function $u$ in the Region I and can be extended to be a smooth spacelike surface near $T=X=0$ so that $T$ is a smooth function of $X, \by$.
\end{ass}
Since $u$ is bounded, $X(r,\by)\to 0$ if and only if $r\to 2m$. In this case, we also have $T(X,\by)\to 0$.

\begin{prop}\label{smoothnessinnerbdry}
Let  $\Sigma$ be a spacelike  graph of a function $u=u(r,\by)$ inside Region I satisfying 	Assumption \ref{assumption1}. Then $u$ is smooth at $\eta=0$ as a function of $\eta, \by$, where $\eta=h^\frac12=\lf(1-\frac{2m}r\ri)^\frac12$.
\end{prop}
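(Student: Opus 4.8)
The plan is to make the change of description between the two coordinate systems explicit and then reduce the smoothness of $u$ in $\eta$ to a single application of the implicit function theorem. First I would rewrite the defining relations \eqref{e-I} in terms of $\eta$. Since $r=2m(1-\eta^2)^{-1}$ and $\frac{r}{2m}-1=\frac{\eta^2}{1-\eta^2}$, a direct computation gives $\exp\!\left(\frac{r_*}{4m}\right)=\eta\,\phi(\eta)$ for $\eta>0$, where
\[
\phi(\eta)=(1-\eta^2)^{-\frac12}\exp\!\left(\frac{1}{2(1-\eta^2)}\right)
\]
is smooth and positive near $\eta=0$ with $\phi(0)=e^{1/2}$. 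Hence along $\Sigma$,
\[
T=\eta\,\phi(\eta)\sinh\frac{u}{4m},\qquad X=\eta\,\phi(\eta)\cosh\frac{u}{4m}.
\]

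By Assumption \ref{assumption1}, $\Sigma$ is the graph $T=F(X,\mathbf{y})$ with $F$ smooth and $F(0,\mathbf{y})=0$. Using Hadamard's lemma I would write $F(X,\mathbf{y})=X\,F_1(X,\mathbf{y})$ with $F_1$ smooth near $X=0$ and $F_1(0,\mathbf{y})=\partial_XF(0,\mathbf{y})$. Substituting the two expressions above into $T=F(X,\mathbf{y})$, cancelling the common factor $\eta\,\phi(\eta)$ (legitimate for $\eta>0$ and extending continuously to $\eta=0$), and setting $\beta=\tanh\frac{u}{4m}$ so that $\cosh\frac{u}{4m}=(1-\beta^2)^{-1/2}$, I obtain the single scalar equation
\[
\beta=F_1\!\left(\frac{\eta\,\phi(\eta)}{\sqrt{1-\beta^2}},\,\mathbf{y}\right).
\]

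The final step is the implicit function theorem applied to $\Psi(\beta,\eta,\mathbf{y}):=\beta-F_1\!\left(\frac{\eta\,\phi(\eta)}{\sqrt{1-\beta^2}},\,\mathbf{y}\right)$, which is smooth in $(\beta,\eta,\mathbf{y})$ on $\{|\beta|<1\}$. At $\eta=0$ one has $\Psi(\beta,0,\mathbf{y})=\beta-F_1(0,\mathbf{y})$ and $\partial_\beta\Psi|_{\eta=0}=1\neq0$, while the boundary value $\beta_0(\mathbf{y}):=F_1(0,\mathbf{y})=\partial_XF(0,\mathbf{y})$ solves $\Psi(\beta_0,0,\mathbf{y})=0$. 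Since $u$ is bounded, $\beta$ takes values in a compact subset of $(-1,1)$, so the implicit function theorem produces a unique smooth solution $\beta=\beta(\eta,\mathbf{y})$ near $\eta=0$; then $u=4m\,\mathrm{arctanh}\,\beta(\eta,\mathbf{y})$ is smooth in $(\eta,\mathbf{y})$ near $\eta=0$ because $\mathrm{arctanh}$ is smooth on $(-1,1)$.

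I expect the main obstacle to be bookkeeping rather than conceptual: one must check that the function $\beta$ coming from the actual surface (a priori defined only for $\eta>0$) is continuous up to $\eta=0$ with value $\beta_0(\mathbf{y})$, so that it agrees with the branch produced by the implicit function theorem. This follows because $T/X=\tanh\frac{u}{4m}$ and $T/X\to\partial_XF(0,\mathbf{y})$ as $X\to0$ (using $F(0,\mathbf{y})=0$ and the smoothness of $F$), together with the uniform bound $|\beta|\le\tanh\frac{\sup_\Sigma|u|}{4m}<1$ coming from the boundedness of $u$.
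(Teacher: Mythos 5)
Your proposal is correct, but it takes a genuinely different route from the paper's own proof. The paper proves a general transfer principle: it writes $\phi_X=G\,\partial_\eta\phi$ and $\phi_A=\partial_A\phi-KT_A\partial_\eta\phi$ for the transition between $(X,\mathbf{y})$- and $(\eta,\mathbf{y})$-derivatives, shows that the coefficients $G$, $K$ are smooth with $G$ bounded away from zero (this uses the spacelikeness bound $|T_X|<1$ at $X=0$ and an implicit-function-theorem inversion of $\psi(\eta)=\eta(1-\eta^2)^{1/2}\exp\bigl(-\tfrac12(1-\eta^2)^{-1}\bigr)$ to see that $\eta$ is a smooth function of $(X,\mathbf{y})$), and then argues by induction on the order of derivatives that \emph{every} smooth function of $(X,\mathbf{y})$ is smooth in $(\eta,\mathbf{y})$ up to $\eta=0$; this is finally applied to $e^{u/2m}=(X+T)/(X-T)$, whose smoothness in $(X,\mathbf{y})$ requires the Taylor expansion of $T$ in $X$ with $|T_1|<1$. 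You instead collapse everything into one scalar implicit equation for $\beta=\tanh(u/4m)$: your identity $\exp(r_*/4m)=\eta\phi(\eta)$ is correct (indeed $\phi=\eta/\psi$ in the paper's notation), and after Hadamard's lemma $F=XF_1$ a single application of the implicit function theorem to $\Psi(\beta,\eta,\mathbf{y})=\beta-F_1\bigl(\eta\phi(\eta)(1-\beta^2)^{-1/2},\mathbf{y}\bigr)$, with $\partial_\beta\Psi=1$ at $\eta=0$, does the job. You also correctly identified and closed the one delicate point: matching the surface's $\beta$, a priori defined only for $\eta>0$, with the IFT branch, via continuity of $T/X=F_1(X,\mathbf{y})$ up to $X=0$ and the uniform bound $|\beta|\le\tanh\bigl(\sup_\Sigma|u|/4m\bigr)<1$ from the boundedness of $u$ — this last bound is exactly what guarantees $|F_1(0,\mathbf{y})|<1$, so that $\mathrm{arctanh}$ stays smooth, playing the role of the paper's hypothesis $|T_1|<1$. (Both arguments use $F(0,\mathbf{y})=0$ and the fact that $F$ is defined for small $X$ of both signs, which the extension \emph{near} $T=X=0$ in Assumption \ref{assumption1} provides.) As for what each approach buys: the paper's route yields the stronger, reusable statement that the entire coordinate change $(X,\mathbf{y})\mapsto(\eta,\mathbf{y})$ is smooth up to the corner, which is the form in which it feeds into the later analysis around Theorem \ref{t-blackhole}, at the cost of an inductive bookkeeping of derivatives; your route is shorter and self-contained for this proposition, and yields as a bonus the explicit boundary value $u(2m,\mathbf{y})=4m\,\mathrm{arctanh}\bigl(\partial_XF(0,\mathbf{y})\bigr)$.
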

\begin{proof} Let $\Phi$ be the diffeomorphism as above. We may assume that $\Phi$ will map $(0, \eta_0)\times \mS^2$ onto $\Omega$ which contains $(0,a)\times \mS^2$ for some $a>0$. Here $\eta=h^\frac12$ as before. Then any function in $(\eta, \by)$ is also a function in $X, \by$ near $\eta=0$.

Let $\phi$ be a smooth function of $X, \by$ near $X=0$, then it is also a smooth function of $\eta, \by$ near $ 0  $ and vice versa. In the following, as a function of $X, \by$, the partial derivative of $\phi$ with respect to $X, \by$ will be denoted by $\phi_X, \phi_A:=\p_{y^A}$ where $y^A$ are local coordinates of $\mS^2$, $A=1, 2$. As a function of $\eta, \by$, the partial derivatives will be denoted by $\p_{\eta}, \p_A$.

Suppose   $\phi$ is a smooth function of $X, \by$ near $X=0$, then
\be\label{e-PhiX}
\begin{split}
 \phi_{X}=&\p_r\,\phi (r_X+r_TT_X)\\
=&\p_r\phi \cdot \lf(\frac{4m\eta^2 X}{X^2-T^2}-\frac{4m\eta^2 T}{X^2-T^2}T_X\ri)\\
=&4m^2r^{-2} (X^{-1}\eta) \frac{1-T_X\frac TX}{1-(\frac TX)^2}\cdot \p_\eta \phi\\
=&G\cdot \p_{\eta}\phi\\
\end{split}
\ee
where
$$
G= 4m^2r^{-2} (X^{-1}\eta) \frac{1-T_X\frac TX}{1-(\frac TX)^2}.
$$
And
\be\label{e-Phi-y}
\begin{split}
 \phi_{A} =&\p_A\phi +\p_r\phi r_TT_A \\
 =&\p_A\phi  -\p_r\phi   \frac{4m\eta^2T}{X^2-T^2} T_A\\
 =&\p_A\phi  -4m^2r^{-2}\p_\eta\phi  ( X^{-1}\eta) \frac{\frac TX}{1-(\frac TX)^2}\cdot T_A\\
 =& \p_A\phi  -KT_A\p_\eta\phi  \\
\end{split}
\ee
$A=1,2$.
where
$$
K=4m^2r^{-2}  ( X^{-1}\eta) \frac{\frac TX}{1-(\frac TX)^2}.
$$

We claim that $G$, $K$ are smooth functions in $X,\by$  near $X=0$ and $G(0,\by)\ge c>0$.
Suppose the claim is true. Let $\phi$ be a smooth function in $X,\by$. We can conclude that $\p_\eta\phi$ is continuous up to $\eta=0$ by \eqref{e-PhiX},  because $X\to 0$ as $\eta\to0$. Moreover, $\p_\eta\phi$ is a smooth function of $  \eta , \by$ near $ \eta =0$.  Using \eqref{e-Phi-y}, we can also conclude that $\p_A\phi$ is continuous up to $\eta=0$, for $A=1, 2$ and $\p_A\phi$ is a smooth function in $  \eta ,\by$ near $ \eta =0$. Inductively, we can conclude that $\phi$ is a smooth function in $\eta,\by$ up to $\eta=0$.

Apply the argument to the function:
$$
e^{\frac{u}{2m}}=\frac{X+T}{X-T},
$$
we can conclude that $u$ is smooth as a function in $\eta,\by$ at $\eta=0$, provided it is a smooth function in $X, \by$. This is indeed the case, because
\be\label{e-expansion}
\begin{split}
\frac{X+T}{X-T}=&\frac{X+\sum_{k=1}^l\frac{X^k}{k!}T_k+R_{l}} {X-\sum_{k=1}^l\frac{X^k}{k!}T_k+R_l}\\
=&\frac{1+T_1+\sum_{k=2}^l\frac{X^{k-1}}{k!}T_k+
R_{l-1}} {1-T_1 -  \sum_{  k =2}^{l}\frac{X^{k-1}}{k!}T_k+ R_{l-1}}
\end{split}
\ee
where $R_l=O(X^{l+1})$ so that $(R_l)_X=O(X^k), (R_l)_{XX}=O(X^{k-1})$ etc., and $(R_l)_A=O(X^{ l +1})$ etc. Here $T_1=T_X(0,\by)$, $T_2=T_{XX}(0,\by)$ etc., which are smooth functions on $\mS^2$. By
Assumption \ref{assumption1}, near $X=0$, we have $|T_1|<1$.

It remains to prove the claim.  We first consider $X^{-1}\eta$. We have
\bee
\sqrt{\frac{X+T}{X-T}} +\sqrt{\frac{X-T}{X+T}}=\frac{2X}{\sqrt{X^2- T ^2}}=2X\lf((\frac r{2m}-1)\exp(\frac r{2m})\ri)^{-\frac12}.
\eee
So

\bee
X^{-1}\eta=2(\frac{  2 m }{r})^\frac12 \exp(-\frac r{4m}) \left(\sqrt{\frac{X+T}{X-T}} +\sqrt{\frac{X-T}{X+T}}  \right)^{-1}
\eee
By \eqref{e-expansion} and the fact that $|T_1|<1$, we conclude that
$$
X^{-1}\eta(\frac{  2 m }{r})^\frac12 \exp(-\frac r{4m})
$$
is a smooth function in $X,\by$. Using the fact that $ \frac{  2m }{r} =1-\eta^2$,  let
$$
 \psi(\eta):=\eta(\frac{ 2 m }{r})^\frac12 \exp(-\frac r{4m})=\eta\lf(1-\eta^2\ri)^\frac12 \exp\lf(-\frac12 (1-\eta^2)  ^{-1} \ri).
 $$
 Then $\psi(\eta)$ is  a smooth function of $X,\by$. Since $   \psi '(0)=1$, we conclude that by implicit function theorem, $\eta$ is a smooth function in $X, \by$. From this it is easy to see the claim is true.
\end{proof}

Consider a spacelike CMC surface with constant mean curvature $H$ with respect to the future directed unit normal in Region I, which is the graph of a function  $u=u(\eta,\mathbf{y})$. We also assume that Assumption \ref{assumption1} is true. By the previous lemma, $u$ is smooth up to $\eta$. We want to consider the behaviors of $u$ near $\eta=0$. We have the following:

\begin{thm}\label{t-blackhole} All derivatives of
$u$ at $\eta=0$ can be    expressed in terms of the derivatives of $u$ with respect to $\mathbb{S}^2$. Moreover, at $\eta=0$,
$$
\p_\eta u=24m^2H.
$$

\end{thm}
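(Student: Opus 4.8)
The plan is to analyze the mean curvature equation of Lemma \ref{l-equation-eta} as $\eta\to 0$, after isolating its singular part. By Proposition \ref{smoothnessinnerbdry} the function $u$ is smooth in $(\eta,\mathbf y)$ up to $\eta=0$, and $r=2m(1-\eta^2)^{-1}$ is smooth in $\eta$ near $\eta=0$ with $r\to 2m$. The only singularity in $L=\eta^{-2}-m^2r^{-4}u_\eta^2-r^{-2}|\wn u|^2$ comes from the $\eta^{-2}$ term, so I would write $L=\eta^{-2}W$ with
$$
W:=1-\eta^2\big(m^2r^{-4}u_\eta^2+r^{-2}|\wn u|^2\big),
$$
which is smooth and satisfies $W\to 1$ as $\eta\to 0$; in particular $W^{3/2}$ is smooth near $\eta=0$. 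Substituting $L=\eta^{-2}W$ and multiplying the equation of Lemma \ref{l-equation-eta} by $\eta^3$ converts it into a \emph{regular} equation on $\{\eta\ge 0\}$. Using $L\eta^{-1}-\tfrac12 L_\eta=2\eta^{-3}W-\tfrac12\eta^{-2}W_\eta$, this regularized equation reads
$$
3HW^{3/2}=m^2r^{-4}\Big(\eta W u_{\eta\eta}+(2W-\tfrac12\eta W_\eta)u_\eta\Big)+r^{-2}\eta\Big(W\wt\Delta u-\tfrac12\la\wn W,\wn u\ra\Big).
$$

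To get the boundary value of $\partial_\eta u$, I would simply evaluate this identity at $\eta=0$. Every term carrying an explicit factor of $\eta$ drops out, $W=1$ and $r=2m$, leaving
$$
3H=m^2(2m)^{-4}\cdot 2\,u_\eta(0)=\frac{u_\eta(0)}{8m^2},
$$
that is $\partial_\eta u=24m^2H$ at $\eta=0$, as claimed. Note this value is constant on $\mS^2$ since $H$ is constant.

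For the first assertion I would run an induction on the order of differentiation in $\eta$. Applying $\partial_\eta^k$ to the regularized equation and setting $\eta=0$, I expect the top-order unknown $\partial_\eta^{k+1}u(0)$ to appear only on the right, with a nonvanishing coefficient. Indeed the term $2m^2r^{-4}Wu_\eta$ contributes $\tfrac{1}{8m^2}\,\partial_\eta^{k+1}u(0)$, while the term $m^2r^{-4}\eta W u_{\eta\eta}$ contributes (through the factor $\eta$) $\tfrac{k}{16m^2}\,\partial_\eta^{k+1}u(0)$, for a total coefficient $\tfrac{k+2}{16m^2}\neq 0$; the remaining right-hand terms and the left-hand side $3HW^{3/2}$ involve only $\eta$-derivatives of $u$ of order $\le k$ together with $\mathbf y$-derivatives. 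Since the coefficient never vanishes, one solves for $\partial_\eta^{k+1}u(0)$ in terms of lower $\eta$-derivatives and their $\mS^2$-derivatives. Starting from $u|_{\eta=0}$ (a function on $\mS^2$) and $u_\eta(0)=24m^2H$, induction then expresses every $\partial_\eta^{k}u|_{\eta=0}$ as a differential expression in $u|_{\eta=0}$ and its tangential derivatives.

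The main obstacle is the bookkeeping needed to justify the coefficient count, namely that (i) after multiplication by $\eta^3$ the equation is genuinely smooth at $\eta=0$, and (ii) neither $3HW^{3/2}$ nor the $W_\eta$- and $\wn W$-terms can raise the $\eta$-order to $k+1$. Both hinge on the single structural fact that $W-1$ carries an explicit factor $\eta^2$ and that each summand of $W-1$ is a product of first-order quantities ($u_\eta$ or $\wn u$): this simultaneously regularizes the equation and demotes, by at least one, the $\eta$-order of the $u$-derivatives entering through $W$. Verifying that $\partial_\eta^k(\eta^2(\cdots))|_{\eta=0}$ leaves highest $\eta$-derivative $\partial_\eta^{k-1}u$ is the computation that must be done carefully; once it is in hand, the nonvanishing of $\tfrac{k+2}{16m^2}$ makes the recursion go through.
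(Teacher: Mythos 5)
Your proposal is correct and follows essentially the same route as the paper: both multiply the equation of Lemma \ref{l-equation-eta} by $\eta^3$ to regularize it (your $W=\eta^2 L$ is exactly the quantity the paper expands explicitly via $r^{-1}=\tfrac1{2m}(1-\eta^2)$), evaluate at $\eta=0$ to get $u_\eta(0)=24m^2H$, and run the same induction in which $\p_\eta^{k+1}u(0)$ appears with the nonvanishing coefficient $\tfrac1{8m^2}+\tfrac{k}{16m^2}=\tfrac{k+2}{16m^2}$ while all other terms involve $\eta$-derivatives of $u$ of order at most $k$.
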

\begin{proof} By Lemma \ref{l-equation-eta}, we have
\bee
\begin{split}
3 H L^\frac32=&  m^2r^{-4} \lf(L u_{\eta\eta}+(L\eta^{-1}-\frac12L_\eta)  u_\eta \ri)  +r^{-2} \lf(L\wt\Delta u-\frac12 \la\wn L,\wn u\ra  \ri) \\
=:&A+B
\end{split}
\eee
where
$$
L= \eta^{-2}-m^2r^{-4}u_\eta^2-r^{-2}|\wn u|^2,
$$
here $ \wn u  $ and $|\wn u|$ are taken with respect to the standard metric of $\mS^2$ etc.
We want to expand the terms as a power of $\eta$. By
\be\label{e-RHS-1}
\begin{split}
 \eta^3\times A
=&m^2r^{-4}\lf(\eta-\lf(m^2r^{-4}u_\eta^2  + r^{-2}|\wn u|^2\ri)\eta^3\ri)u_{\eta\eta} \\
&+m^2r^{-4}\lf(1-\lf(m^2r^{-4}u_\eta^2 + r^{-2}|\wn u|^2\ri)\eta^2\ri)u_\eta\\
&-\frac12m^2r^{-4}\lf( -2-\lf(2m^2r^{-4}u_\eta u_{\eta\eta}+r^{-2}(|\wn u|^2)_\eta\ri) \eta^3\ri)u_\eta\\
&-\frac12m^2r^{-4}
\lf(4m r^{-3}u_\eta^2 +2r^{-3}m^{-1}  r^2|\wn u|^2\ri)\eta^4 u_\eta.
\end{split}
\ee
\be\label{e-RHS-2}
\begin{split}
 \eta^3\times B
=&r^{-2}\lf(\eta-\lf(m^2r^{-4}u_\eta^2  + r^{-2}|\wn u|^2\ri)\eta^3\ri)\wt\Delta u
\\
&+\frac12 r^{-2}\eta^3\la    (m^2r^{-4}\wn u_\eta^2  + r^{-2}\wn (|\wn u|^2) ,\wn u\ra
\end{split}
\ee
then
\be\label{e-RHS-3}
\eta^3\times\mathbf{RHS}=\sum_{k=0}^4a_k\eta^k
\ee
where
\bee
\left\{
  \begin{array}{ll}
    a_0=2m^2r^{-4}u_\eta, \\
a_1=m^2r^{-4}u_{\eta\eta}+r^{-2}\wt \Delta u\\
a_2=-m^2r^{-4}\lf(m^2r^{-4}u_\eta^2 + r^{-2}|\wn u|^2\ri) u_\eta\\
a_3=-m^2r^{-4}\lf(m^2r^{-4}u_\eta^2  + r^{-2}|\wn u|^2\ri) u_{\eta\eta} - r^{-2}\wt\Delta u\lf(m^2r^{-4}u_\eta^2  + r^{-2}|\wn u|^2\ri)
\\
  + \frac12m^2r^{-4}\lf(2m^2r^{-4}u_\eta u_{\eta\eta}+r^{-2}(|\wn u|^2)\ri)u_\eta +\frac12 r^{-2} \la    (m^2r^{-4}\wn u_\eta^2  + r^{-2}\wn (|\wn u|^2) ,\wn u\ra\\
a_4=- m^2r^{-4}
\lf(2m r^{-3}u_\eta^2 + r^{-3}m^{-1}  r^2|\wn u|^2\ri)  u_\eta
  \end{array}
\right.
\eee

On the other hand,
\be\label{e-LHS}
\begin{split}
\eta^3\times \mathbf{LHS}
=&3 H \lf(1-\eta^2\lf(m^2r^{-4}u_\eta^2  + r^{-2}|\wn u|^2\ri)\ri)^\frac32\\
=&3  H \lf(1+\sum_{k=1}^\infty b_k\eta^{2k}\ri)
\end{split}
\ee
where
\bee
b_k=(-1)^k\left(
            \begin{array}{c}
              \frac32 \\
              k \\
            \end{array}
          \right)
\lf(m^2r^{-4}u_\eta^2+r^{-2}|\wt\nabla u|^2\ri)^k
\eee
and \bee
\left(
            \begin{array}{c}
              \frac32 \\
              k \\
            \end{array}
          \right)=\frac1{k!}\cdot\frac32(\frac32-1)\dots(\frac32-k+1).
       \eee
In general, note that $r^{-1}=\frac1{2m}(1-h)=\frac1{2m}(1-\eta^2)$. In the following, let us denote
$S_k$ to be a polynomial of derivatives w.r.t. $\mathbb{S}^2$ and $\eta$ of $\p^i_\eta u$ up to order $k$, it may vary from line to line. Then we have
\bee
\eta^3\times\mathbf{RHS}=\frac1{8m^2}u_\eta+\frac1{16m^2}u_{\eta\eta}\eta+S_1\eta+S_2\eta^2.
\eee
Hence at $\eta=0$, for $k\ge 1$,
\bee
\p^k_\eta\lf(\eta^3\times\mathbf{RHS}\ri)=\lf(\frac1{8m^2}+\frac k{16m^2}\ri)\p^{k+1}_\eta u +S_k
\eee
Also
\bee
\p^k_\eta\lf(\eta^3\times\mathbf{LHS}\ri)=S_k.
\eee
Hence if $\p^i_\eta u$ can be expressed in terms of the boundary data when $\eta=0$ for $1\le i\le k$, then this is also true for  $\p^{k+1}_\eta u$. Let us compute $\p_\eta u$. Let $\eta=0$ in  \eqref{e-LHS} and \eqref{e-RHS-3}, we have at $\eta=0$.
$$
u_\eta=24m^2  H .
$$
\end{proof}

\begin{rem}
 We remark that from the theorem, consider two spacelike CMC surfaces with constant mean curvature $H$ given as graphs of $u_1, u_2$ in $r>2m$. Suppose $u_1, u_2$  satisfy the conditions in the theorem. If $u_1(2m)=u_2(2m)$, then $u_1, u_2$ will agree up to infinite order at $\eta=0$ as functions of $\eta, \by$.
 \end{rem}

\subsection{Behaviors of general CMC surfaces near $T, X$=0 (II)}\label{ss-blackhole-cmc-2}
To investigate the totally geodesic of the inner boundary of a spacelike CMC hypersurface smooth near $T, X$=0, we adopt the original coordinates $(t,r, \theta,\phi)$ in Schwarzschild spacetime $\mathbb{X}^{3,1}$. Let $\Sigma$ be a spacelike  graph of function $t=u(r, \theta,\phi)$ in Region I, then the induced metric of $g_{Sch}$ on $\Sigma$ is
\begin{equation}\label{inducedmetric}
\begin{split}
 g|_\Sigma=&(h^{-1}-hu^2_r)dr^2+(r^2-hu^2_\theta)d\theta^2+(r^2\sin^2\theta-hu^2_\phi)d\phi^2\\
&-2hu_ru_\theta drd\theta -2hu_ru_\phi drd\phi-2h u_\theta u_\phi d\theta d\phi
\end{split}
\end{equation}

Denote $\p \Sigma:=\{(u(2m, \theta, \phi), 2m,\theta, \phi):(\theta, \phi)\in \mathbb{S}^2\}$, then $\p \Sigma$ is an inner boundary of $\Sigma$.
Let
\bee
e_1=h^\frac12(u_r \partial_{t} + \partial_r );
 e_2=u_\theta \partial_t + \partial_\theta ;
 e_3=u_\phi  \partial_t +  \partial_\phi,
\eee
then $\{e_i\}_{1\leq i\leq 3}$ are tangent vectors of $\Sigma$ and $\{e_2, e_3\}$ are tangent vectors of the inner boundary $\p \Sigma$.

\begin{thm}
Suppose a spacelike hypersurface $\Sigma$ satisfies Assumption \ref{assumption1},  and $u_\eta\neq 0$ at $r=2m$, then $\p \Sigma$ is totally geodesic in  $\Sigma$. In particularly, $\p \Sigma$ is totally geodesic for a spacelike CMC surface  satisfying Assumption \ref{assumption1}.
\end{thm}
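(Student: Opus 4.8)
The plan is to identify the inner boundary $\p\Sigma$ with the bifurcation sphere $\mathcal{B}=\{T=X=0\}$ of the Kruskal extension and then to use that $\mathcal{B}$ is the fixed--point set of an isometry. By Assumption \ref{assumption1} together with the observation (recorded just after that assumption) that $X\to 0$ and $T\to 0$ as $r\to 2m$, the surface $\Sigma$ extends smoothly across $r=2m$ and its trace on $\{r=2m\}$ is exactly $\{(0,0,\by):\by\in\mS^2\}=\mathcal{B}$. Since $\Sigma$ is spacelike we have $|T_X|<1$ along $\p\Sigma$, so $\Sigma$ meets the null cone $r=2m$ (that is, $X=\pm T$) only at $X=0$, i.e.\ transversally along $\mathcal{B}$. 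Hence $\p\Sigma=\mathcal{B}$ and $\mathcal{B}\subset\Sigma$.

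The key point is that $\iota:(T,X,\by)\mapsto(-T,-X,\by)$ is an isometry of the Kruskal metric \eqref{e-Kruskal-1}: its coefficients depend on $T,X$ only through $X^2-T^2$ and through $dT^2,dX^2$, all of which are preserved by $\iota$. The fixed--point set of $\iota$ is precisely $\mathcal{B}$, and the fixed--point set of an isometry is always totally geodesic (if $\gamma$ is a geodesic with $\gamma(0)\in\mathcal{B}$ and $\gamma'(0)\in T\mathcal{B}$, then $\iota\circ\gamma$ is a geodesic with the same initial data, so $\iota\circ\gamma=\gamma$ and $\gamma$ remains in $\mathcal{B}$). Thus $\mathcal{B}$ is totally geodesic in the spacetime. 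Now let $W_1,W_2$ be tangent to $\p\Sigma=\mathcal{B}$ and let $\nabla$ be the ambient Levi--Civita connection. Total geodesy of $\mathcal{B}$ gives $\nabla_{W_1}W_2\in T\mathcal{B}$; since $T\mathcal{B}=T\p\Sigma\subset T\Sigma$, the vector $\nabla_{W_1}W_2$ is already tangent to $\Sigma$, so it coincides with the intrinsic derivative $\nabla^{\Sigma}_{W_1}W_2$ and lies in $T\p\Sigma$. Therefore the second fundamental form of $\p\Sigma$ inside $\Sigma$ vanishes, which is the assertion.

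In line with the coordinate set--up preceding the statement, one can also verify this by a direct computation. The chart $(t,r,\by)$ degenerates at $r=2m$ (there $u_r=O((r-2m)^{-\frac12})$ and $g_{rr}=h^{-1}\to\infty$), so one first passes to $\eta=h^{\frac12}$, in which $u$ is smooth up to $\eta=0$ by Proposition \ref{smoothnessinnerbdry}. Using $dr=m^{-1}\eta r^2\,d\eta$, the induced metric \eqref{inducedmetric} becomes, in the chart $(\eta,\by)$,
\bee
\begin{split}
G=&(m^{-2}r^4-\eta^2u_\eta^2)\,d\eta^2+(r^2-\eta^2u_\theta^2)\,d\theta^2+(r^2\sin^2\theta-\eta^2u_\phi^2)\,d\phi^2\\
&-2\eta^2u_\eta u_\theta\,d\eta\,d\theta-2\eta^2 u_\eta u_\phi\,d\eta\,d\phi-2\eta^2 u_\theta u_\phi\,d\theta\,d\phi,
\end{split}
\eee
which is smooth and non--degenerate up to $\eta=0$, with $G|_{\eta=0}=16m^2\,d\eta^2+4m^2\sigma$ diagonal. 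Consequently the unit normal of $\p\Sigma=\{\eta=0\}$ in $\Sigma$ is $\nu=(G_{\eta\eta})^{-\frac12}\p_\eta$ at $\eta=0$, and for $A,B\in\{\theta,\phi\}$ the second fundamental form is $\mathrm{II}_{AB}=\la\nabla^{\Sigma}_{\p_A}\p_B,\nu\ra=(G_{\eta\eta})^{\frac12}\,\Gamma^\eta_{AB}$ at $\eta=0$, where $\Gamma^\eta_{AB}$ are the Christoffel symbols of $G$. Because $r=2m(1-\eta^2)^{-1}$ is even in $\eta$ and the off--diagonal block $G_{\eta A}=-\eta^2u_\eta u_A$ is $O(\eta^2)$, one gets $\p_\eta G_{AB}=0$ and $\p_A G_{\eta B}=0$ at $\eta=0$; since $G$ is diagonal there, $\Gamma^\eta_{AB}=\tfrac12 G^{\eta\eta}(\p_AG_{\eta B}+\p_BG_{\eta A}-\p_\eta G_{AB})=0$, so $\mathrm{II}_{AB}=0$.

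I expect the main obstacle to be organizational rather than analytic: one must justify that the boundary obtained in the limit $r\to 2m$ is genuinely the fixed--point set $\mathcal{B}$ and that the extended $\Sigma$ is a smooth spacelike hypersurface through $\mathcal{B}$. This is exactly where Assumption \ref{assumption1}, the transversality $|T_X|<1$, and the smoothness in $\eta$ from Proposition \ref{smoothnessinnerbdry} enter; in the direct computation the only real care is the degeneration of $(t,r,\by)$ at the horizon, resolved by the change of variable to $\eta$. The hypothesis $u_\eta\neq0$ at $r=2m$ guarantees that $\p\Sigma$ is a nondegenerate boundary sphere of the graph, and for a spacelike CMC surface it holds automatically, since $u_\eta=24m^2H\neq0$ at $\eta=0$ by Theorem \ref{t-blackhole}; this yields the final ``in particular'' statement.
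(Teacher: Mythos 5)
Your proposal is correct, but it takes a genuinely different route from the paper's. The paper argues extrinsically in the $(t,r,\by)$ chart: with $\p_\rho=h^{\frac12}\p_r$, it computes from the ambient Christoffel symbols that $\la\nabla_{e_i}e_j,\p_\rho\ra$ carries a factor $h^{\frac32}$ in every term and hence vanishes as $r\to 2m$ for $i,j\in\{2,3\}$; it combines this with the total geodesy of the $t$-level sets (which gives $\la\nabla_{e_i}e_j,\p_t\ra=0$) and pairs against the limit normal $v_1=A\p_t+\p_\rho$ of $\p\Sigma$ in $\Sigma$, where $A=\lim_{r\to 2m}h^{\frac12}u_r$ --- and this is exactly where the hypothesis $u_\eta\neq 0$ (i.e.\ $A\neq 0$) enters the paper's argument. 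Your first argument replaces all of this by the observation that $\p\Sigma$ is the bifurcation sphere $\{T=X=0\}$, the fixed-point set of the Kruskal isometry $(T,X,\by)\mapsto(-T,-X,\by)$, hence totally geodesic in the spacetime; since its tangent space lies in $T\Sigma$, ambient total geodesy immediately forces the second fundamental form of $\p\Sigma$ in $\Sigma$ to vanish. This is both cleaner and stronger: it shows the bifurcation sphere is totally geodesic in the full spacetime and therefore in \emph{every} smooth spacelike hypersurface through it, it makes no use of the CMC condition, and it does not use $u_\eta\neq 0$ at all, so that hypothesis is revealed as superfluous for the conclusion. Your second, intrinsic verification is also correct and closer in spirit (though not in detail) to the paper's framework: the metric you display follows from \eqref{inducedmetric} via $dr=m^{-1}\eta r^2\,d\eta$ and $u_r\,dr=u_\eta\,d\eta$, its boundary value $16m^2\,d\eta^2+4m^2\sigma$ is nondegenerate, and the evenness of $r$ in $\eta$ together with $G_{\eta A}=O(\eta^2)$ gives $\Gamma^\eta_{AB}=0$ at $\eta=0$; the smoothness input is precisely Proposition \ref{smoothnessinnerbdry} under Assumption \ref{assumption1}, the same as in the paper. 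The only point you share implicitly with the paper is in the ``in particular'' step, where $u_\eta=24m^2H\neq 0$ from Theorem \ref{t-blackhole} tacitly requires $H\neq 0$ --- but since neither of your two arguments actually needs $u_\eta\neq 0$, this is moot for your proof.
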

\begin{proof}
Let
\bee
\rho(r)=\int^r_{2m}h^{-\frac12}(\tau)d\tau,
\eee
then $\partial_\rho=h^\frac12  \partial_r$ and for any $r>2m$,

 $$h^\frac12  u_r=u_\eta \frac{m}{r^2}.$$

By the conditions,  we know that $h^{\frac12}u_r$ is smooth up to $r=2m$, and $ A:=\lim_{r\rightarrow 2m}h^{\frac12}u_{r} \neq 0$.
We claim that at $r=2m$,
\be\label{partialrho}
   \la \nabla_{e_i}e_j, \partial_{\rho}\ra=0,~ ~ i,j=2,3.
\ee
In fact, denote $(x^{0},x^{1},x^{2},x^{3})=(t,r,\theta, \phi)$, then direct computations give
\bee
\Gamma_{00}^1=\frac{h h'}{2};
\Gamma_{11}^1= -\frac{h'}{2h};
\Gamma_{22}^1=-rh;
\Gamma_{33}^1=-rh\sin^{2}\theta ; \Gamma_{ab}^1=0, a\neq b, 0\leq a,b\leq 3.
 \eee
Hence, at $r=2m$,
\begin{align*}
&\la \nabla_{e_2}e_2,  \partial_{\rho} \ra
\\=&h^{\frac{1}{2}}\left(u_\theta^{2}\la \nabla_{\partial _t}\partial _t, \partial_{r}\ra+ u_\theta \la \nabla_{\partial _\theta}\partial _t, \partial_{r}\ra+ u_\theta \la \nabla_{\partial _t}\partial _\theta, \partial_{r}\ra+  \la \nabla_{\partial _\theta}\partial _\theta, \partial_{r}\ra\right)
\\=&h^{\frac{1}{2}} (u_\theta^{2}  \frac{h h'}{2} -rh ) = u_\theta^{2}mr^{-2}h^{\frac{3}{2}} -rh^{\frac{3}{2}}
=0 ,
\end{align*}
\begin{align*}
&\la \nabla_{e_2}e_3,  \partial_{\rho} \ra
\\=&h^{\frac{1}{2}}\left(u_\theta u_\phi \la \nabla_{\partial _t}\partial _t, \partial_{r}\ra+ u_\phi \la \nabla_{\partial _\theta}\partial _t, \partial_{r}\ra+ u_\theta \la \nabla_{\partial _t}\partial _\phi, \partial_{r}\ra+  \la \nabla_{\partial _\theta}\partial _\phi, \partial_{r}\ra\right)
\\=&h^{\frac{1}{2}}(u_\theta u_\phi \frac{h h'}{2}) = u_\theta u_\phi mr^{-2}h^{\frac{3}{2}}
 = 0,
\end{align*}
\begin{align*}
&\la \nabla_{e_3}e_3,  \partial_{\rho} \ra
\\=&h^{\frac{1}{2}}\left(u_\phi^{2}\la \nabla_{\partial _t}\partial _t, \partial_{r}\ra+ u_\phi \la \nabla_{\partial _\phi}\partial _t, \partial_{r}\ra+ u_\phi \la \nabla_{\partial _t}\partial _\phi, \partial_{r}\ra+  \la \nabla_{\partial _\phi}\partial _\phi, \partial_{r}\ra\right)
\\=&h^{\frac{1}{2}}(u_\phi^{2}  \frac{h h'}{2} -rh\sin^{2}\theta) = u_\phi^{2}mr^{-2}h^{\frac{3}{2}} -r\sin^{2}\theta h^{\frac{3}{2}}
 = 0 .
\end{align*}


Note that the vector field $e_1$ is smooth in the hypersurface $\Sigma$, let
\be\label{v1}
v_1=\lim_{r\rightarrow 2m}e_1=A\partial_t + \partial_\rho ,
\ee
then by the assumption, we see that $v_1$ is normal vector of $\p \Sigma$ in $\Sigma$. On the other hand, the level surface of $t$ in $ \mathbb{X}^{3,1}$ is totally geodesic, hence, we have (when we do covariant derivatives, we always extend $u$ trivially in a neighborhood of $\p \Sigma$ in the hypersurface $\Sigma$)
\be\label{v2}
\la \nabla_{e_i}e_j, \partial_t \ra=0.
\ee
Combining with \eqref{partialrho}, \eqref{v1} and \eqref{v2}, we get for $i,j=2,3$,
\bee
\la \nabla_{e_i}e_j,v_1\ra=0
\eee
which implies $\p \Sigma$ is totally geodesic in $\Sigma$. Together with Theorem\ref{t-blackhole}, we know the last conclusion is true.
\end{proof}


\begin{thebibliography}{1000}
\bibitem{AnderssonIriondo1999} Andersson, L.  ; Iriondo, M. S., {\sl
Existence of constant mean curvature hypersurfaces in asymptotically flat spacetimes},
Ann. Global Anal. Geom. \textbf{17} (1999), no. 6, 503--538.

\bibitem{ChoiTreibergs1990} Choi, H. I.; Treibergs, A., {\sl   Gauss maps of spacelike constant mean curvature hypersurfaces of Minkowski space}, J. Differential Geom. \textbf{32} (1990), no. 3, 775--817.

\bibitem{Choquet-Bruhat1976} Choquet-Bruhat, Y., {\sl
Maximal submanifolds and submanifolds with constant mean extrinsic curvature of a Lorentzian manifold},
Ann. Scuola Norm. Sup. Pisa Cl. Sci. (4) \textbf{3} (1976), no. 3, 361--376.

\bibitem{ChruscielHerzlich}
  Chru\'sciel, P. T.; Herzlich, M.,
{\sl The mass of asymptotically hyperbolic Riemannian manifolds}, Pacific J. Math. {\bf 212} (2003), no. 2, 231--264.


\bibitem{Goddard1977} Goddard, A. J.,{\sl Some remarks on the existence of spacelike hypersurfaces of constant mean curvature},
Math. Proc. Cambridge Philos. Soc. \textbf{82} (1977), no. 3, 489--495.




\bibitem{LL} Lee K-W; Lee Y-I, {\sl Spacelike spherically symmetric CMC foliation in the extended Schwarzschild spacetime},  Ann. Henri Poincar\'{e} \textbf{17} (2016), no. 6, 1477--1503.

\bibitem{LL2} Lee K-W; Lee Y-I, {\sl Spacelike spherically symmetric CMC hypersurfaces in Schwarzschild spacetimes (I): Construction}, arXiv:1111.2679v2, 2011.

\bibitem{Treibergs} Treibergs, A. E., {\sl Entire spacelike hypersurfaces of constant mean curvature in Minkowski space}, Invent. Math. \textbf{66} (1982), no. 1, 39--56.

\bibitem{Wang2001}  Wang, X. D., {\sl The mass of asymptotically hyperbolic manifolds}, J. Differential Geom. \textbf{57} (2001), no. 2, 273--299.

\bibitem{Wald} Wald, R. M., {\sl General Relativity},  University of Chicago Press, 1984.






\end{thebibliography}
\end{document}